\active\gdef@{\mkern1mu}}
\newcolumntype{L}{>{$}l<{$}}
\newcolumntype{C}{>{$}c<{$}}
\newcolumntype{R}{>{$}r<{$}}
\theoremstyle{break}
\theoremstyle{nonumberplain}
\let\ccref\cref
\renewcommand{\cref}[1]{\mbox{\ccref{#1}}}
\crefname{appendix}{}{}
\crefname{listing}{Algorithm}{Algorithms}
\crefname{equation}{}{}
\crefname{table}{Table~}{Tables~}
\journal{}
 \newcommand{\D}{\mathrm{d}}
  \newcommand{\pd}[2]{\frac{\partial #1}{\partial #2}}
  \newcommand{\td}[2]{\frac{\mathrm d #1}{\mathrm d #2}}
\newcommand{\Q}{\mathbb{Q}}
\newcommand{\R}{\mathbb{R}}
\newcommand{\N}{\mathbb{N}}
\newcommand{\dx}{\mathrm{d}\mathbf{x}}
\newcommand{\ds}{\mathrm{d}\mathbf{s}}
\newcommand{\Ne}{\mathcal N_e}
\newcommand{\Neb}{\Ne^{\partial}}
\newcommand{\Nei}{\Ne^0}
\newcommand{\Be}{\mathcal B_e}
\newcommand{\beq}{\begin{equation}}
\newcommand{\eeq}{\end{equation}}
\newcommand{\ab}[2]{\mbox{#1.\,#2.,}~}
\newcommand{\eg}{\ab{e}{g}}
\newcommand{\etal}{\mbox{et al.\ }}
\newcommand{\ie}{, \ab{i}{e}}
\newcommand{\wrt}{\mbox{w.\,r.\,t.}~}
\newcommand{\IP}{\ensuremath{\mathbb{P}}}
\renewcommand{\l}{\ensuremath{\left(}}
\renewcommand{\r}{\ensuremath{\right)}}
\newcommand\red[1]{#1}
\newcommand\blue[1]{#1}
\begin{document}

\begin{frontmatter}



  \title{\blue{An element-based} convex limiting framework for continuous Galerkin methods with nonlinear stabilization}



\author{Dmitri Kuzmin\corref{cor1}}
\ead{kuzmin@math.uni-dortmund.de}

\cortext[cor1]{Corresponding author}

\author{Hennes Hajduk}
\ead{hennes.hajduk@math.tu-dortmund.de}

\author{Joshua Vedral}
\ead{joshua.vedral@math.tu-dortmund.de}

\address{Institute of Applied Mathematics (LS III), TU Dortmund University\\ Vogelpothsweg 87,
  D-44227 Dortmund, Germany}


\begin{abstract}
  We equip a high-order continuous Galerkin discretization of a general hyperbolic problem with a nonlinear stabilization term and introduce a new methodology for enforcing preservation of invariant domains. The amount of shock-capturing artificial viscosity is determined by a smoothness sensor that measures deviations from a weighted essentially nonoscillatory (WENO) reconstruction. Since this kind of dissipative stabilization does not guarantee that the nodal states of the finite element approximation stay in a convex admissible set, we adaptively constrain deviations of these states from intermediate cell averages. The representation of our scheme in terms of such cell averages makes it possible to apply convex limiting techniques originally designed for positivity-preserving discontinuous Galerkin (DG) methods. Adapting these techniques to the continuous Galerkin setting and using Bernstein polynomials as local basis functions, we prove the invariant domain preservation property under a time step restriction that can be significantly weakened by using a flux limiter for the auxiliary cell averages. The close relationship to DG-WENO schemes is exploited and discussed. All algorithmic steps can be implemented in a matrix-free and hardware-aware manner. The effectiveness of the new element-based limiting strategy is illustrated by numerical examples.
\end{abstract}


\begin{keyword}
hyperbolic problems\sep
invariant domain preservation\sep
high-order finite elements\sep  
WENO stabilization\sep
monolithic convex limiting
\end{keyword}

\end{frontmatter}


\section{Introduction}

Finite element methods based on continuous Galerkin (CG) approximations are rarely used for discretization of nonlinear hyperbolic problems. It is commonly believed that finite volume and discontinuous Galerkin (DG) methods are better suited for this purpose because of the ease with which they can be manipulated to satisfy physical and numerical admissibility conditions. In particular, the use of slope limiters makes it possible to enforce preservation of local and/or global bounds for scalar quantities of interest \cite{barth1989,kuzmin2010,zhang2011}.
Since slope limiting does not preserve global continuity, high-resolution finite element schemes of CG type are usually stabilized using nonlinear artificial viscosity \cite{guermond2018,guermond2011,hughes2010,johnson1990} or algebraic flux correction (AFC) tools \cite{kuzmin2023,kuzmin2012b}.

Many classical AFC schemes for low-order (linear or multilinear) continuous finite elements belong to the family of flux-corrected transport (FCT) methods \cite{kuzmin2010a,lohner1987}. In the scalar case, flux limiting can be performed using Zalesak's algorithm \cite{zalesak1979} or a localized limiter for individual edge or element contributions \cite{cotter2016,guermond2017,lohmann2017}. Both approaches can be generalized to systems in a manner that guarantees the validity of positivity constraints for quasi-concave scalar functions of conserved variables \cite{dobrev2018,guermond2018,lohmann2016}. Numerical schemes that provably satisfy such inequality constraints are called invariant domain preserving (IDP) \cite{guermond2016}
or, informally, positivity preserving \cite{zhang2011}.

The localized FCT method proposed by Guermond et al. \cite{guermond2018}
constrains forward Euler stages of an explicit strong stability preserving
Runge--Kutta (SSP-RK) method using \emph{convex limiting}. This terminology
reflects the fact that the nodal states of the flux-corrected CG
approximation represent convex combinations of admissible states.
The IDP property of the low-order (local
Lax--Friedrichs) method, and of the FCT extension, is guaranteed
under a CFL-like time step restriction. The \emph{monolithic}
convex limiting (MCL) procedure introduced in \cite{kuzmin2020} can
be combined with any time stepping method \cite{kuzmin2022a,moujaes2025,quezada-de-luna2022} and has been successfully extended to high-order
finite elements \cite{kuzmin2020f,kuzmin2020a}. 
Convex limiting of FCT or MCL type is also possible
for nodal DG methods using Bernstein or
{L}egendre--{G}auss--{L}obatto (LGL) finite elements of arbitrary order
\cite{hajduk2021,hajduk2025a,lin2024,pazner2021,rueda-ramirez2024}. However,
achieving optimal accuracy and performance
with such AFC schemes 
typically requires the use of compact-stencil representations,
subcell flux limiters, and local bounds depending on \emph{ad hoc}
smoothness indicators (as shown already in
\cite{hajduk2020,hajduk2020b,kuzmin2020a,lohmann2017}).

A powerful alternative framework for the design of positivity-preserving
high-order DG methods was developed by Zhang and Shu
\cite{zhang2010b,zhang2011,zhang2012}. In a typical implementation 
of their approach, which builds on the seminal paper \cite{perthame1996}
by Perthame and Shu,
spurious oscillations are suppressed by using a
weighted essentially nonoscillatory (WENO) reconstruction as a
substitute for an unacceptable DG approximation in a troubled
cell \cite{luo2007,qiu2005,shu2016,zhong2013,zhu2009,zhu2008,zhu2017}.
Positivity preservation can then be enforced using slope
or flux limiting. The Zhang--Shu slope limiter 
ensures positivity at the quadrature points corresponding
to a convex decomposition of the cell average into states that are
evolved using a one-dimensional IDP scheme \cite{kuzmin2023,zhang2011}.
Flux limiting makes it possible to constrain the
cell averages directly \cite{moe2017,xiong2016,xu2014}. In
particular, FCT and MCL algorithms can be used for this purpose 
\cite{guermond2019,kuzmin2021,kuzmin2023}.

\blue{The objective of this work is to show that the methodology
developed by Zhang and Shu can be adapted to continuous
finite element approximations.} The first step toward that end
has already been made in \cite{kuzmin2023a}, where we
designed nonlinear stabilization terms for CG discretizations
using shock detectors that depend on deviations from a Hermite
WENO reconstruction. In contrast to traditional WENO limiters,
our approach is suitable for baseline discretizations of CG type
and leads to analyzable nonlinear schemes \cite{vedral2025}. It
remains to design a limiter that ensures positivity preservation.
The convex limiting strategy that we propose below is
based on a representation of the CG approximation in terms of
constrained nodal states. An element-based slope limiter controls
deviations of these states from intermediate cell averages
that are positivity preserving under a restrictive CFL condition.
The optional use of a flux limiter enables us to guarantee
positivity preservation under the mild CFL condition of the
piecewise-constant DG-$\mathbb{P}_0$ approximation. Moreover,
all calculations can be performed in a matrix-free manner, i.e.,
without calculating the full element matrices of the high-order
finite element~space. 

In Section~\ref{sec:cgweno}, we discretize a generic hyperbolic
problem in space using Bernstein finite elements and
the CG-WENO method introduced in \cite{kuzmin2023a}.
As we show in Section~\ref{sec:low}, the stabilized baseline
scheme can be
decomposed into a low-order IDP approximation and a sum of
element contributions that
may require limiting. Representing the low-order part in terms of
intermediate cell averages, we formulate sufficient conditions
for explicit SSP-RK time discretizations to be IDP. 
Monolithic limiters for auxiliary cell averages and
auxiliary nodal states are presented in Section~\ref{sec:limiters}.
Finally, we perform numerical studies and draw conclusions 
in Sections \ref{sec:num} and \ref{sec:conclusions},
respectively.

\section{Stabilized CG discretization}
\label{sec:cgweno}

Let $u(\mathbf{x},t)$ denote a conserved quantity or a vector of conserved
variables depending on the space location $\mathbf{x}$ and time instant $t\ge 0$.
Restricting our attention to a bounded domain $\Omega\subset\R^d,\ d\in\{1,2,3\}$
and choosing a flux function $\mathbf{f}(u)$, we consider the
(possibly nonlinear) hyperbolic problem
\begin{subequations}\label{ibvp}
\begin{alignat}{3}
  \pd{u}{t}+\nabla\cdot\mathbf{f}(u)&=0 &&\quad\mbox{in}\ \Omega\times (0,T],
    \label{ibvp-pde}\\
    u(\cdot,0)&=u_0 &&\quad\mbox{in}\ \Omega,\label{ibvp-ic}\\
    \mathbf f(u)\cdot\mathbf n&=\mathcal F(u,\hat u;\mathbf n)
    &&\quad \mbox{on}\ \partial\Omega\times [0,T],
\end{alignat}
\end{subequations}
where $u_0$ is the initial data, $T>0$ is the final time, and $\hat u$ is
the data of a weakly imposed boundary condition. We denote by
$\mathcal F(u_L,u_R;\mathbf n)$ a boundary flux in the direction of the unit
outward normal  $\mathbf n$. The states $u_L$ and $u_R$ represent
the initial states of the corresponding Riemann problem \cite{kuzmin2023}.

An invariant domain of problem \eqref{ibvp} is a convex admissible
set $\mathcal G$ such that $u(\mathbf{x},t)\in\mathcal G$ for all
space-time locations $(\mathbf{x},t)$. In particular, this definition
implies that $u_0(\mathbf x)\in\mathcal G$ for all $\mathbf x\in\Omega$ 
and $\hat u(\mathbf{x},t)\in\mathcal G$ for all $(\mathbf x,t)\in\partial\Omega\times [0,T]$.

\subsection{Galerkin scheme using Bernstein finite elements}

To solve \eqref{ibvp} using a CG-type finite element method, we use a
conforming, possibly unstructured,
mesh $\mathcal T_h$ consisting of simplices or $d$-dimensional
boxes $K_e,\ e=1,\ldots,E_h$. For simplicity, we assume that $\bar\Omega
=\bigcup_{e=1}^{E_h}K_e$.
A~globally continuous approximation $u_h$ is
sought in a space $V_h$ that is spanned by nodal basis functions
$\varphi_1,\ldots,\varphi_{N_h}\in C(\bar\Omega)$ of polynomial degree $p\in \N$. The
corresponding nodal points are denoted by $\mathbf x_1,\ldots,\mathbf x_{N_h}$.
The basis functions $\varphi_i$ of the Bernstein polynomial basis
are nonnegative and form a partition of unity, i.e., $\sum_{i=1}^{N_h}\varphi_i\equiv 1$.
For a formal definition of $\varphi_i$ and a review of further
properties, we refer the reader to \cite{hajduk2021,kuzmin2020f,kuzmin2020a,lohmann2017}
and \cite[Sec. 6.1]{kuzmin2023}.

Adopting the Bernstein basis representation, we store the global indices
of nodes belonging to a macrocell $K_e$ in the integer
set $\Ne$ such that $$u_h|_{K_e}=\sum_{j\in\Ne}u_j\varphi_j,\qquad e=1,\ldots,E_h.$$
The indices of macrocells containing a nodal point
$\mathbf x_i$ are stored in the integer set $\mathcal E_i$. The
boundary $\partial K_e$ of $K_e$ consists of facets $S_{ee'}$. We define $\Be$ as the set of cell indices such that $S_{ee'}=\partial K_e\cap \partial K_{e'}$
for $e'\in\{1,\ldots,E_h\}\backslash\{e\}$. A~unique index $e'>E_h$ is
associated with each boundary facet $S_{ee'}\subset\partial\Omega$. The
set $\Be^\partial=\{e'\in\Be\,:\, e'>E_h\}$ contains the indices of such facets.
\red{The index notation that we use is summarized in Table~\ref{table:sets}.
\begin{table}[ht!]
\centering
\scriptsize\red{
\begin{tabular}{ll}
  $e\in\{1,\ldots,E_h\}$ & index of a mesh element $K_e$ belonging to $\mathcal T_h$ \\
  $e'\in\Be$ & index of a facet $\begin{cases}
  \mbox{
    $S_{ee'}=\partial K_e\cap \partial K_{e'}$  if $e'\le E_h$} \\
  \mbox{
    $S_{ee'}\subset\partial K_e\cap\partial\Omega$ if $e'>E_h$}
  \end{cases}$
  \\
  $e'\in\Be^\partial$ & index of a boundary facet $S_{ee'}$ with $e'> E_h$\\
  $i\in\{1,\ldots,N_h\}$ & index of a nodal point $\mathbf x_i$ or
  basis function $\varphi_i$ \\
  $i\in \Ne$ & index of a nodal point $\mathbf x_i\in K_e$  \\
  $i\in\Neb$ & index of a boundary node $\mathbf x_i\in\partial K_e$\\
  $i\in\Nei$ & index of an internal node $\mathbf x_i\in K_e\backslash\partial K_e$\\
  $e\in\mathcal E_i$ & index of a  mesh element $K_e$ containing $\mathbf x_i$\\
  $j\in\mathcal J_i$ & index of a point $\mathbf x_j\in K_e$ for some $e\in
  \mathcal E_i$
\end{tabular}}
\caption{\red{Index notation used in descriptions of numerical schemes.}}\label{table:sets}
\end{table}}

The standard CG approximation $u_h\in V_h$ to a weak solution of \eqref{ibvp}
satisfies 
\begin{align*}
\sum_{e=1}^{E_h}\int_{K_e}v_h\Big[\pd{u_h}{t}&+\nabla\cdot\mathbf{f}(u_h)\Big]\dx\\
&+\sum_{e=1}^{E_h}\int_{\partial K_e\cap\partial\Omega}v_h[\mathcal F(u_h,\hat u_h;\mathbf n)
  -\mathbf f(u_h)\cdot\mathbf n]\ds=0
\end{align*}
for any test function $v_h\in V_h$. Introducing the notation 
\begin{align}
  (v_h,u_h)&=\sum_{e=1}^{E_h}\int_{K_e}v_hu_h\dx,\\
  a(v_h,u_h)&=\sum_{e=1}^{E_h}\int_{K_e}v_h\nabla\cdot\mathbf{f}(u_h)\dx,\\
  b(v_h,u_h,\hat u_h)&=
  \sum_{e=1}^{E_h}\int_{\partial K_e\cap\partial\Omega}
  v_h[\mathcal F(u_h,\hat u_h;\mathbf n)-\mathbf f(u_h)\cdot\mathbf n]\ds,
\end{align}
we write the semi-discrete problem in the form
\beq\label{weak:galerkin}
\td{}{t}(v_h,u_h)+a(v_h,u_h)+b(v_h,u_h,\hat u_h)=0\qquad \forall v_h\in V_h.
\eeq
The use of $v_h\in\{\varphi_1,\ldots,\varphi_{N_h}\}$ in
\eqref{weak:galerkin} yields a system of semi-discrete nonlinear
equations for the nodal states $u_j(t)$ that define $u_h=\sum_{j=1}^{N_h}u_j
\varphi_j$. Note that, in contrast to Lagrange finite elements,
the identity $u_j(t)=u_h(\mathbf x_j,t)$ generally holds only if 
the nodal point $\mathbf x_j$ is a vertex of the mesh $\mathcal T_h$.

\subsection{Dissipative WENO stabilization}

To achieve optimal convergence and avoid spurious oscillations
within the global bounds of IDP  constraints, we add a nonlinear
stabilization term
$$s_h(v_h,u_h)=\sum_{e=1}^{E_h}s_h^e(v_h,u_h)$$
on the left-hand side of \eqref{weak:galerkin}. The
CG-WENO methods presented in \cite{kuzmin2023a,vedral2025}
blend the local bilinear forms of high- and low-order
stabilization operators using a smoothness sensor
$\gamma_e\in[0,1]$. The element contributions
\begin{align}
s_h^e(v_h,u_h)&=\nu_e\int_{K_e}
\nabla v_h\cdot(\nabla u_h-\gamma_e\mathbf{g}_h)\dx
\label{stab:WENO}
\end{align}
are defined using the consistent $L^2$ projection $\mathbf{g}_h$
of $\nabla u_h$ into the CG space. The 
viscosity parameter $\nu_e=\frac{\lambda_e h_e}{2p}$ depends on the
local mesh size $h_e$, polynomial degree $p$, and a local bound
$\lambda_e$ for the maximum speed. Details regarding the calculation of $\mathbf{g}_h$
and $\lambda_e$ can be found in \cite{kuzmin2020f,kuzmin2023a,lohmann2017,vedral2025}.

In formula \eqref{stab:WENO}, the smoothness sensor $\gamma_e$ acts as
a slope limiter for the projected gradient $\mathbf{g}_h$. In the case
$\gamma_e=0$, the local bilinear form \eqref{stab:WENO} introduces
low-order stabilization of local Lax--Friedrichs type. The setting
$\gamma_e=1$ corresponds to the linear high-order stabilization
employed in \cite{kuzmin2020f,lohmann2017}. The symmetry and
positive semi-definiteness of the corresponding bilinear form
can be shown as in \cite[Lem. 4.1]{olshanskii2025}. The
WENO shock detector
\begin{align}
  \gamma_e = 1 - \min\bigg(1,\frac{\|u_h^e-u_h^{e,*}\|_e}{\|u_h^e\|_e}\bigg)^q
  \label{WENO:alpha}
\end{align}
proposed in \cite{kuzmin2023a} measures deviations of $u_h^e=u_h|_{K_e}$ from a
WENO reconstruction $u_h^{e,*}$. The sensitivity of $\gamma_e$ to these deviations 
can be varied by adjusting the parameter $q\ge 1$. \red{As $q$ is increased, the resolution of steep gradients improves, but violations of maximum principles may occur / become more pronounced (see Section~\ref{sec:scalar2D}).} The Sobolev semi-norm $\|\cdot\|_e$ is defined
by
  \begin{align}
  \|v\|_{e}=\left(\sum_{1\leq|\mathbf{k}|\leq p}
  h_e^{2|\mathbf{k}|-d}\int_{K_e}|D^\mathbf{k}v|^2\dx
    \right)^{1/2}\qquad
  \forall v\in H^p(K_e),
  \label{WENO:norm}
\end{align}
where $\mathbf{k}=(k_1,\ldots,k_d)$ is the
  multiindex of the partial derivative
  $D^\mathbf{k}v$.
  
 Following the design of Hermite WENO limiters for DG methods
 \cite{luo2007,zhu2009,zhu2017},
 the reconstruction $u_h^{e,*}$ is defined as a convex
 combination of candidate polynomials $u_{h,l}^e,\ l=0,\ldots,n_e$ with
 nonlinear weights \cite{jiang1996}
\[
\omega_l^e=\frac{\tilde{w}_l^e}{\sum_{k=0}^{n_e}\tilde{w}_k^e}, \qquad
\tilde{w}_l^e = \frac{w_l^{e,\rm lin}}{\|u_{h,l}^e\|_e^r+\epsilon}
\]
that depend on the choice of the linear weights $w_l^{e,\rm lin}$.
The small parameter $\epsilon >0$
is used to avoid division by zero. \red{
The exponent $r\ge 1$ determines the sensitivity of the nonlinear weights to the smoothness indicators. Larger values of the parameter $r$ increase the difference between the weights of smooth and nonsmooth candidates but can degrade the accuracy in smooth regions. Liu et al.~\cite{liu1994} suggest setting $r$ equal to the polynomial degree of the WENO reconstruction. Jiang and Shu \cite{jiang1996} found that $r=2$ achieves the best balance between robustness and accuracy. This is currently the default choice in classical WENO schemes. In our implementation, the} candidate
polynomials $u_{h,l}^e$ are constructed using Hermite interpolation
from cells $K_{e'}$ sharing a facet $S_{ee'}$ with $K_e$.
For details, we refer the reader to \cite{kuzmin2023a,vedral2025}.

\red{The design of the local stabilization term
 \eqref{stab:WENO} is motivated by the fact that deviations of $u_h^e$ from
 $u_h^{e,*}$ are large in regions where $u_h$ is not smooth. In these regions, the value of the WENO sensor $\gamma_e$ is small, and \eqref{stab:WENO} introduces large amounts of low-order numerical viscosity.
 In smooth regions, the value of $\gamma_e$ is close to unity. The high-order stabilization corresponding to $\gamma_e=1$ ensures optimal convergence to smooth exact solutions \cite{kuzmin2020f,lohmann2017}.}

The use of the test function $v_h=\varphi_i$ in the stabilized WENO version
\beq\label{weak:stab}
\td{}{t}(v_h,u_h)+a(v_h,u_h)+
b(v_h,u_h,\hat u_h)+s_h(v_h,u_h)=0\qquad \forall v_h\in V_h
\eeq
of the spatial semi-discretization \eqref{weak:galerkin} yields the evolution equation
\beq\label{weak:stab2}
(\varphi_i,\dot u_h)+a(\varphi_i,u_h)+
  b(\varphi_i,u_h,\hat u_h)+s_h(\varphi_i,u_h)=0,
\eeq
where $\dot u_h=\sum_{j=1}^{N_h}\dot u_j\varphi_j$ denotes the time derivative of $\dot u_h$.
  Discretization in time can be performed, e.g., using an explicit SSP-RK method.

\section{Low-order IDP discretization}
\label{sec:low}

In many cases, the results produced by the fully discrete version of the 
CG-WENO scheme \eqref{weak:stab} are excellent \cite{kuzmin2023a,vedral2025}.
However, there is no guarantee that the nodal states $u_j$ of the Bernstein
finite element approximation are positivity preserving / IDP. To show that the
IDP property can be enforced using monolithic convex limiting, we first
approximate $\mathbf f(u_h)$ by
\beq\label{gfe}
\mathbf f_h=\sum_{j=1}^{N_h}\mathbf f(u_j)\varphi_j
\eeq
and define the low-order counterpart
\beq\label{weak:low1}
m_i\td{u_i}{t}+a_i(u_h)+s_i(u_h)+b_i(u_h,\hat u_h)=0
\eeq
of the semi-discrete equation \eqref{weak:stab2} using (cf. \cite[Ch. 3 and 6]{kuzmin2023}, \cite{kuzmin2025a})
\begin{align}
  m_i&=\sum_{e\in\mathcal E_i}m_i^e,\qquad m_i^e=\int_{K_e}\varphi_i\dx,\\
  a_i(u_h)&=\sum_{e\in\mathcal E_i}\frac{m_i^e}{|K_e|}\int_{K_e}\nabla\cdot\mathbf{f}_h\dx
  =\sum_{e\in\mathcal E_i}\frac{m_i^e}{|K_e|}\sum_{e'\in\Be}\int_{S_{ee'}}\mathbf{f}_h\cdot\mathbf n\ds,\\
  s_i(u_h)&=\sum_{e\in\mathcal E_i}s_i^e(u_h),\qquad
  s_i^e(u_h)=\frac{m_i^e}{\Delta t_e}(u_i-u^e),\label{rusdiss}\\
b_i(u_h,\hat u_h)&=
\sum_{e\in\mathcal E_i}\sum_{e'\in\Be^{\partial}}\sigma_{i,ee'}
  [\mathcal F(u_i,\hat u_{i,ee'};\mathbf n)-\mathbf f(u_i)\cdot \mathbf{n}],
  \label{bidef}\\
  \sigma_{i,ee'}&=\int_{S_{ee'}}\varphi_i\ds,\qquad
\hat u_{i,ee'}
=
\frac{1}{\sigma_{i,ee'}}\int_{S_{ee'}}\varphi_i\hat u_h\ds.
\label{sigmadef}
\end{align}
The stabilization term $s_i(u_h)$ introduces low-order
\emph{Rusanov dissipation} (cf. \cite{abgrall2006},
\cite[Ch. 4]{kuzmin2023}), the levels of which are
inversely proportional to a `fake' time step~$\Delta t_e$.
An IDP upper bound for $\Delta t_e$ is derived below.
The nonlinear term $a_i(u_h)$ of the evolution equation \eqref{weak:low1}
approximates $a(\varphi_i,u_h)$ using 
\eqref{gfe} and inexact quadrature
for the integral $\int_{K_e}\varphi_i\nabla\cdot\mathbf f_h\dx$.
The approximation of $(\varphi_i,u_h)$ by $m_iu_i$ corresponds to mass lumping, which 
is equivalent to using inexact quadrature for $L^2$ scalar products.
For Bernstein finite elements
$m_i^e=\frac{|K_e|}{|\Ne|}$, where $|\Ne|$ is the cardinality
of the index set $\Ne$ and $|K_e|$ is the $d$-dimensional volume of $K_e$.
Similar mass lumping is performed in the boundary term~\eqref{bidef}, in
which we use
the local Lax--Friedrichs (LLF) flux
\beq\label{fluxLLF}
\mathcal F(u_L,u_R;\mathbf n)=\frac{\mathbf f(u_L)+\mathbf f(u_R)}2\cdot\mathbf n-
\frac{\lambda_{LR}}2(u_R-u_L)
\eeq
depending on the maximum speed $\lambda_{LR}$ of the corresponding Riemann problem. 

\subsection{Intermediate cell averages}

Let us first analyze the low-order scheme \eqref{weak:low1} under the simplifying assumption of periodic boundary conditions. Since $b_i(u_h,\hat u_h)=0$ in the
periodic case, the ordinary differential equation \eqref{weak:low1} can
be written as
\beq\label{weak:low2}
m_i\td{u_i}{t}=\sum_{e\in\mathcal E_i}m_i^e\frac{\bar u^e-u_i}{\Delta t_e},
  \eeq
  where $\Delta t_e$ is the \red{pseudo} time step of the Rusanov dissipation
  \eqref{rusdiss} and
\beq\label{uebar}
\bar u^e=u^e-\frac{\Delta t_e}{|K_e|}
\sum_{e'\in\Be}\int_{S_{ee'}}\mathbf f_h\cdot\mathbf n\ds
\eeq
is an intermediate cell average depending on the old cell average
\beq\label{ueold}
u^e=\frac{1}{|K_e|}\int_{K_e}u_h\dx=\frac{1}{|K_e|}\sum_{i\in\Ne}m_i^eu_i.
\eeq

If \eqref{weak:low2} is discretized in time using an explicit SSP-RK method,
then each intermediate stage is a forward Euler update of the form
\beq\label{fe-update}
m_iu_i^{\rm SSP}=\sum_{e\in\mathcal E_i}m_i^e\left[\left(1-\frac{\Delta t}{\Delta t_e}\right)u_i
  +\left(\frac{\Delta t}{\Delta t_e}\right)\bar u^e\right],
\eeq
where $\Delta t>0$ is a global time step such that
\beq\label{cflglob}
\Delta t\le \min_{1\le e\le E_h}\Delta t_e.
\eeq
Under this condition, \eqref{fe-update} yields a convex
combination $u_i^{\rm SSP}$ of the states $u_i$ and $\bar u^e,\ e\in\mathcal E_i$.
If these states belong to a convex admissible set $\mathcal G$, then so
does  $u_i^{\rm SSP}$ and, therefore, the SSP-RK update is IDP w.r.t.
$\mathcal G$.

\red{
\subsection{Analysis in the one-dimensional case}
  
For an element $K_e$ of a 1D mesh with $\Ne=\{i,j\}$
and local spacing
$h_e=|K_e|$,
the intermediate state $\bar u^e$ defined by \eqref{uebar} reduces to
  \beq\label{ubar1D}
  \bar u_{ij}=\frac{u_i+u_j}2-(j-i)
  \frac{\Delta t_e}{h_e}[f(u_j)-f(u_i)]
  =\bar u_{ji}.
  \eeq 
  Let $\lambda_{ij}^{\max}$ denote the maximum speed of the 1D Riemann
  problem with the initial states $u_i$ and $u_j$. If the
  pseudo time step $\Delta t_e$ satisfies 
 \beq\label{cfl1D}
\frac{\lambda_{ij}^{\max}\Delta t_e}{h_e}\le\frac12,
  \eeq
  then the \emph{bar state} $\bar u_{ij}$ defined by \eqref{ubar1D} corresponds to a spatially averaged exact solution to the Riemann problem \cite{guermond2016,harten1983a}. Thus, $u_i,u_j\in\mathcal G$ implies $\bar u_{ij}\in
  \mathcal G$ for every invariant domain $\mathcal G$ of the hyperbolic
  problem.

The 1D version of
\eqref{fe-update} can be rewritten in terms of $\bar u_{ij}$ as follows:
\beq\label{festepbar}
u_i^{\rm SSP}=\Big(1-\frac{\Delta t}{m_i}\sum_{j\in\mathcal J_i\backslash\{i\}}
2d_{ij}\Big)u_i+\frac{\Delta t}{m_i}\sum_{j\in\mathcal J_i\backslash\{i\}}
2d_{ij}\bar u_{ij}.
\eeq
Here $\mathcal J_i= \mathcal N_{e'}\cup \mathcal N_{e}$ for
$\mathcal N_{e'}=\{i-1,i\}$ and $\mathcal N_e=\{i,i+1\}$.
The auxiliary coefficients $d_{i,i-1}=m_i^{e'}/(2\Delta t_{e'})$ and $d_{i,i+1}=m_i^{e}/(2\Delta t_{e})$ are
positive.

\begin{lemma}\label{lem:IDP1D}
  Let $\mathcal G$ be a convex invariant domain. Suppose that 
  \beq\label{idpcrit1d}
  u_i,u_j\in\mathcal G
  \quad\Rightarrow\quad \bar u_{ij}\in\mathcal G
\eeq
and the time step $\Delta t$ satisfies the CFL-like condition
$$\frac{\Delta t}{m_i}\sum_{j\in\mathcal J_i\backslash\{i\}}
2d_{ij}\le 1.$$
Then the forward Euler step \eqref{festepbar} is IDP in the sense that
$$
u_j\in\mathcal G\ \forall j\in  \mathcal J_i
\quad\Rightarrow\quad u_i^{\rm SSP}\in\mathcal G.
$$
  \end{lemma}
\begin{proof}
  Under the assumptions of the lemma, $u_i^{\rm SSP}$ is a convex
  combination of $u_i\in\mathcal G$ and $\bar u_{ij}\in\mathcal G,\ j\in
  \mathcal J_i\backslash\{i\}$.
  It follows that $u_i^{\rm SSP}\in\mathcal G$.
\end{proof}  

The assumption \eqref{idpcrit1d} of the above lemma is met for $\bar u_{ij}$
of the form \eqref{ubar1D} if the local CFL condition $\eqref{cfl1D}$
holds for the pseudo time step $\Delta t_e$.

\subsection{Analysis in the multidimensional case}

Guermond and Popov \cite{guermond2016} found that a representation in
the form \eqref{festepbar} is also possible for a multidimensional finite
element counterpart of the local Lax--Friedrichs method. This low-order
IDP scheme uses the coefficients $d_{ij}$ of a graph Laplacian operator for
algebraic stabilization purposes. High-order finite element approximations
can also be written in the form \eqref{festepbar} with $\bar u_{ij}$ replaced by
$\bar u_{ij}^H=\bar u_{ij}+f_{ij}/(2d_{ij})$. The raw antidiffusive fluxes $f_{ij}$
can then be limited in a way that ensures the IDP property.
The monolithic convex limiting strategy introduced in \cite{kuzmin2020} 
produces the best flux approximation
$f_{ij}^*\approx f_{ij}$ such that $\bar u_{ij}^*=\bar u_{ij}+f_{ij}^*/(2d_{ij})
\in\mathcal G$ whenever $u_j\in\mathcal G\ \forall j\in\mathcal J_i$.

In the aforementioned extensions of \eqref{festepbar}, the bar states $\bar u_{ij}$
and their flux-corrected counterparts $\bar u_{ij}^*$ are associated with pairs of neighbor nodes
or cells. Multidimensional intermediate states $\bar u^e$ of our low-order method \eqref{fe-update}
are associated with elements and cannot be interpreted as
averaged solutions of one-dimensional Riemann problems. Indeed, formula
\eqref{uebar} resembles an explicit update for a cell average of a DG
approximation. Such updates are genuinely multidimensional and more
natural in the finite element setting. The representation of the
SSP-RK stage \eqref{fe-update} in terms of the intermediate cell
averages $\bar u^e$ reveals structural similarity to
positivity-preserving DG schemes \cite{zhang2010b,zhang2011,zhang2012}
and makes it possible to handle CG discretizations similarly.

In contrast to the edge-based generalization \cite{guermond2016} of \eqref{festepbar}, the element-based
version \eqref{fe-update} averages over elements with indices
$e\in\mathcal E_i$ rather than nodes with indices $j\in\mathcal J_i\backslash\{i\}$.
The difference between the edge-based and element-based convex decompositions
on a triangular mesh is illustrated in Fig.~\ref{decomp}.}

\begin{figure}[h!]\centering
\begin{subfigure}[b]{0.4\textwidth}
\caption{\red{edge-based}}
\includegraphics[width=\textwidth]{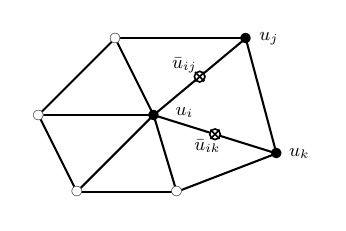}
\end{subfigure}
\begin{subfigure}[b]{0.4\textwidth}
\caption{\red{element-based}}
\includegraphics[width=\textwidth]{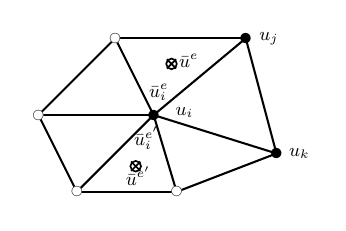}
\end{subfigure}
\caption{\red{Intermediate states of convex decompositions on a triangular mesh.}}
\label{decomp}
\end{figure}

It turns out that the IDP property of $\bar u^e$ is guaranteed if
$\Delta t_e$ is sufficiently small. We state and prove this result in the following lemma.
\begin{lemma}\label{lemma1}
  {\it
  Let $\bar u^e$ be the intermediate cell average defined by \eqref{uebar}. Then
  \beq\label{idpcrit}
u_i,u_j\in\mathcal G\quad \forall j\in\Ne
\quad\Rightarrow\quad \bar u^e\in\mathcal G
\eeq
under a subcell CFL condition of the form
  \beq\label{cflsubcell}
\Delta t_e\le \Delta t_e^{\max}
\eeq
with a computable upper bound $\Delta t_e^{\max}$ that
depends on the local maximum speed and the type of the
finite element approximation on $K_e$.}
\end{lemma}

\red{
\begin{proof}
  Adapting the proof technique employed in \cite[Thm. 5.15]{kuzmin2023}, we will express
  $\bar u^e$ as a convex combination of states that are evolved using the one-dimensional
  LLF scheme and possess the IDP property by Lemma \ref{lem:IDP1D}.

    To begin, we split the index set $\Ne$ into $\Neb=\{i\in\Ne\,:\,\mathbf x_i\in\partial K_e\}$ and
    $\Nei=\Ne\backslash\Neb$. In the
     case $\Nei\ne\emptyset$, we introduce the auxiliary state
\beq
    u_0^e=\frac{1}{m_0^e} \sum_{i\in\Nei}m_i^eu_i,\qquad
    m_0^e=\sum_{i\in\Nei}m_i^e
    \eeq
and represent the old average $u^e$ defined by \eqref{ueold}
as follows:
\beq\label{ueconnvex}
u^e=\frac{m_0^e}{|K_e|}u_0^e+\sum_{i\in \Neb}\frac{m_i^e}{|K_e|}u_i.
\eeq
Recalling the definition \eqref{gfe} of the flux approximation $\mathbf f_h$, we find that
$$
\sum_{e\in\Be}\int_{S_{ee'}}\mathbf f_h\cdot\mathbf n\ds
=\sum_{i\in\Neb}\mathbf f(u_i)\cdot\mathbf{c}_{i,e},
\qquad \mathbf c_{i,e}=\int_{\partial K_e}\varphi_i\mathbf n\ds.
$$
The intermediate cell average \eqref{uebar} admits the convex decomposition
\beq\label{uebarconnvex}
\bar u^e=\frac{m_0^e}{|K_e|}\bar u_0^e+\sum_{i\in \Neb}\frac{m_i^e}{|K_e|}\bar u_i^e,
\eeq
where
\begin{align}
\bar u_i^e&=u_i
-\frac{\Delta t_e}{m_i^e}|\mathbf c_{i,e}|(\mathcal F(u_i,u_i;\mathbf n_{i,e})
-\mathcal F(u_0^e,u_i;\mathbf n_{i,e})),\quad i\in\Neb, \label{uebdr1}\\
\bar u_0^e&=
u_0^e-\frac{\Delta t_e}{m_0^e}
 \sum_{i\in\Ne^\partial}|\mathbf c_{i,e}|
 (\mathcal F(u_0^e,u_i;\mathbf n_{i,e})-\mathcal F(u_0^e,u_0^e;\mathbf n_{i,e}))
 \label{uebdr2}
\end{align}
are intermediate states 
defined using the LLF flux \eqref{fluxLLF} with the normal direction
$\mathbf n_{i,e}=
\frac{\mathbf c_{i,e}}{|\mathbf c_{i,e}|}$. Note that
$\mathcal F(u_i,u_i;\mathbf n)=\mathbf f(u_i)\cdot\mathbf n$ and
\begin{align*}
\mathcal F(u_i,u_i;\mathbf n_{i,e})
-\mathcal F(u_0^e,u_i;\mathbf n_{i,e})&=
\frac{\mathbf f(u_i)-\mathbf f(u_0^e)}2\cdot\mathbf n_{i,e}
+\lambda_{0i}^e\frac{u_i-u_0^e}2,
\\
\mathcal F(u_0^e,u_i;\mathbf n_{i,e})-\mathcal F(u_0^e,u_0^e;\mathbf n_{i,e})
&=
\frac{\mathbf f(u_i)-\mathbf f(u_0^e)}2\cdot\mathbf n_{i,e}
-\lambda_{0i}^e\frac{u_i-u_0^e}2.
\end{align*}
Introducing the one-dimensional IDP bar states
(cf. \cite{guermond2016,hoff1979})
$$
\bar u_{0i}^e=\frac{u_i+u_0^e}2-
\frac{\mathbf f(u_i)-\mathbf f(u_0^e)}{2\lambda_{0i}^e}
\cdot\mathbf n_{i,e}
$$
such that
$$
\frac{\mathbf f(u_i)-\mathbf f(u_0^e)}{2}\cdot\mathbf n_{i,e} 
=\lambda_{0i}^e\left(\frac{u_i+u_0^e}2-\bar u_{0i}^e\right),
$$
we express relations \eqref{uebdr1} and \eqref{uebdr2} as follows (cf. \cite{guermond2016}):
\begin{align}
  \bar u_i^e&=\blue{\Big(1-\frac{\Delta t_e}{m_i^e}
  |\mathbf c_{i,e}|\lambda_{0i}^e\Big)u_i+\frac{\Delta t_e}{m_i^e}
  |\mathbf c_{i,e}|\lambda_{0i}^e\bar u_{0i}^e,\quad i\in\Neb,}\label{uebdr1bar}\\
\bar u_0^e&=\blue{\Big(1-
\frac{\Delta t_e}{m_0^e}
\sum_{i\in\Ne^\partial}|\mathbf c_{i,e}|\lambda_{0i}^e\Big)u_0^e
+\frac{\Delta t_e}{m_0^e}
\sum_{i\in\Ne^\partial}|\mathbf c_{i,e}|\lambda_{0i}^e
\bar u_{0i}^e.} \label{uebdr2bar}
\end{align}
\blue{Note that these updates are of the form \eqref{festepbar} and Lemma \ref{lem:IDP1D}
is applicable.} Let
\beq\label{dtmaxsubcell}
\Delta t_e^{\max}=\frac{1}{s}\min\left\{
\min_{i\in\Neb}\frac{m_i^e}{|\mathbf c_{i,e}|\lambda_{0i}^e},
\frac{m_0^e}{\sum_{i\in\Neb}|\mathbf c_{i,e}|\lambda_{0i}^e}
\right\},
\eeq
where $s\ge 1$. If the upper bound $\Delta t_e^{\max}$ of
condition \eqref{cflsubcell} is given by \eqref{dtmaxsubcell} with $s=1$,
then \blue{the states
$\bar u_i^e,\ i\in\Neb$ and $\bar u_0^e$ are convex combinations of the
IDP states that appear on the right-hand sides of
\eqref{uebdr1bar}
and \eqref{uebdr2bar}, respectively. This proves the claimed IDP property of
the convex combination \eqref{uebarconnvex}.} 

If $\Nei=\emptyset$, which is the case for
$\mathbb P_1/\mathbb Q_1$ and triangular $\mathbb P_2$ elements, we
use $u_0^e=u^e$ and the convex decompositions (cf. \cite[Thm. 5.15]{kuzmin2023})
$$
u^e=\frac{1}{2}\left[u_0^e+\sum_{i\in \Ne}\frac{m_i^e}{|K_e|}u_i\right],\qquad
\bar u^e=\frac{1}{2}\left[\bar u_0^e+\sum_{i\in \Ne}\frac{m_i^e}{|K_e|}\bar u_i^e\right].
$$
The states $\bar u_i^e$ and $\bar u_0^e$ are given by \eqref{uebdr1bar}
and \eqref{uebdr2bar} with $\Delta t_e$ replaced by $2\Delta t_e$. It follows that the
IDP property of $\bar u^e$ is guaranteed for $\Delta t_e\le\Delta t_e^{\max}$,
where $\Delta t_e^{\max}$ is the subcell CFL bound given by \eqref{dtmaxsubcell}
with $s=2$.
\end{proof}  
}

The proof of the lemma can be extended to the case of
non-periodic boundary conditions. If the boundary
term $b_i(u_h,\hat u_h)$ defined by \eqref{bidef} is
included on the right-hand side of \eqref{weak:low2},
a generalization of \eqref{uebdr1} and \eqref{uebdr2}
needs to be
analyzed. Let $\mathcal B_{i,e}$ be the subset of $\Be$
such that 
$\varphi_i$ is not identically zero on 
$S_{ee'}$ with $e'\in\mathcal B_{i,e}$. Introducing
the additional notation
$$
\mathbf c_{i,ee'}=\int_{S_{ee'}}\varphi_i\mathbf n\ds,
\quad\mathbf n_{i,ee'}=\frac{\mathbf c_{i,ee'}}{|\mathbf c_{i,ee'}|},
\qquad u_{i,ee'}=\begin{cases}
u_{i} & \mbox{if}\ e'\le E_h,\\
\hat u_{i,ee'} & \mbox{if}\ e'>E_h
\end{cases}
$$
and assuming that $\Ne^0\ne\emptyset$ for simplicity,
we arrive at \eqref{uebarconnvex} with
\begin{align*}
\bar u_i^e&=u_i
-\frac{\Delta t_e}{m_i^e}\sum_{e'\in\mathcal B_{i,e}}
|\mathbf c_{i,ee'}|(\mathcal F(u_i,u_{i,ee'};\mathbf n_{i,ee'})
-\mathcal F(u_0^e,u_i;\mathbf n_{i,ee'})),\quad i\in\Neb,\\
\bar u_0^e&=
u_0^e-\frac{\Delta t_e}{m_0^e}
 \sum_{i\in\Ne^\partial}\sum_{e'\in\mathcal B_{i,e}}|\mathbf c_{i,ee'}|
 (\mathcal F(u_0^e,u_i;\mathbf n_{i,ee'})-\mathcal F(u_0^e,u_0^e;\mathbf n_{i,ee'})).
\end{align*}
The CFL bounds for these LLF updates can again be derived
as in \cite{guermond2016,kuzmin2023} using representations in terms
of $\bar u_{0i}^e$ and $\bar u_{i,ee'}^e$ with $e'>E_h$. The latter bar state
is defined as in \eqref{ubar1D} but using the Riemann data 
$\hat u_{i,ee'}$ in place of $u_j$. We leave
the calculation of $\Delta t_e^{\max}$ as an exercise to the
reader. The flux-limited version of our method will be IDP under
a simpler and weaker CFL condition.
\medskip

An important practical implication of Lemma~\ref{lemma1} is the
following theorem.
\begin{theorem}\label{theorem1}
  {\it Suppose that the global time step $\Delta t$ satisfies
    condition \eqref{cflglob} and all local time steps $\Delta t_e$
    satisfy \eqref{cflsubcell} with $\Delta t_e^{\max}$
    defined in the proof of Lemma~\ref{lemma1}. Then the fully discrete
    scheme \eqref{fe-update} is IDP for $i=1,\ldots,N_h$.}
  \end{theorem}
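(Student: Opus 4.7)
The plan is to exhibit $u_i^{\rm SSP}$ as a convex combination of states already known to lie in $\mathcal G$, so that convexity of $\mathcal G$ closes the argument. Concretely, I would proceed by induction on the SSP-RK stages: for each forward Euler substep of the form \eqref{fe-update}, I assume that the input nodal states $u_j$ all belong to $\mathcal G$ and show that the output states do as well. Since an explicit SSP-RK update is a convex combination of such forward Euler stages, the IDP property then propagates through the full time step.

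For a fixed index $i$, I would first invoke Lemma~\ref{lemma1} applied to each macrocell $K_e$ with $e\in\mathcal E_i$. Because each local fake time step $\Delta t_e$ satisfies \eqref{cflsubcell} with the bound $\Delta t_e^{\max}$ from the proof of the lemma, and because all participating nodal states $u_j,\ j\in\Ne$, lie in $\mathcal G$ by the inductive hypothesis, we obtain $\bar u^e\in\mathcal G$ for every $e\in\mathcal E_i$.

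Next, I would rewrite \eqref{fe-update} by dividing through by $m_i=\sum_{e\in\mathcal E_i}m_i^e$, so that
\[
u_i^{\rm SSP}=\sum_{e\in\mathcal E_i}\frac{m_i^e}{m_i}\left[\left(1-\frac{\Delta t}{\Delta t_e}\right)u_i+\frac{\Delta t}{\Delta t_e}\bar u^e\right].
\]
The outer weights $m_i^e/m_i$ are nonnegative and sum to one by the definition of $m_i$. The inner weights $1-\Delta t/\Delta t_e$ and $\Delta t/\Delta t_e$ are nonnegative precisely because the global CFL condition \eqref{cflglob} ensures $\Delta t\le \Delta t_e$ for each $e\in\mathcal E_i$, and they evidently sum to one. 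Hence $u_i^{\rm SSP}$ is a double convex combination of elements of $\mathcal G$ (namely $u_i$ and the $\bar u^e$), and convexity of $\mathcal G$ gives $u_i^{\rm SSP}\in\mathcal G$. Since $i$ was arbitrary, IDP holds for every $i=1,\ldots,N_h$.

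There is no real obstacle here: the theorem is essentially a bookkeeping corollary of Lemma~\ref{lemma1}. The only points requiring care are verifying that the coefficients in the rewritten form of \eqref{fe-update} indeed constitute a convex combination (which is where \eqref{cflglob} enters) and, for the non-periodic case, checking that Lemma~\ref{lemma1} applies with the generalized bar-state formulas sketched after its proof — so that boundary contributions through $b_i(u_h,\hat u_h)$ do not break the convex-combination structure. Extension to the full SSP-RK scheme is then automatic from the convex-combination property of SSP time discretizations.
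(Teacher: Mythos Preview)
Your argument is correct and matches the paper's approach exactly: the paper's proof consists of a single sentence observing that, under the stated assumptions, \eqref{fe-update} is a convex combination of the IDP states $u_i$ and $\bar u^e,\ e\in\mathcal E_i$. Your additional remarks on the SSP-RK induction and the boundary case are accurate elaborations that the paper leaves implicit.
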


\begin{proof}
  The claim is true because 
  \eqref{fe-update} yields a convex combination $u_i^{\rm SSP}$
  of the IDP states $u_i$ and $\bar u^e,\ e\in\mathcal E_i$
  under the assumptions of the theorem.
\end{proof}  

\red{In view of \eqref{rusdiss}, the levels of Rusanov
dissipation built into \eqref{weak:low1}
are proportional to $1/\Delta t_e$.} 
The naive choice 
$\Delta t_e\coloneqq \Delta t\coloneqq
\min_{1\le e'\le E_h}\Delta t_{e'}^{\max}$ leads to an extremely
diffusive approximation of global Lax--Friedrichs type. 
An appropriate choice of time steps satisfying 
\eqref{cflglob} and \eqref{cflsubcell} is given by
\beq
\Delta t_e=\Delta t_e^{\max},\qquad
\Delta t=\min_{1\le e'\le E_h}\Delta t_{e'}.
\eeq
We call $\Delta t_e$ a \emph{\red{pseudo}} time step because, similarly to
graph viscosity coefficients $d_{ij}$ of edge-based LLF
schemes \cite{guermond2016,kuzmin2010a,luo1994}, the
value of $\Delta t_e^{\max}$ is determined at the level
of spatial semi-discretization \eqref{weak:low2} by
the local maximum speed and the distance between the
nodes of the Bernstein finite element.

As the degree $p$ of the polynomial approximation is increased
without changing the size of the macrocell $K_e$,
the IDP bound of Lemma~\ref{lemma1} becomes more
restrictive. To make $\Delta t_e^{\max}$ independent
of $p$, we redefine $\bar u^e$ as follows:
\beq\label{uebar2}
\bar u^e=u^e-\frac{\Delta t_e}{|K_e|}
\sum_{e'\in\Be}|S_{ee'}|\mathcal F(u^e,u^{e'};\mathbf n_{ee'}).
\eeq
That is, we approximate $\mathbf f_h|_{S_{ee'}}$ by the
LLF flux $\mathcal F(u^e,u^{e'};\mathbf n_{ee'})$,
where
$$\mathbf n_{ee'}
=\frac{\mathbf c_{ee'}}{|\mathbf c_{ee'}|},\qquad
\mathbf c_{ee'}=\sum_{i\in\Ne}\mathbf c_{i,ee'}=
\int_{S_{ee'}}\mathbf n\ds.
$$
\red{Importantly, the adjustment of numerical fluxes in the
formula for $\bar u^e$ does not affect the global
conservation property of \eqref{weak:low2}
and \eqref{fe-update}.} At the same time, the
assertions of Lemma~\ref{lemma1} and~
Theorem \ref{theorem1} become valid for
\beq\label{dtmaxcell}
\Delta t_e^{\max}=\frac{|K_e|}{\sum_{e'\in\Be}|S_{ee'}|\lambda_{ee'}}.
\eeq
The intermediate cell averages $\bar u^e$
defined by \eqref{uebar2} with $\Delta t_e\le \Delta t_e^{\max}$
are IDP for $\Delta t_e^{\max}$ given by \eqref{dtmaxcell}.
We omit the formal proof 
because \eqref{uebar2} has the same
structure as the finite volume and DG schemes considered in
\cite{guermond2019,kuzmin2021,kuzmin2023}. Therefore, the
IDP property of $\bar u^e$ is guaranteed under the same
macrocell CFL condition, which imposes the simple bound
\eqref{dtmaxcell} on the time step.

\begin{remark}\label{rem:timestep}
Instead of approximating \eqref{uebar} by \eqref{uebar2},
the time scale $\Delta t_e$ of 
formula \eqref{uebar} can be initialized using
\eqref{dtmaxcell}. If this choice produces an
unacceptable state $\bar u^e\notin\mathcal G$, the value
of $\Delta t_e$ should be decreased as much as necessary
to obtain $\bar u^e\in\mathcal G$. We use this strategy
in the 2D examples of Section~\ref{sec:num}. The IDP property
is guaranteed for $\Delta t_e$ satisfying
\eqref{cflsubcell} with $\Delta t_e^{\max}$ defined in
Lemma~\ref{lemma1}.  Note that, in view of \eqref{cflglob},
the value of the global time step $\Delta t$ is determined
by the smallest local time step. Hence, there is no guarantee
that $\Delta t$ can be increased without
changing the numerical fluxes as in \eqref{uebar2}.
\end{remark}

\section{Monolithic convex limiting}
\label{sec:limiters}

We are now ready to present a constrained high-order extension of the
low-order IDP schemes analyzed in Section~\ref{sec:low}. Adopting the
general design principles of monolithic convex limiting 
\cite{kuzmin2020,kuzmin2020f,kuzmin2020a} and adapting them
to our element-based setting, we consider semi-discrete
schemes of the form
\beq\label{mcl}
m_i\td{u_i}{t}=\sum_{e\in\mathcal E_i}m_i^e\frac{\bar u_i^e-u_i}{\Delta t_e}.
\eeq
The low-order approximation \eqref{weak:low2} uses the
intermediate states $\bar u_i^e=\bar u^e$. In the case of
non-periodic boundary conditions, which we discussed in
Section~\ref{sec:low}, the average LLF flux across
a boundary facet $S_{ee'}\subset\partial\Omega$ is given by 
$$
f_{ee'}^\partial=\frac{1}{|S_{ee'}|}\sum_{j\in\Ne^\partial}|\mathbf c_{j,ee'}|
\mathcal F(u_j,\hat u_{j,ee'};\mathbf n_{j,ee'}),
\qquad e'\in\Be^{\partial}.
$$

Let us blend the averaged high-order flux
\beq\label{hoflux}
f_{ee'}^H=\begin{cases}
\frac{1}{|S_{ee'}|}
\int_{S_{ee'}}\mathbf f_h\cdot\mathbf n\ds
& \mbox{if}\ e'\le E_h,\\
f_{ee'}^\partial
& \mbox{if}\ e'>E_h
\end{cases}
\eeq
and its low-order counterpart
\beq
f_{ee'}^L=\begin{cases}
\mathcal F(u^e,u^{e'};\mathbf n_{ee'})
& \mbox{if}\ e'\le E_h,\\
\mathcal F(u^e,\hat u^{e'};\mathbf n_{ee'})
& \mbox{if}\ e'>E_h
\end{cases}
\eeq
using a correction factor $\alpha_{ee'}=\alpha_{e'e}$ to be defined below.
The resulting flux-limited version of formula \eqref{uebar} reads
(cf.~\cite{kuzmin2021,kuzmin2025a})
\beq\label{uelim}
\bar u^e=u^e-\frac{\Delta t_e}{|K_e|}
\sum_{e'\in\Be}|S_{ee'}|(\alpha_{ee'}f_{ee'}^H+(1-\alpha_{ee'})f_{ee'}^L).
\eeq
If no facet of the cell $K_e$ belongs to a non-periodic boundary, the
choices $\alpha_{ee'}=1$ and $\alpha_{ee'}=0$ correspond to 
\eqref{uebar} and \eqref{uebar2}, respectively.

The high-order scheme 
\eqref{weak:stab2} can also be written in the form
\eqref{mcl} with
$\bar u_i^e=\bar u^{e}+f_i^e/m_i^e$, where $\bar u^{e}$ is
defined by \eqref{uelim} with $\alpha_{ee'}=1\ \forall e'\in\Be$.
The \emph{antidiffusive element contributions} $f_i^e$ that recover
\eqref{weak:stab2}
are given by
\begin{align}\nonumber
f_i^e=m_i^e(u_i-u^e)
&+\Delta t_e\Big[
  \int_{K_e}\nabla \varphi_i\cdot(\mathbf f(u_h)-\mathbf f_h)\dx\\
  &\phantom{-\Delta t_e\Big[}
  -\int_{K_e}\left(\varphi_i-\frac{m_i^e}{|K_e|}\right)
    \nabla\cdot\mathbf f_h\dx-g_i^e\label{aec}\\
  &\phantom{-\Delta t_e\Big[}
  -s_h^e(\varphi_i,u_h)
  -\int_{K_e}\varphi_i(\dot u_h-\dot u_i)\dx\Big],\nonumber
\end{align}
where 
\begin{align}
  g_i^e&=\frac{m_i^e}{|K_e|}\left(
\int_{\partial K_e\cap \partial\Omega}\mathbf f_h\cdot\mathbf n\ds
 - \sum_{j\in\Neb}
 \sum_{e'\in\Be^{\partial}}|\mathbf c_{j,ee'}|
 \mathcal F(u_j,\hat u_{j,ee'};\mathbf n_{j,ee'})
 \right) \nonumber\\&+
  \int_{\partial K_e\cap\partial\Omega}
  \varphi_i[\mathcal F(u_h,\hat u_h;\mathbf n)
    -\mathbf f_h\cdot\mathbf n]\ds.\label{aecb}
\end{align}
The contribution of $g_i^e$ vanishes in the periodic case and
for indices $i$ such that
$\mathbf x_i\notin\partial\Omega$. If $K_e$ has
no nodes on $\partial\Omega$, then $g_i^e=0\ \forall i\in\Ne$, and
the components of the
element vector $(f_i^e)_{i\in\Ne}$ satisfy the zero sum condition
\beq\label{zerosum}
\sum_{i\in\Ne} f_i^e=0
\eeq
\red{because
  $$m_i^e=\int_{K_e}\varphi_i\dx,\quad\sum_{i\in\Ne}\varphi_i\equiv 1,\quad
  \sum_{i\in\Ne}\nabla\varphi_i\equiv\mathbf 0,\qquad
  \sum_{i\in\Ne}\dot u_i\varphi_i=\dot u_h|_{K_e},
  $$
  $$
  \sum_{i\in\Ne}\frac{m_i^e}{|K_e|}=1,\qquad
  \sum_{i\in\Ne} m_i^e u_i
  =\int_{K_e}u_h\dx=|K_e|u^e=
  \Big(\sum_{i\in\Ne} m_i^e\Big)u^e.$$
  In the following theorem, we formally prove that our high-order
  CG-WENO discretization \eqref{weak:stab2}
  can be recovered by adding the antidiffusive element
  contributions $f_i^e$ to the low-order scheme with deactivated flux limiter.
}

\begin{theorem}
  {\it The semi-discrete scheme 
    \eqref{mcl} is equivalent to the CG-WENO scheme \eqref{weak:stab2}
  for $\bar u_i^e=\bar u^{e}+f_i^e/m_i^e$ with
     $f_i^e$ defined by \eqref{aec} and} 
$$\bar u^e=u^e-\frac{\Delta t_e}{|K_e|}
\sum_{e'\in\Be}|S_{ee'}|f_{ee'}^H.$$
\end{theorem}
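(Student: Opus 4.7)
The plan is to substitute the claimed expression for $\bar u_i^e$ into the right-hand side of \eqref{mcl} and verify, term by term, that one recovers \eqref{weak:stab2}. Writing
$$
m_i\dot u_i=\sum_{e\in\mathcal E_i}m_i^e\frac{\bar u^e-u_i}{\Delta t_e}+\sum_{e\in\mathcal E_i}\frac{f_i^e}{\Delta t_e},
$$
the first step is to observe that the mass-correction summand $m_i^e(u_i-u^e)$ in \eqref{aec} cancels exactly against $m_i^e(u^e-u_i)$ coming from $\bar u^e-u_i$ once $\bar u^e$ is expanded via $\bar u^e=u^e-(\Delta t_e/|K_e|)\sum_{e'\in\Be}|S_{ee'}|f_{ee'}^H$. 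After this cancellation the $\Delta t_e$ factors drop out, leaving a purely spatial identity to be checked element by element.

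The second step is to process the flux contributions. Using \eqref{hoflux} and the divergence theorem, one rewrites
$$
\sum_{e'\in\Be}|S_{ee'}|f_{ee'}^H=\int_{K_e}\nabla\cdot\mathbf f_h\dx-\int_{\partial K_e\cap\partial\Omega}\mathbf f_h\cdot\mathbf n\ds+\sum_{j\in\Neb}\sum_{e'\in\Be^\partial}|\mathbf c_{j,ee'}|\mathcal F(u_j,\hat u_{j,ee'};\mathbf n_{j,ee'}).
$$
Multiplying by $-m_i^e/|K_e|$ and subtracting $g_i^e$ from \eqref{aecb}, the two boundary-flux correction pieces match exactly, so the combination reduces to
$-\frac{m_i^e}{|K_e|}\int_{K_e}\nabla\cdot\mathbf f_h\dx-\int_{\partial K_e\cap\partial\Omega}\varphi_i[\mathcal F(u_h,\hat u_h;\mathbf n)-\mathbf f_h\cdot\mathbf n]\ds$.
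The first of these terms then cancels the $\frac{m_i^e}{|K_e|}\int_{K_e}\nabla\cdot\mathbf f_h\dx$ piece of $-\int_{K_e}(\varphi_i-m_i^e/|K_e|)\nabla\cdot\mathbf f_h\dx$ in \eqref{aec}, removing all dependence on $m_i^e/|K_e|$.

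The third step is integration by parts. The remaining volume contribution reads $\int_{K_e}\nabla\varphi_i\cdot\mathbf f(u_h)\dx-\int_{\partial K_e}\varphi_i\mathbf f_h\cdot\mathbf n\ds$, which by the divergence theorem on $\mathbf f(u_h)$ equals $-\int_{K_e}\varphi_i\nabla\cdot\mathbf f(u_h)\dx+\int_{\partial K_e}\varphi_i[\mathbf f(u_h)-\mathbf f_h]\cdot\mathbf n\ds$. Summing over $e\in\mathcal E_i$, the boundary integrals over interior facets cancel because $u_h$ and $\mathbf f_h$ are both continuous across element interfaces in the CG space and the outward normals are opposite; only the contributions on $\partial\Omega$ survive, which combine with the remaining boundary term from above to yield exactly $-b(\varphi_i,u_h,\hat u_h)$, while the volume integrals assemble to $-a(\varphi_i,u_h)$. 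Meanwhile $-\sum_e s_h^e(\varphi_i,u_h)=-s_h(\varphi_i,u_h)$ by definition.

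Finally, the term $-\sum_e\int_{K_e}\varphi_i(\dot u_h-\dot u_i)\dx$ in \eqref{aec} supplies precisely $m_i\dot u_i-(\varphi_i,\dot u_h)$, which when transferred to the left-hand side converts $m_i\dot u_i$ into the consistent-mass time derivative $(\varphi_i,\dot u_h)$. Collecting everything recovers \eqref{weak:stab2}. The main obstacle is purely combinatorial bookkeeping: tracking how the interior/boundary decomposition of $f_{ee'}^H$ interacts with $g_i^e$ so that the non-physical $m_i^e/|K_e|$ factors all cancel, and exploiting CG continuity of $u_h$ and $\mathbf f_h$ to eliminate the interior jump terms produced by integration by parts.
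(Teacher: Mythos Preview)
Your proposal is correct and follows essentially the same route as the paper's proof: substitute $\bar u_i^e=\bar u^e+f_i^e/m_i^e$, cancel the $m_i^e(u_i-u^e)$ mass terms, match the boundary pieces of $f_{ee'}^H$ against $g_i^e$, combine the remaining volume terms into $-\int_{K_e}\varphi_i\nabla\cdot\mathbf f(u_h)\dx$ plus a facet residual $\int_{\partial K_e}\varphi_i[\mathbf f(u_h)-\mathbf f_h]\cdot\mathbf n\ds$, and then use CG continuity to kill the interior-facet contributions after summing over $e\in\mathcal E_i$. The paper carries out the same manipulations but presents the transition from the $\mathbf f_h$ form to the $\mathbf f(u_h)$ form more tersely; your explicit remark that the interior jump terms vanish because both $u_h$ and $\mathbf f_h$ are continuous in the CG space is exactly the mechanism the paper relies on implicitly.
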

\begin{proof} Recalling the definition \eqref{hoflux} of the
  high-order flux $f_{ee'}^H$ and using the assumptions of the theorem,
  we find that
  \eqref{mcl} can be written as
  \begin{align*}
    m_i\td{u_i}{t}&=\sum_{e\in\mathcal E_i}m_i^e\frac{\bar u_i^e-u_i}{\Delta t_e}
    =\sum_{e\in\mathcal E_i}\left(m_i^e\frac{\bar u^e-u_i}{\Delta t_e}
      +\frac{f_i^e}{\Delta t_e}\right)\\
&=\sum_{e\in\mathcal E_i}\left(m_i^e\frac{u^e-u_i}{\Delta t_e}
      -\frac{m_i^e}{|K_e|}\sum_{e'\in\Be}|S_{ee'}|f_{ee'}^H
      +\frac{f_i^e}{\Delta t_e}\right) \\
      &= \sum_{e\in\mathcal E_i}\left(m_i^e\frac{u^e-u_i}{\Delta t_e}
      -\frac{m_i^e}{|K_e|}\int_{K_e}\nabla\cdot\mathbf f_h\dx
      +\frac{f_i^e}{\Delta t_e}+g^e
      \right),
  \end{align*}
  where $\int_{K_e}\nabla\cdot\mathbf f_h\dx
  =\int_{\partial K_e}\mathbf f_h\cdot\mathbf n\ds$ and
 \begin{align*}
   g^e&=\frac{m_i^e}{|K_e|}\left(\int_{\partial K_e\cap \partial\Omega}
   \mathbf f_h\cdot\mathbf n\ds-
  \sum_{j\in\Neb}\sum_{e'\in\Be^\partial}|\mathbf c_{j,ee'}|
  \mathcal F(u_j,\hat u_{j,ee'};\mathbf n_{j,ee'})\right)\\&
  =g_i^e-\int_{\partial K_e\cap\partial\Omega}
  \varphi_i[\mathcal F(u_h,\hat u_h;\mathbf n)
    -\mathbf f_h\cdot\mathbf n]\ds.
\end{align*}
Using \eqref{aec} and \eqref{aecb}, we deduce that
   \begin{align*}  
     m_i\td{u_i}{t}&=\sum_{e\in\mathcal E_i}\Big[
  \int_{K_e}\nabla \varphi_i\cdot(\mathbf f(u_h)-\mathbf f_h)\dx
  -\int_{K_e}\varphi_i
    \nabla\cdot\mathbf f_h\dx\\
    &\phantom{\sum_{e\in\mathcal E_i}\Big[}
    -\int_{\partial K_e\cap\partial\Omega}
  \varphi_i[\mathcal F(u_h,\hat u_h;\mathbf n)
    -\mathbf f_h\cdot\mathbf n]\ds\\
  &\phantom{\sum_{e\in\mathcal E_i}\Big[}
    - s_h^e(\varphi_i,u_h)-\int_{K_e}\varphi_i(\dot u_h-\dot u_i)\dx\Big]\\
    &=-\sum_{e\in\mathcal E_i}\Big[
    \int_{K_e}\varphi_i\nabla\cdot\mathbf f(u_h)\dx \\
        &\phantom{\sum_{e\in\mathcal E_i}\Big[}
    +\int_{\partial K_e\cap\partial\Omega}
  \varphi_i[\mathcal F(u_h,\hat u_h;\mathbf n)
    -\mathbf f(u_h)\cdot\mathbf n]\ds\\
    &\phantom{\sum_{e\in\mathcal E_i}\Big[}
    + s_h^e(\varphi_i,u_h)+\int_{K_e}\varphi_i(\dot u_h-\dot u_i)\dx\Big]\\
  & =
  -[a(\varphi_i,u_h)+b(\varphi_i,u_h,\hat u_h)+s_h(\varphi_i,u_h)
  +(\varphi_i,\dot u_h-\dot u_i)].
   \end{align*}
This proves that the scheme under investigation is equivalent to  
   \eqref{weak:stab2}.
\end{proof}

In the process of convex limiting, we first ensure that
\eqref{uelim} is IDP by choosing $\Delta t_e$ small enough
to satisfy the requirements of Lemma~\ref{lemma1} or using
$\Delta t_e=\Delta t_e^{\max}$ defined by \eqref{dtmaxcell}
and tuning the correction factors $\alpha_{ee'}\in[0,1]$.
Next, we calculate element-based
correction factors $\beta_e\in[0,1]$ such that the IDP property
of $\bar u^e$ is preserved by the intermediate states
\beq\label{slope}
\bar u_i^{e}=\bar u^{e}+\beta_ef_i^e/m_i^e,\qquad i\in\Ne.
\eeq
The application of $\beta_e$ to the element contributions
$f_i^e,\ i\in\mathcal N_e$ can be interpreted as
slope limiting of Barth--Jespersen type \cite{barth1989} in the scalar case and of Zhang--Shu type \cite{zhang2010b,zhang2011,zhang2012} in the case of
a hyperbolic system.

The fully discrete version of the constrained scheme \eqref{mcl}
is given by 
\beq\label{eq:SSPupdate}
m_iu_i^{\rm SSP}=\sum_{e\in\mathcal E_i}m_i^e\left[\left(1-\frac{\Delta t}{\Delta t_e}\right)u_i
  +\left(\frac{\Delta t}{\Delta t_e}\right)\bar u_i^e\right].
\eeq
Its IDP property is guaranteed under the global CFL
condition \eqref{cflglob} with \red{pseudo} time steps
$\Delta t_e=\Delta t_e^{\max}$ defined by \eqref{dtmaxcell} if
the flux limiter is activated (otherwise at least for
$\Delta t_e$ satisfying condition \eqref{cflsubcell}
of Lemma~\ref{lemma1}).

It remains to choose the algorithms for calculating
$\alpha_{ee'}$ and $\beta_e$. The choices that we make in this work
are based on our experience with convex
limiting of FCT and MCL type (see \cite[Ch. 4 and 6]{kuzmin2023},
 \cite{dobrev2018,hajduk2021,kuzmin2020,kuzmin2021,lohmann2017}).

\subsection{Flux limiting}
\label{sec:fluxlim}

Since our formula \eqref{uelim}  for the intermediate cell average $\bar u^e$ has the structure of a fully discrete finite volume scheme, flux limiting can be performed using the localized FCT algorithms presented in \cite{guermond2019,kuzmin2021,kuzmin2025a}. In general, the correction factor $\alpha_{ee'}\in[0,1]$ must satisfy the symmetry condition
\beq
\alpha_{ee'}=\alpha_{e'e}\qquad \forall e'\in\Be\backslash\Be^\partial
\eeq
and ensure that $\bar u^e\in\mathcal G$
for an invariant domain $\mathcal G$ that is preserved by
\beq\label{uelowFCT}
\bar u^{e,L}=u^e-\frac{\Delta t_e}{|K_e|}
\sum_{e'\in\Be}|S_{ee'}|f_{ee'}^L
\eeq
with the \red{pseudo} time step
$\Delta t_e=\Delta t_e^{\max}$, where $\Delta t_e^{\max}$
is given by \eqref{dtmaxcell}. 

The numerical fluxes that define the value of $\bar u^e$
in \eqref{uelim} can be written as
\beq\label{flim}
f_{ee'}=\alpha_{ee'} f_{ee'}^H+(1-\alpha_{ee'})f_{ee'}^L
=f_{ee'}^L-\bar f_{ee'}^A,
\eeq
where $\bar f_{ee'}^A=\alpha_{ee'}f_{ee'}^A$ is a limited counterpart
of the antidiffusive flux
$$f_{ee'}^A=f_{ee'}^L-f_{ee'}^H.
$$
For a general hyperbolic problem of the form \eqref{ibvp}, the representation of
$$
\bar u^e=\bar u^{e,L}+\frac{\Delta t_e}{|K_e|}
\sum_{e'\in\Be}|S_{ee'}|\bar f_{ee'}^A
=\sum_{e'\in\Be}
\frac{|S_{ee'}|}{|\partial K_e|}u_{ee'}
$$
as a convex combination of the flux-corrected intermediate states
$$u_{ee'}=\bar u^{e,L}+
\frac{|\partial K_e|\Delta t_e\bar f_{ee'}^A}{|K_e|}
=\bar u^{e,L}+\alpha_{ee'}
\frac{|\partial K_e|\Delta t_ef_{ee'}^A}{|K_e|}
$$ 
reveals that $\bar u^e\in\mathcal G$ if
$u_{ee'}\in\mathcal G\ \forall e'\in\Be$. The low-order approximation
$\bar u^{e,L}$ is IDP for our choice of $\Delta t_e$. If
$u_{ee'}\notin\mathcal G$ or $u_{e'e}\notin\mathcal G$,
then the high-order target flux $f_{ee'}^A=-f_{e'e}^A$ is unacceptable
and requires limiting.

In the scalar case, local or global bounds
$u^{e,\min}\in\mathcal G$ and $u^{e,\max}\in\mathcal G$
are imposed on the cell average $\bar u^e$ defined by \eqref{uelim}. The
validity of the discrete maximum principle
$\bar u^e\in[u^{e,\min},u^{e,\max}]$
is
guaranteed for 
\beq\label{fAlim}
\bar f_{ee'}^A
=\begin{cases}
\min\{f_{ee'}^A,f_{ee'}^{\max}\} & \mbox{if}\ f_{ee'}^A\ge0,\\
\max\{f_{ee'}^A,f_{ee'}^{\min}\} & \mbox{if}\ f_{ee'}^A< 0,
\end{cases}
\eeq
where
\cite{kuzmin2021,kuzmin2020b,kuzmin2025a},\cite[Sec. 5.3.1.1]{kuzmin2023}
\begin{align*}
  f_{ee'}^{\max}&=\begin{cases}
   \min\left\{
    \frac{|K_e|}{|\partial K_e|}\frac{u^{e,\max}-u^{e,L}}{\Delta t_e},
    \frac{|K_{e'}|}{|\partial K_{e'}|}\frac{u^{e',L}-u^{e',\min}}{\Delta t_{e'}}\right\}
    & \mbox{if}\ e\le E_h,\\
\frac{|K_e|}{|\partial K_e|}\frac{u^{e,\max}-u^{e,L}}{\Delta t_e} &
\mbox{if}\ e> E_h,
  \end{cases} \\
  f_{ee'}^{\min}&=\begin{cases}
  \max\left\{
    \frac{|K_e|}{|\partial K_e|}\frac{u^{e,\min}-u^{e,L}}{\Delta t_e},
    \frac{|K_{e'}|}{|\partial K_{e'}|}\frac{u^{e',L}-u^{e',\max}}{\Delta t_{e'}}\right\}
    & \mbox{if}\ e\le E_h,\\
\frac{|K_e|}{|\partial K_e|}\frac{u^{e,\min}-u^{e,L}}{\Delta t_e} &
\mbox{if}\ e> E_h.
      \end{cases}
\end{align*}
Note that the unnecessary and
ill-conditioned calculation of a correction
factor $\alpha_{ee'}\in[0,1]$ for 
$\bar f_{ee'}^A=\alpha_{ee'}f_{ee'}^A$ is avoided in \eqref{fAlim}.

Guermond et al.~\cite{guermond2018,guermond2019} present a
general line search algorithm for finding
$\alpha_{ee'}$ that ensures positivity preservation
for quasi-concave functions of the conserved variables.
An explicit formula for such $\alpha_{ee'}$ can be
derived if the IDP constraints are linear or can be replaced
with linear sufficient conditions, as shown in
\cite{kuzmin2020,kuzmin2023,moujaes2025} for the
Euler equations of gas dynamics. A~very elegant pressure
fix was derived by Abgrall et al. \cite{abgrall-arxiv,wissocq2025}
using the general framework of geometric quasi-linearization
\cite{wu2023}. \red{A similar formula for $\alpha_{ee'}$ is
  derived in Appendix A using the linearization proposed
  in \cite{kuzmin2020,kuzmin2020c}.}

\begin{remark}\label{remark:mcl}
In contrast to the FCT-type convex limiting algorithms presented
in \cite{guermond2018,guermond2019}, we constrain the intermediate
cell averages \eqref{uelim} of the
semi-discrete scheme~\eqref{mcl} instead of
 auxiliary states $u_{ee'}^{\rm FCT}$
that depend on the \red{(global) real} 
time step $\Delta t$ rather than a \red{(local) pseudo} time step
$\Delta t_e$. Therefore, our limiting strategy is monolithic
in the sense that $\alpha_{ee'}$ is independent
of $\Delta t$.
\end{remark}

\subsection{Slope limiting}
\label{sec:slopelim}

The intermediate nodal
states $\bar u_i^e$ of formula \eqref{slope} can be
constrained using one of the element-based convex limiting
algorithms reviewed in \cite[Ch. 4 and 6]{kuzmin2023}.
The localized FCT schemes developed in
\cite{cotter2016,dobrev2018,lohmann2017}
define 
\beq\label{slimit1}
\beta_e=\min_{i\in\Ne}\beta_{i,e}
\eeq
using nodal correction factors $\beta_{i,e}\in[0,1]$ such that
the IDP property of $\bar u^e$ implies that of $\bar u_i^e
=\bar u^e+\beta_ef_i^e/m_i^e$ for any $\beta_e\in[0,\beta_{i,e}]$.
Importantly, the multiplication by $\beta_e$ preserves the
zero-sum property \eqref{zerosum} of the unlimited element
contributions $f_i^e$ in the absence of boundary terms.

To preserve global or local bounds $u_i^{\min}\in\mathcal G$
and $u_i^{\max}\in\mathcal G$ in the scalar case, we
calculate $\beta_{i,e}$ using the FCT formula
\cite{dobrev2018,hajduk2020,kuzmin2025a,lohmann2017}
\beq\label{slimit2}
\beta_{i,e}=
\begin{cases}
\min\left\{1,\frac{m_i^e(u_i^{\max}-\bar u^e)}{f_i^e}\right\}
&\mbox{if}\ \ f_i^e>0,\\
\min\left\{1,\frac{m_i^e(u_i^{\min}-\bar u^e)}{f_i^e}\right\}
&\mbox{if}\ \ f_i^e<0,\\
1 & \mbox{otherwise}.
\end{cases}
\eeq
As noticed in \cite[Sec. 4.3]{lohmann2017}, this definition
of $\beta_{i,e}$ has the same structure as the vertex-based
version \cite{kuzmin2010} of the Barth--Jespersen slope limiter \cite{barth1989}.

In extensions to systems, the IDP property of
$\bar u_i^e$ can be enforced as in the work of Zhang and Shu
\cite{zhang2010b,zhang2011,zhang2012} or using explicit
formulas derived from linear sufficient conditions. Examples
of element-based algorithms using
closed-form IDP limiters for the compressible Euler equations can be found
in \cite{dobrev2018,kuzmin2020c} and \cite[Example~4.16]{kuzmin2023}.
\red{The simple pressure limiter from
\cite{kuzmin2023,kuzmin2020c} is presented in Appendix A.}
The positivity fix employed in \cite{abgrall-arxiv,wissocq2025}
can also be adapted to the element-based format. The 
framework of geometric quasi-linearization \cite{wu2023} can
be used to derive $\beta_{i,e}$ for general systems.

\begin{remark}
  The monolithic structure of the semi-discrete scheme \eqref{mcl}
  using the flux-corrected cell averages \eqref{uelim} is preserved
  by our slope limiting procedure. Similarly to the
   MCL schemes reviewed in \cite{kuzmin2023}, constraints
  are imposed at the level of spatial semi-discretization,
  which has a well-defined residual even in the steady state
  limit. This property distinguishes our approach from
  predictor-corrector FCT algorithms and classical slope limiters.
\end{remark}

\subsection{Summary and discussion}

\red{The algorithm that we use to construct and limit a high-order
  CG-WENO discretization in each SSP-RK stage 
  involves the following steps:
  \begin{enumerate}
\item In a loop over elements $K_e$,
     \begin{itemize}
\item Perform WENO reconstruction and calculate $\gamma_e$
       given by \eqref{WENO:alpha}.
\item Calculate and store the contributions
       $s_h^e(\varphi_i,u_h),\ i\in\Ne$ of \eqref{stab:WENO}.
\item Add all contributions of $K_e$ to the right-hand side or residual.
 \end{itemize}   
   \item Solve the linear system of equations
     \eqref{weak:stab2} for the nodal states of $\dot u_h$.
\item In a loop over facets $S_{ee'}$, calculate $f_{ee'}=f_{ee'}^H$
  if flux limiting is deactivated
  and $f_{ee'}$ given by \eqref{flim} otherwise (see Section ~\ref{sec:fluxlim}).
  
\item In a loop over elements $K_e$,
  \begin{itemize}
  \item Calculate the intermediate cell average
$\bar u^e=u^e-\frac{\Delta t_e}{|K_e|}
    \sum_{e'\in\Be}|S_{ee'}|f_{ee'}$ using a pseudo time step
    $\Delta t_e\le\Delta t_e^{\max}$ with $\Delta t_e^{\max}$
    given by \eqref{dtmaxsubcell}
    if the flux limiter is deactivated  and by
    \eqref{dtmaxcell} otherwise.
  \item Calculate the element contributions $f_i^e,\ i\in\Ne$ 
    given by \eqref{aec}.
  \item Use a slope limiter (see Section \ref{sec:slopelim}) to calculate
    $\beta_e$ given by \eqref{slimit1}.
  \item Calculate the intermediate nodal states $\bar u_i^e,\ i\in \Ne$ given by \eqref{slope}.
  \end{itemize}      
\item Perform the SSP update \eqref{eq:SSPupdate} using a time step
  $\Delta t$ that satisfies \eqref{cflglob}.
\end{enumerate}}

We have intentionally refrained from using matrix notation
for our finite element schemes because all algorithmic steps
can be performed without calculating global or local matrices.
Even the system of equations \eqref{weak:stab2} for the nodal time
derivatives $\dot u_j$ can be solved using a matrix-free
iterative method~\cite{abgrall2017b}. The residuals of fully
discrete schemes can be assembled from element vectors
without generating full element matrices.
\blue{
The number of maximum speeds $\lambda_{0,i}^e$ that are needed to calculate $\Delta t_e^ {\max}$ defined by \eqref{dtmaxsubcell}
equals the small number of boundary nodes $i\in\mathcal N_e^{\partial}$ rather than the large number of all node pairs ($i,j$) with indices $i\in\mathcal N_e$ and $j\in \mathcal N_e\backslash\{i\}$. In the flux-limited version, $\Delta t_e^ {\max}$ is determined by \eqref{dtmaxcell} rather than \eqref{dtmaxsubcell}, and the number of speeds $\lambda_{ee'}$ equals the number of cells with indices $e'\in\mathcal B_e$.}

\blue{
\begin{remark}
  The virtual finite element method developed
  by Abgrall et al.~\cite{abgrall2026} uses monolithic
  convex limiting in a ``PAMPA'' algorithm
  \cite{abgrall2025} that also represents a semi-discrete
  scheme in terms of nodal and average states. The employed flux limiter 
  constrains the cell averages at the time level $t^{n+1}$ using the
  MCL framework introduced in \cite{kuzmin2020,kuzmin2021} and the
  pressure fix presented in \cite{abgrall-arxiv,wissocq2025}.
  The raw antidiffusive element contributions $f_i^e$ of the slope limiting
  step correspond to a residual distribution method. Our element-based
  MCL formulation uses a different convex decomposition of a different
  baseline scheme (CG-WENO). Slope limiting is performed w.r.t. different
  states (intermediate cell averages rather than low-order nodal states).
  While our approach is designed for
  arbitrary-order finite element approximations on simplex or
  tensor-product meshes, the PAMPA scheme proposed in \cite{abgrall2026}
  is suitable for general polygonal meshes and up to third-order accurate.
  The triangular mesh version  \cite{abgrall2025} uses $\mathbb P_2$
  elements enriched by cubic bubble functions.
\end{remark}
}

\section{Numerical results}
\label{sec:num}

Let us now apply the proposed methodology to hyperbolic test problems.
If the exact entropy solution of the problem at hand is unknown, we compare our results with reference solutions obtained on very fine meshes using
a finite volume scheme with LLF fluxes.
In this numerical study, we use uniform structured meshes and initialize the
Bernstein degrees of freedom
by $u_j(0)\coloneqq u_0(\mathbf x_j)$ if the second-order
accuracy of this approximation is sufficient for our purposes.
Unless stated otherwise, we impose periodic boundary conditions
and set the steepening parameter in \eqref{WENO:alpha} equal to $q=1$.
Discretization in time is performed using the third-order explicit SSP Runge--Kutta
method with adaptive global time steps corresponding to
$\omega_{\rm CFL}=0.5$, where $\omega_{\rm CFL}=
\Delta t/\Delta t^{\max}$ is a constant
scaling factor for the sharp bound 
$\Delta t^{\max}$ of the CFL condition that guarantees the IDP property.

\subsection{Scalar problems in 1D}

In our numerical studies for scalar one-dimensional problems of the form \eqref{ibvp}, the invariant domain to be preserved is $\mathcal G=[u^{\min},u^{\max}]$ with $u^{\min}=\min_{x\in \bar\Omega}u_0(x)$ and
$ u^{\max}=\max_{x\in \bar\Omega}u_0(x)$.

\subsubsection{Linear advection}\label{sec:1dadv}

The simplest useful test problem  is the 1D linear transport equation corresponding
to \eqref{ibvp} with $f(u)=u$ and $\Omega=(0,1)$. That is, the initial data $u_0$
is advected with constant velocity $v=1$. The periodic boundary conditions
$u(0,t)=u(1,t)$ are imposed at the end points of $\Omega$. In this 1D example, the
exact solution $u(\cdot,t)$  coincides with $u_0$ for all time instants $t\in\N_0$.

To begin, we advect the $L^2(\Omega)$ projection $u_h(\cdot,0)\in V_h$ of the smooth initial
profile $u_0(x)=\mathrm e^{-100(x-0.5)^2}$ 
to study the convergence behavior of the CG-WENO
target scheme \eqref{weak:stab2} and of its limited counterpart \eqref{mcl}.
The $L^2(\Omega)$ errors at the final time $t=1$ and the corresponding experimental
orders of convergence (EOC) for $\mathbb P_1$ and $\mathbb P_2$ elements
are listed in \cref{tab:eoc-advection}.

\begin{table}[ht!]
\centering
\scriptsize
\begin{tabular}{c||cc|cc||cc|cc}
&\multicolumn{4}{c||}{$\mathbb P_1$}&\multicolumn{4}{c}{$\mathbb P_2$} \\
$1/h$ & WENO & EOC & WENO-L & EOC & WENO & EOC & WENO-L & EOC \\
\hline
32  & 8.84E-02 &      & 8.95E-02       && 6.51E-04 &      & 4.29E-03 \\
64  & 3.12E-02 & 1.50 & 3.13E-02 & 1.52 & 4.81E-05 & 3.76 & 8.24E-04 & 2.38 \\
128 & 3.60E-03 & 3.11 & 3.61E-03 & 3.12 & 5.97E-06 & 3.01 & 1.59E-04 & 2.38 \\
256 & 2.74E-04 & 3.72 & 2.76E-04 & 3.71 & 7.63E-07 & 2.97 & 3.04E-05 & 2.38 \\
512 & 2.33E-05 & 3.55 & 2.85E-05 & 3.28 & 9.64E-08 & 2.98 & 5.89E-06 & 2.37
\end{tabular}
\caption{$L^2(\Omega)$ error norms and convergence rates for
  1D advection of the smooth profile $u_0(x)=\mathrm e^{-100(x-0.5)^2}$
  with velocity $v=1$ up to $t=1$.}\label{tab:eoc-advection}
\end{table}
While the optimal convergence rate $p+1$ is attained or even exceeded by the target scheme for Bernstein finite elements of degree $p\in\{1,2\}$, it turns out that even slope limiting \wrt global bounds is too restrictive in the $\mathbb P_2$ case.
The issue here is subtle and can be explained as follows:
While the \textit{pointwise values} of the exact solution remain
within the range $\mathcal G=[0,1]$, this
is not necessarily the case for the \textit{Bernstein coefficients}.
This peculiarity of Bernstein-basis AFC schemes is the reason 
why constraining the degrees of freedom to stay in $\mathcal G$ leads to peak clipping effects and a loss of accuracy in the neighborhood of smooth global extrema (cf. \cite{hajduk2020b}). To achieve optimal convergence, we envision the use of collocated LGL spectral elements (cf.~\cite{pazner2021,rueda-ramirez2024}) in future implementations of the proposed methodology. 

\begin{remark}
  We observed similar convergence behavior in numerical experiments for
  the nonlinear Burgers test from \cite[Sec.~7.2]{kuzmin2023a}. The results
  (not shown here) confirm that the target scheme converges
  at the optimal rate $p+1$. The limited $\mathbb{P}_1$ approximation exhibits second-order
  convergence, but the imposition of global bounds on the Bernstein coefficients of the
  $\mathbb{P}_2$ version
  prevented us from achieving full third-order accuracy.
\end{remark}

In the next linear advection test, we advect the initial profile~\cite{hajduk2021}
\begin{align}\label{eq:adv-init}
u_0(x) = \begin{cases}
1 & \text{if } 0.2 \le x \le 0.4, \\
\exp(10)\exp(\frac{1}{0.5-x})\exp( \frac{1}{x-0.9}) & \text{if } 0.5 < x < 0.9, \\
0 & \text{otherwise},
\end{cases}
\end{align}
which is comprised of a discontinuous step function and a bump with
$C^{\infty}$-regularity. The final time is again $t=1$.

Using the fixed number $N_h=128$ of nodal points for Bernstein finite elements with polynomial degrees $p\in\{1,2,4,8,16\}$, we perform the linear advection test for the IDP methods presented in Sections~\ref{sec:low} and~\ref{sec:limiters}. In algorithms that use the high-order cell average \eqref{uebar}, the local bounds $\Delta t_e$ of the CFL condition \eqref{cflglob} may not exceed the threshold $\Delta t_e^{\max}$ of Lemma~\ref{lemma1}. The use of the low-order cell average \eqref{uebar2} ensures the IDP property for $\Delta t_e^{\max}$ defined by the less restrictive
CFL bound \eqref{dtmaxcell} of the finite volume LLF scheme. The flux-limited version \eqref{uelim} is IDP under the same CFL condition, which reduces to $\frac{v\Delta t}{h} \le \frac12$ with $v=1$
for this particular test problem.

The results shown in \cref{fig:adv-a,fig:adv-b} confirm that flux limiting can drastically mitigate the time step restrictions. Interestingly enough, the flux-corrected approximations even seem to be less diffusive than numerical solutions obtained without flux limiting but using smaller time steps. This behavior indicates that smearing effects tend to become less pronounced if fewer time steps are needed to reach the final time. It can also be observed that the low-order results become less accurate as the polynomial degree is increased while the mesh is coarsened to match the fixed value of~$N_h$. In view of the findings reported in~\cite{hajduk2020,kuzmin2020a,lohmann2017}, this behavior is caused by the fact that the inherent Rusanov dissipation \eqref{rusdiss} lacks sparsity.

\red{In our experience, the accuracy of the low-order solutions can be greatly improved by using the largest \red{pseudo} time step $\Delta t_e$ that ensures the IDP property of $\bar u^e$ for the given data. However, the results presented in \cref{fig:adv-a} show that the accuracy of slope-limited high-order approximations is not significantly affected by the levels of numerical dissipation in the low-order method corresponding to $\beta_e=0$.} As expected, the step function is slightly better captured with $\mathbb P_1$ elements, while the $\mathbb P_2$ version preserves the peak of the smooth bump with higher precision, despite the fact that the corresponding mesh is one level coarser than that for the $\mathbb P_1$ simulation. 

While it is remarkable that flux limiting ensures the validity of IDP constraints under the large-cell CFL condition, the requirement of stability for the unconstrained target (CG-WENO) scheme may dictate the use of smaller time steps in practice. If the target is unstable, violations of global bounds can be prevented by flux/slope limiters, but spurious oscillations may occur within these bounds, leading to unbounded growth of total variation or approximations of poor quality (see, \eg \cite[left panel of Fig.~3]{hajduk2025a}). The CFL bound of the high-order scheme using Bernstein finite elements does depend on the polynomial degree and may be just marginally larger than the time step that meets the requirements of Lemma~\ref{lemma1}. Moreover, the efficiency gain resulting from the use of slightly larger $\Delta t$ is diminished by the overhead cost associated with flux limiting. For spectral element approximations of LGL type, the CFL condition of the high-order discretization is far less severe than the IDP time step restriction \eqref{cflsubcell} of Lemma~\ref{lemma1}. Hence, the possibility of optional flux limiting may lead to substantial speedups in the LGL version, which represents a promising alternative to Bernstein polynomials.


\begin{figure}[ht!]
\begin{subfigure}[b]{0.32\textwidth}
\caption{No flux limiting}\label{fig:adv-a}
\includegraphics[width=\textwidth]{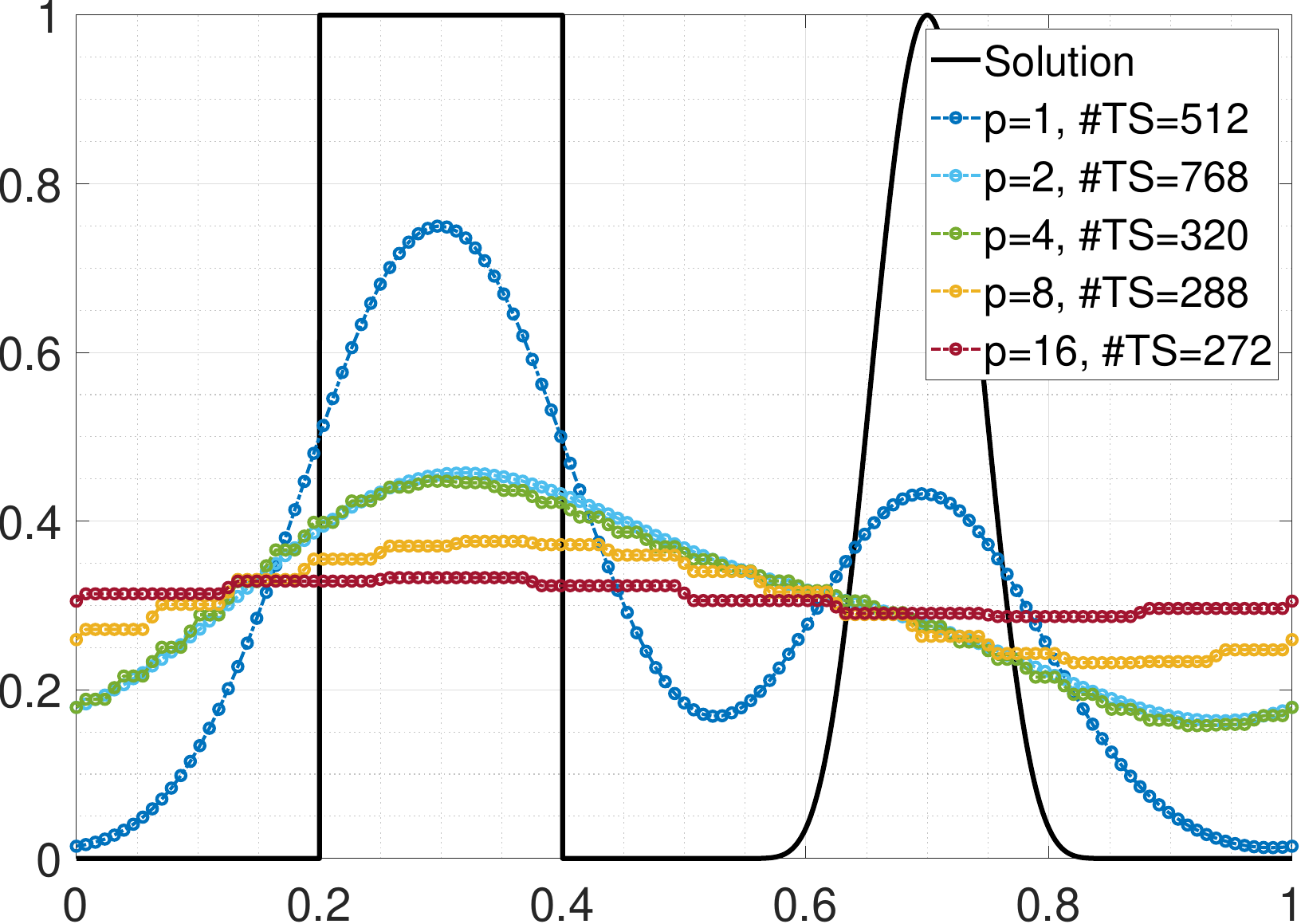}
\end{subfigure}
\begin{subfigure}[b]{0.32\textwidth}
\caption{With flux limiting}\label{fig:adv-b}
\includegraphics[width=\textwidth]{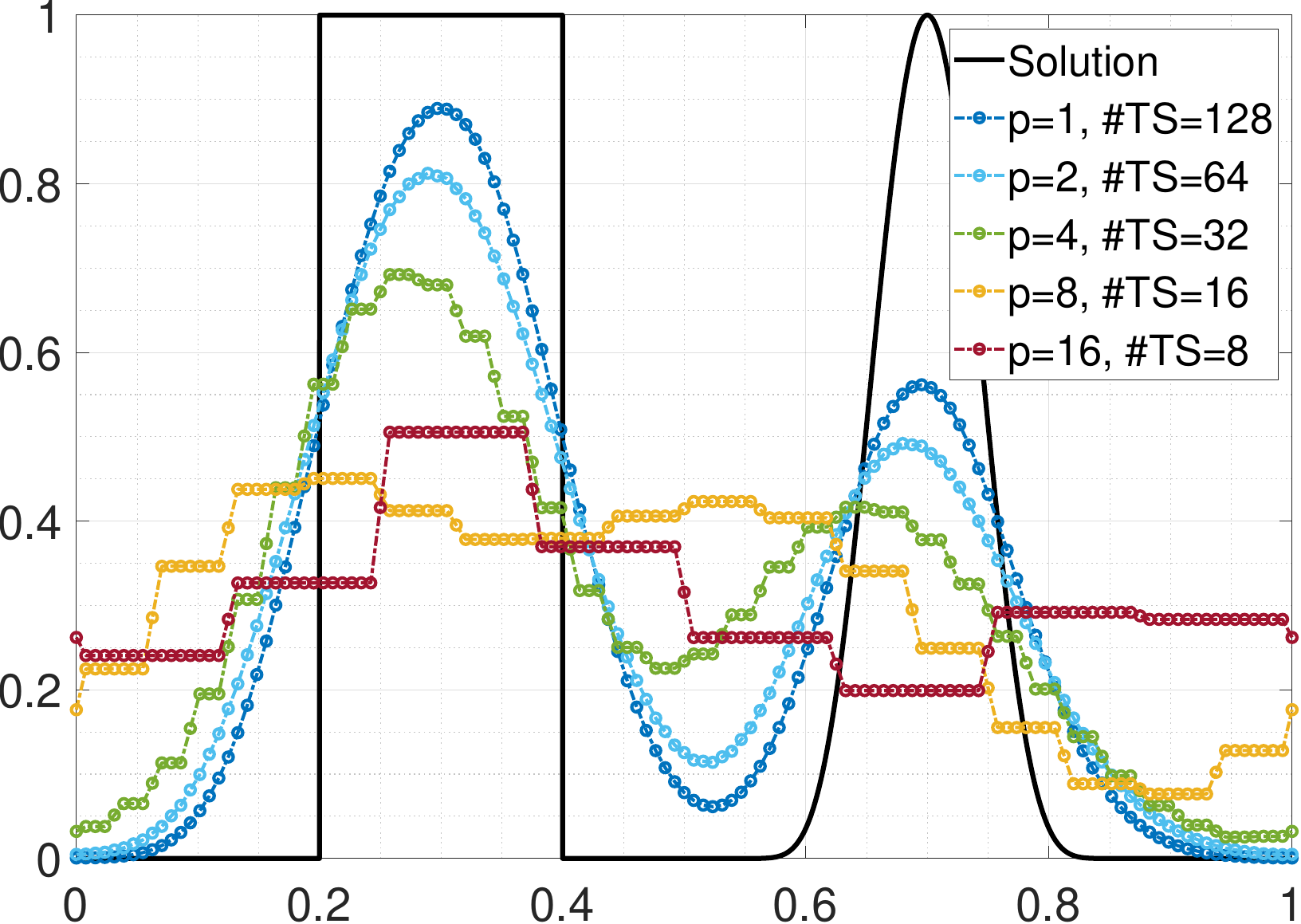}
\end{subfigure}
\begin{subfigure}[b]{0.32\textwidth}
\caption{Slope limiting only}
\includegraphics[width=\textwidth]{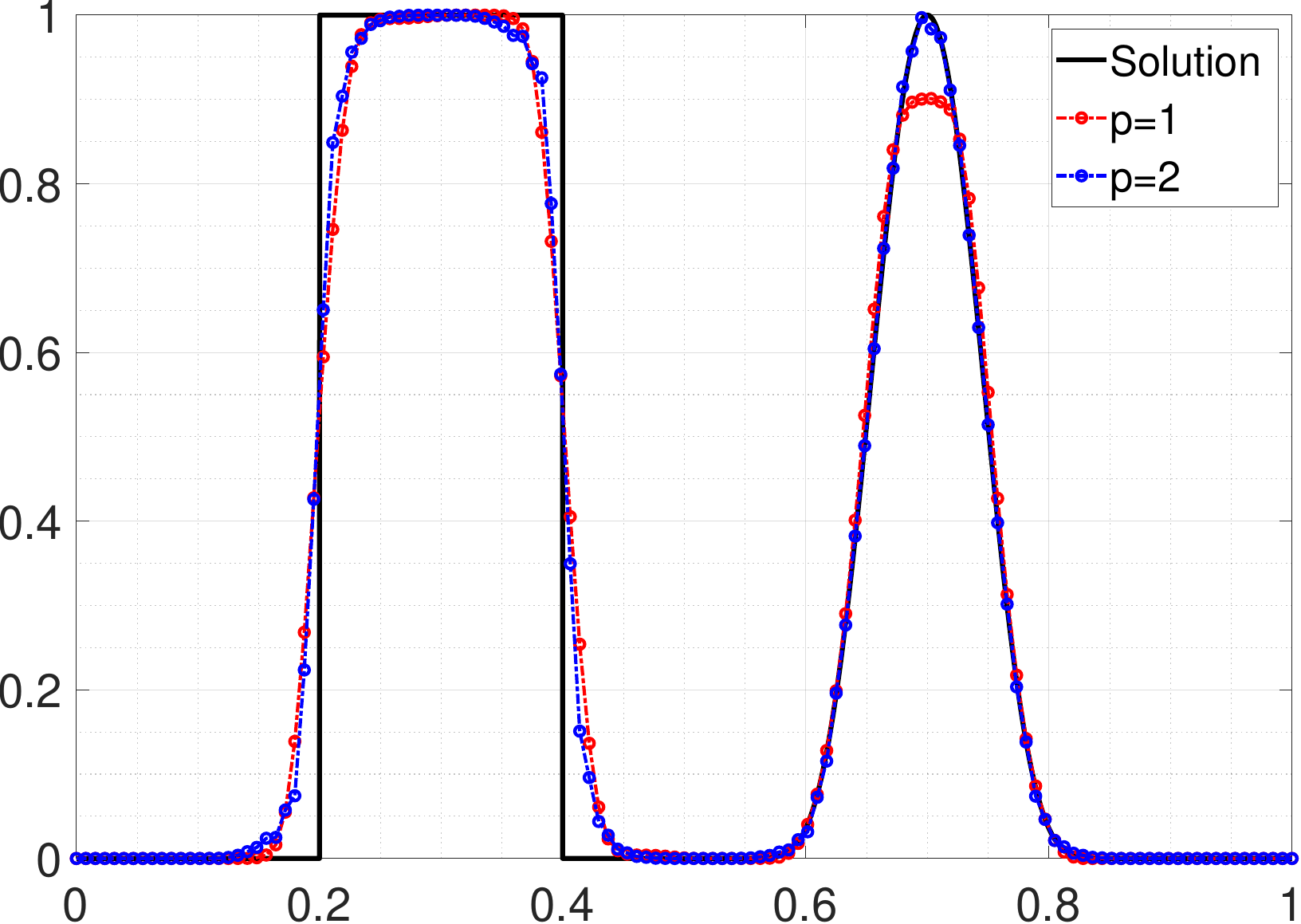}
\end{subfigure}
\caption{Low-order (a,b) and slope-limited (c) solutions to the 1D linear advection problem with initial condition \eqref{eq:adv-init} at $t=1$. All approximations use $N_h=128$ nodes. The number of employed time steps
  is denoted by \#TS.}
\end{figure}

\subsubsection{Nonlinear problem with a nonconvex flux}\label{sec:1dkpp}

Next, we consider the first of the two Riemann problems proposed in~\cite[Sec.~3.1]{kurganov2007} for the 1D nonlinear conservation 
law with the $C^1(\R)$ flux function
\begin{align}\label{eq:1dkpp-flux}
f(u) = \begin{cases}
\frac14 u(1-u) & \mbox{if } u\le\frac12, \\
\frac12u(u-1) + \frac3{16} & \mbox{otherwise.}
\end{cases}
\end{align}
The domain $\Omega=(0,1)$ has an inlet at $x=0$ and an outlet
at $x=1$. The inflow boundary condition $u(0,t)=0$
is prescribed in this test. The initial condition is given by $u_0(x)=\chi_{\{x\ge1/4\}}$, where $\chi_X$ is the characteristic function of a set $X\subset\Omega$. The unique entropy solution consists of a shock propagating to the right and a rarefaction wave that connects the pre-shock value with the right state of the Riemann problem. A closed-form expression for the vanishing viscosity solution can be found in \cite[Suppl. materials, Sec.~A.4.2.2]{kuzmin2023}.
Many numerical methods fail to predict the pre-shock value correctly
and yield profiles with piecewise-constant areas around the shock.

As in the previous example, we first use the low-order methods to solve this problem up to the final time $t=1$ using a total of $N_h=129$ nodes for all combinations of polynomial degree $p$ and mesh resolution $h$. The low-order results obtained without and with flux limiting are displayed in \cref{fig:1dkpp-a,fig:1dkpp-b}. The slope-limited recovery of auxiliary nodal states $\bar u_i^e$ from the unlimited high-order version of $\bar u^e$ yields the approximations shown in \cref{fig:1dkpp-c} for $p\in\{1,2\}$. Since the exact solution is piecewise linear in this test, the $\mathbb P_1$ solution is more accurate than the $\mathbb P_2$ approximation with the same number of nodes. However, the accuracy of the latter is still satisfactory.

Figure \ref{fig:1dkpp-d} shows the evolution of the total
 entropy $\eta_\Omega(u)=\frac12\int_\Omega u^2\D x$ for the slope-limited $\mathbb P_1$ and $\mathbb P_2$ approximations. We observe a monotone decrease of  $\eta_\Omega(u_h)$, which is dominated by the outflow through the boundary at $x=1$. Remarkably, we found no need for entropy stabilization in this test.


\begin{figure}[ht!]
\begin{subfigure}[b]{0.24\textwidth}
\caption{No flux limiting}\label{fig:1dkpp-a}
\includegraphics[width=\textwidth]{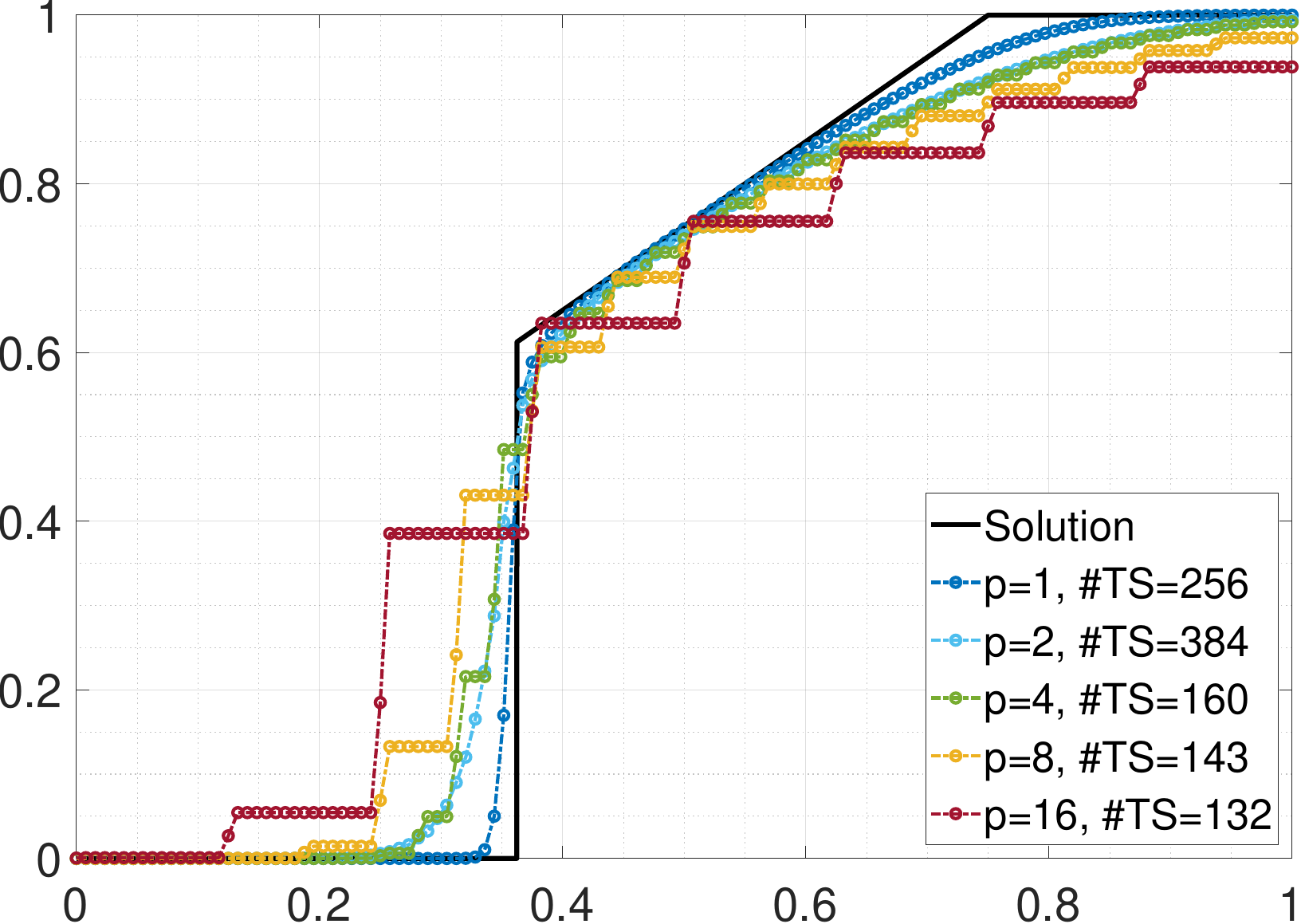}
\end{subfigure}
\begin{subfigure}[b]{0.24\textwidth}
\caption{With flux limiting}\label{fig:1dkpp-b}
\includegraphics[width=\textwidth]{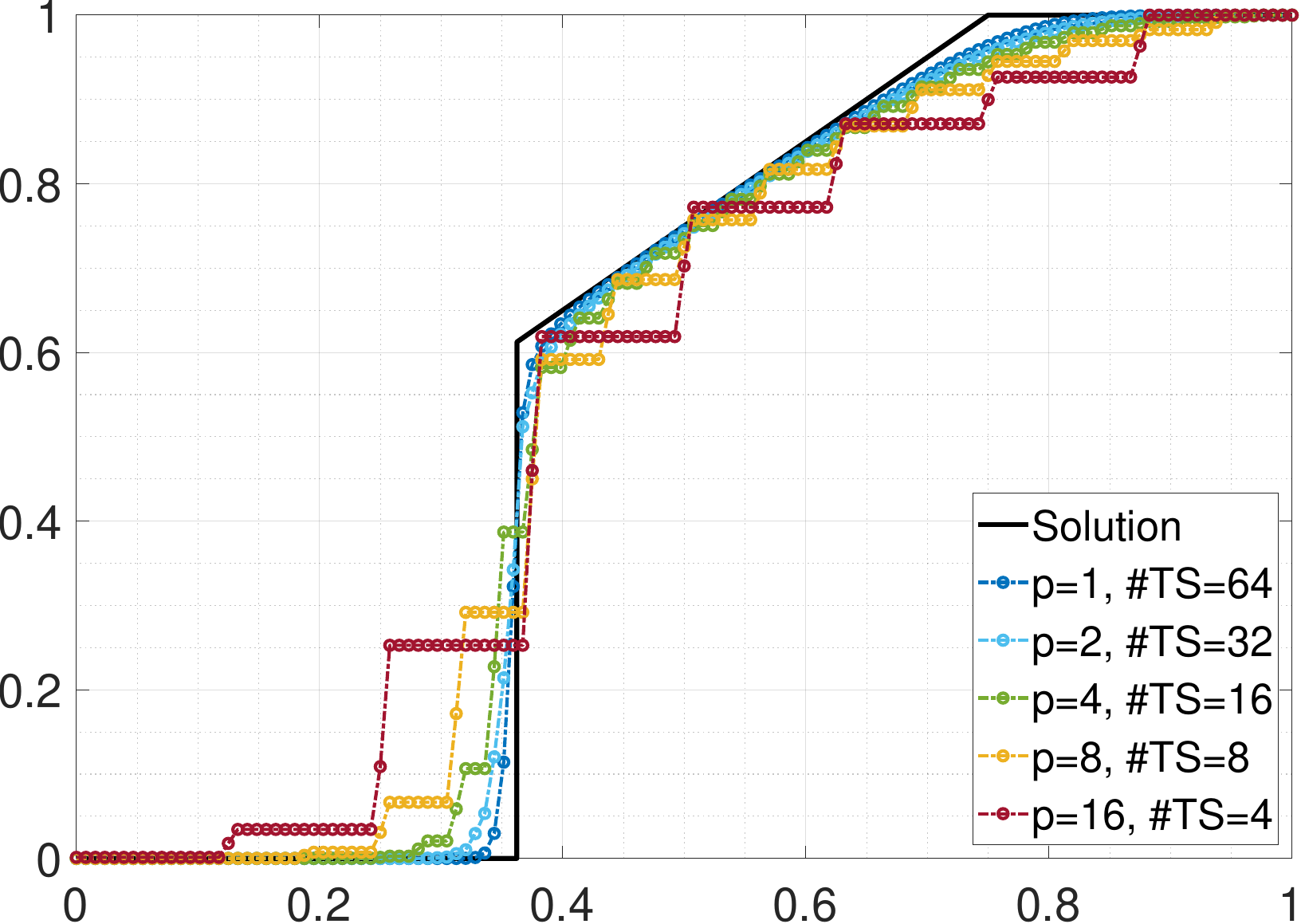}
\end{subfigure}
\begin{subfigure}[b]{0.24\textwidth}
\caption{Slope limiting only}\label{fig:1dkpp-c}
\includegraphics[width=\textwidth]{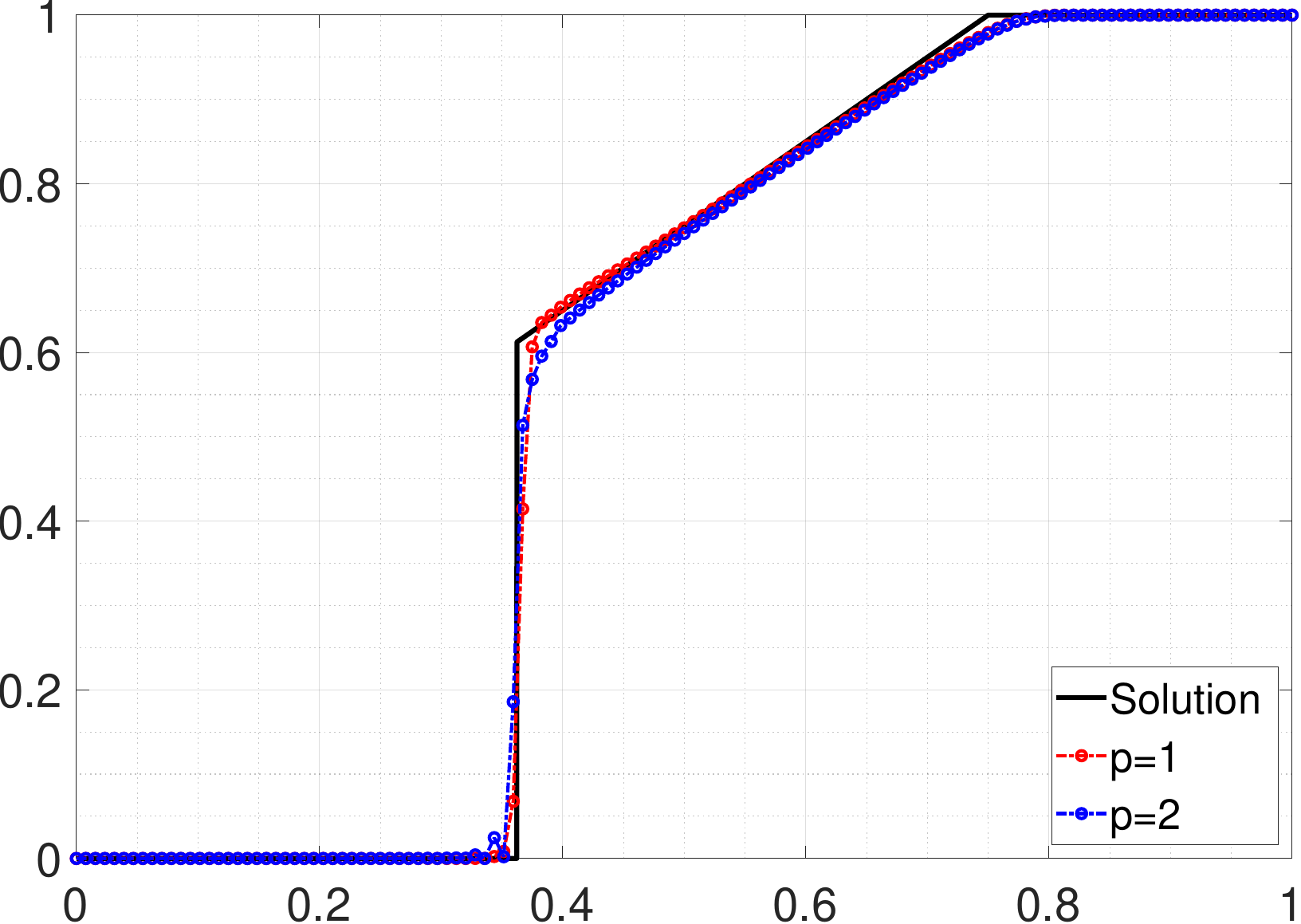}
\end{subfigure}
\begin{subfigure}[b]{0.24\textwidth}
\caption{$\eta_\Omega(u_h)$ vs. $t$}\label{fig:1dkpp-d}
\includegraphics[width=\textwidth]{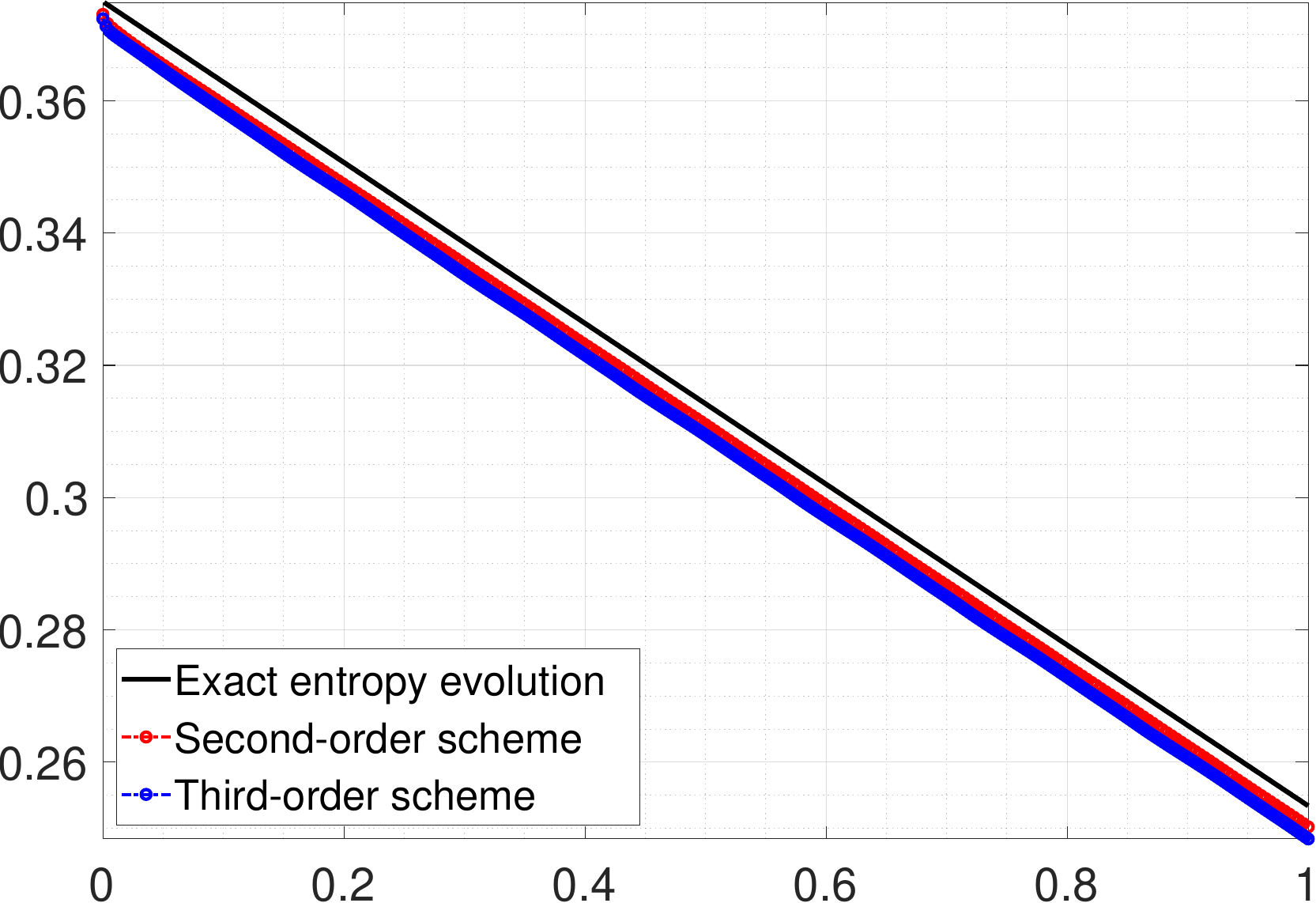}
\end{subfigure}
\caption{Results for the nonlinear 1D conservation law with the flux function \eqref{eq:1dkpp-flux} and $u_0(x)=\chi_{x\ge1/4}$. Low-order (a,b) and slope-limited (c) solutions at $t=1$. All approximations use $N_h=129$ nodes. The number of employed time steps is denoted by \#TS. The entropy evolution history is plotted in panel (d).}
\end{figure}

\subsection{Euler equations in 1D}

Let us now consider the 1D compressible Euler equations, a nonlinear
hyperbolic system of conservation laws for
the density $\rho$, momentum $\rho v$, and total energy $\rho E$
of an ideal gas. The pressure $p$ is given by the equation of state
$p = (\gamma-1)(\rho E - \tfrac12 \rho|v|^2)$, where $\gamma=1.4$
is the adiabatic constant. An invariant domain $\mathcal G$ of the
Euler system consists
of all states $u=(\rho,\rho v,\rho E)^\top$ such that $\rho> 0$
and $p(u)> 0$. The linear positivity constraint for $\rho$ can be
enforced as in the scalar case. To ensure positivity preservation
for $p(u)$, we adapt the 
limiter proposed by Abgrall et al.
 \cite{abgrall-arxiv,wissocq2025}
to our purposes.

\subsubsection{Modified Sod's shock tube}

In the first test for the Euler equations, we consider a
1D Riemann problem corresponding to a modification
\cite[Sec.~6.4]{toro2009} of Sod's shock tube problem
\cite{sod1978}. The modified version is more challenging
because a sonic point occurs within the rarefaction wave
region. As a consequence, numerical methods that lack
entropy stability tend to produce entropy shocks.

The constant initial states of the modified Sod problem are given by
\begin{align*}
(\rho_0,v_0,p_0)(x)=\begin{cases}
(1,0.75,1) &\text{if } x < 0.25,\\
(0.125,0,0.1)&\text{otherwise.}
\end{cases}
\end{align*}
The left boundary of the domain $\Omega=(0,1)$ is a supersonic
inlet, at which the values of the primitive variables are
determined by the left initial state of the Riemann problem.
The boundary data $\hat u$ for $x=1$ may
be set equal to the right initial state, since the final
time $t=0.2$ is too short for the waves to reach the right
boundary. The exact entropy solution features a
rarefaction, a contact, and a shock wave. Its derivation
can be found in ~\cite[Ch.~4]{toro2009}.

We solve this test problem with the target scheme and its slope-limited counterpart using $N_h=129$ degrees of freedom per variable and $p\in\{1,2\}$.
The results displayed in \cref{fig:modsod-a,fig:modsod-b} are in good agreement with the vanishing viscosity solution.
Similarly to the test problem considered in \cref{sec:1dkpp}, the use of
$\IP_1$ elements and fine grids is the best option in this example.
While higher-order approximations on coarser grids perform well for
rarefaction waves, they are typically more diffusive at discontinuities.

\begin{figure}[ht!]
\centering
\begin{subfigure}[b]{0.32\textwidth}
\caption{Target, no limiting}\label{fig:modsod-a}
\includegraphics[width=\textwidth]{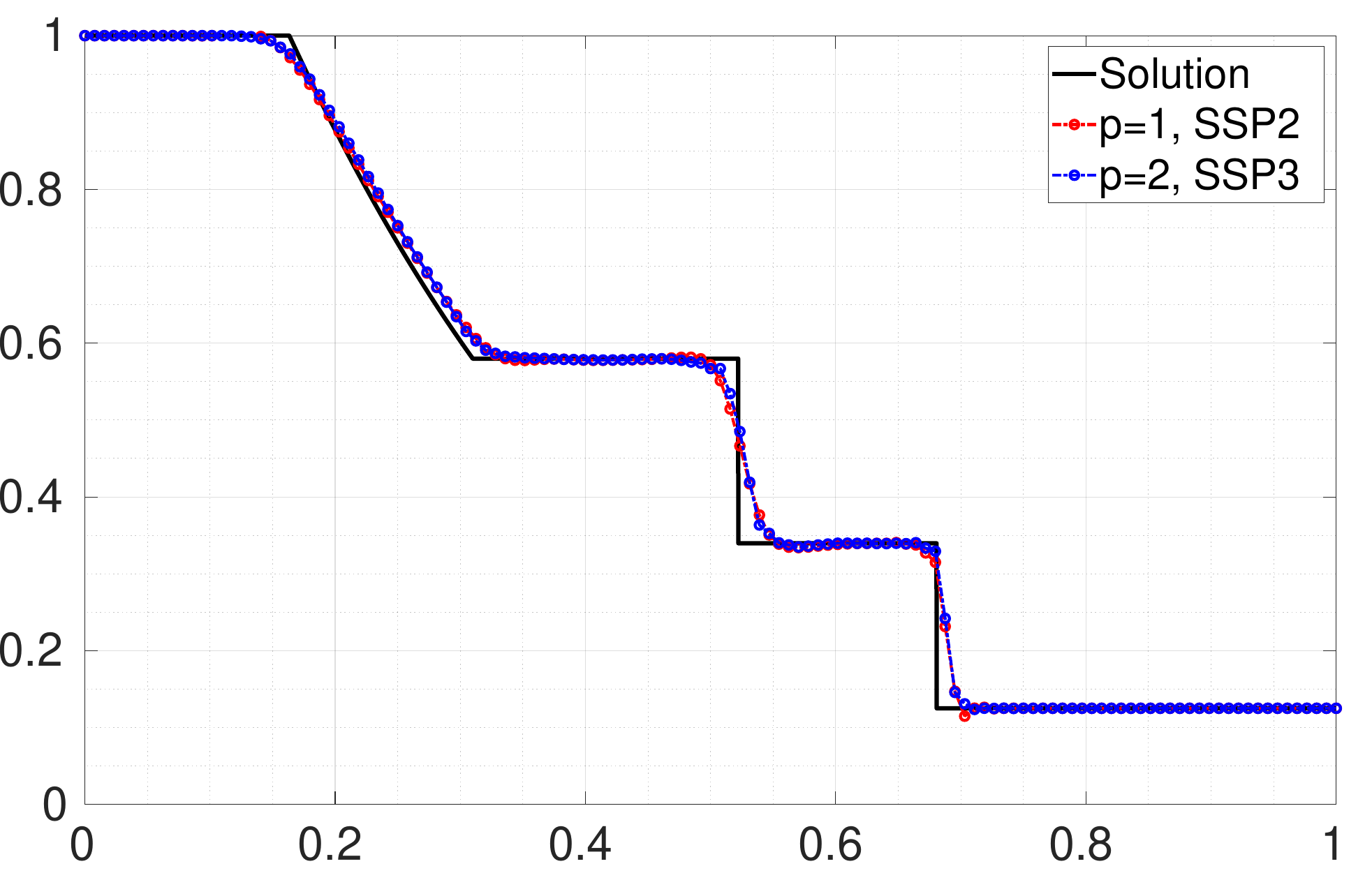}
\end{subfigure}
\begin{subfigure}[b]{0.32\textwidth}
\caption{Slope-limited target}\label{fig:modsod-b}
\includegraphics[width=\textwidth]{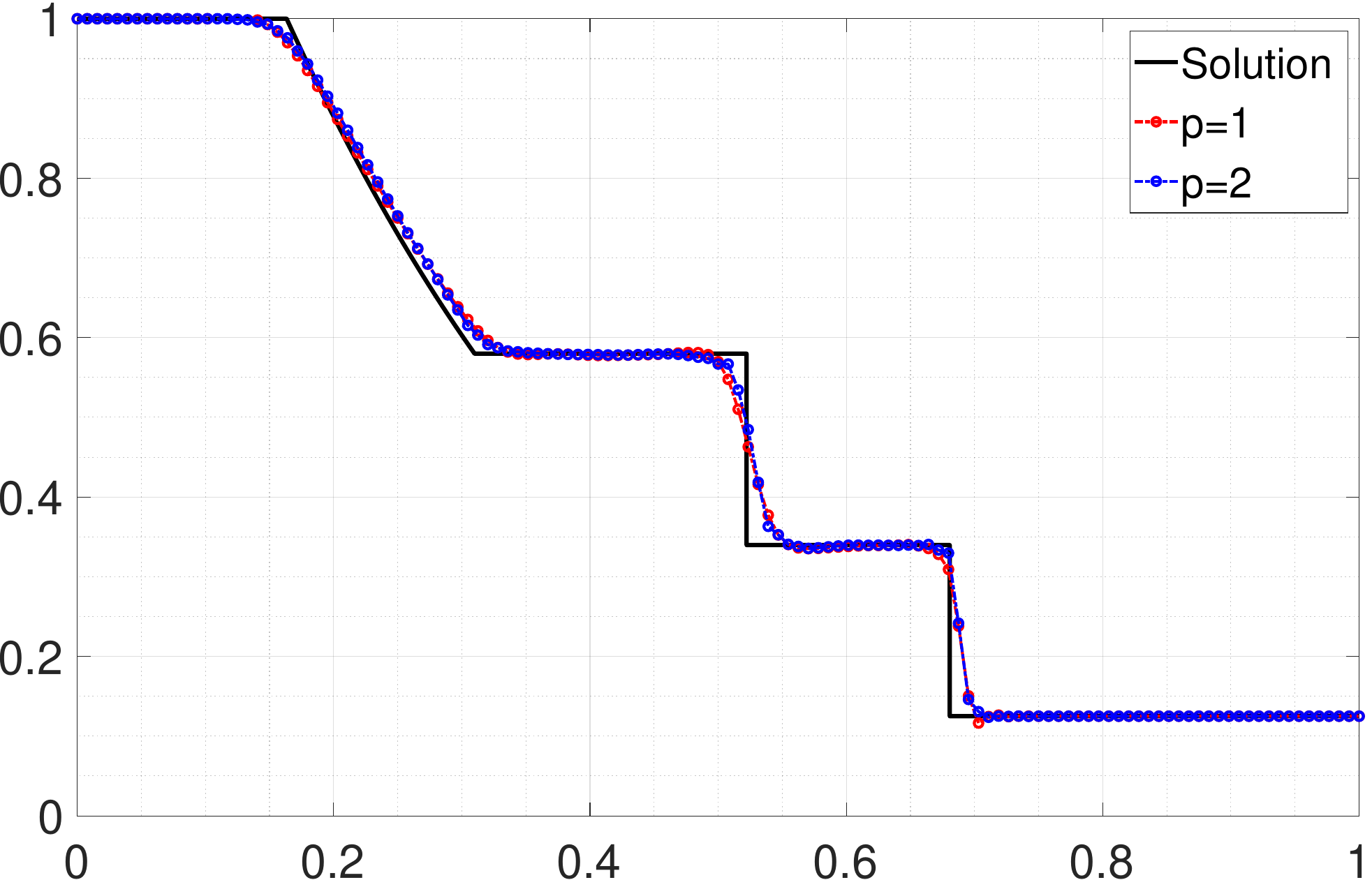}
\end{subfigure}
\begin{subfigure}[b]{0.32\textwidth}
\caption{ $\eta_\Omega (u_h)$ vs. $t$}\label{fig:modsod-c}
\includegraphics[width=\textwidth]{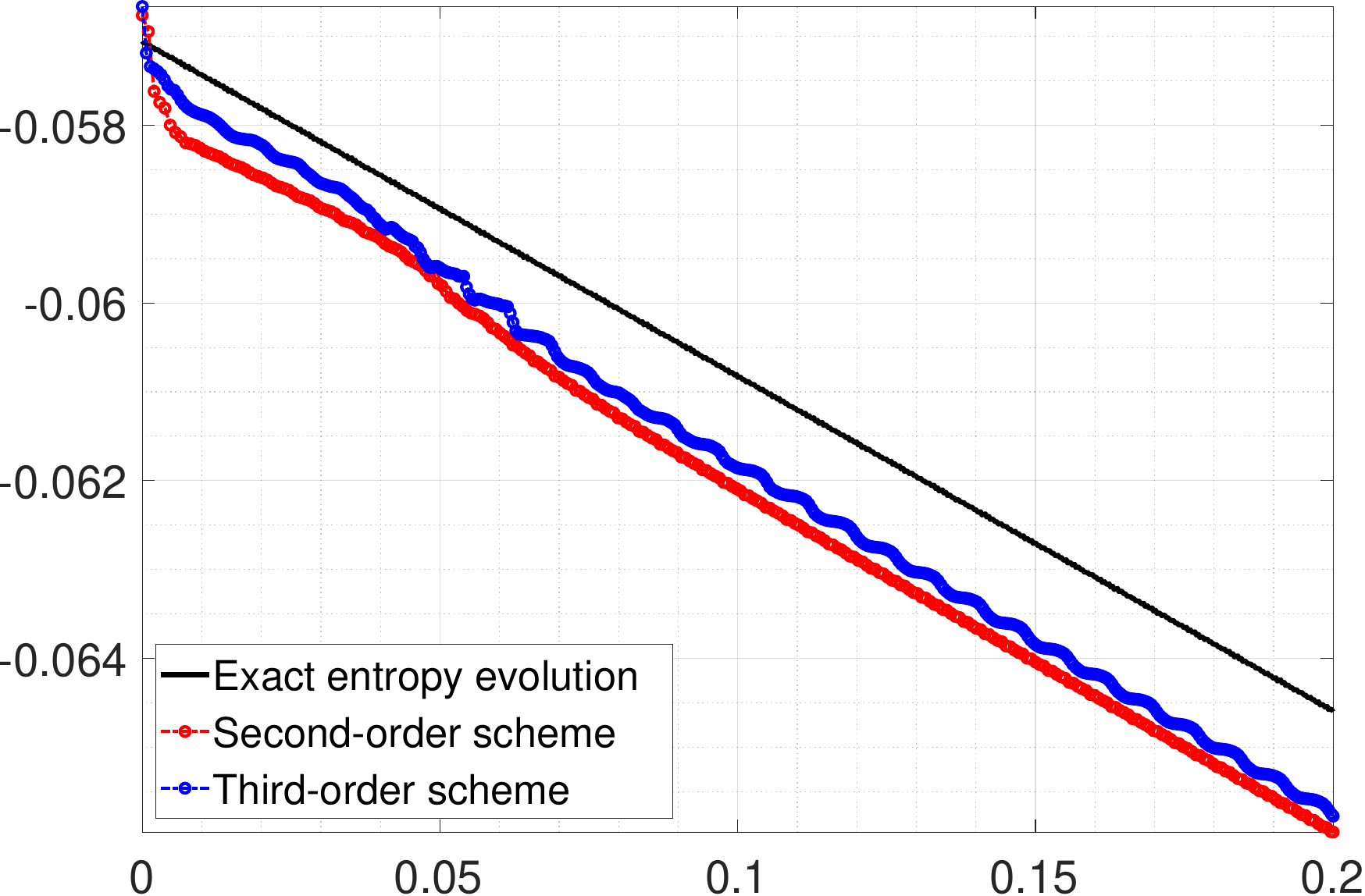}
\end{subfigure}
\caption{Modified shock tube of Sod, density at $t=0.2$ (a,b) and entropy evolution (c) computed with $N_h=129$ unknowns per variable.}
\end{figure}

In \cref{fig:modsod-c}, we plot the evolution of
$\eta_\Omega(u)=\frac{1}{1-\gamma}
\int_\Omega\rho s(u)\D x$, where $s(u) = \log\l p\rho^{-\gamma} \r$
is the specific physical entropy. In contrast to the test case
considered in \cref{sec:1dkpp}, the entropy flux $q(u)=\rho v s/(1-\gamma)$
vanishes at the boundaries, as long as the states $u(0,t)$ and
$u(1,t)$ remain equal to the corresponding initial states of
the Riemann problem. On the time interval for which this is the
case, the total entropy $\eta_\Omega(u)$ of the exact vanishing
viscosity solution $u$ is subject to change only due to entropy
losses at the shock and/or contact. Numerically, we observe a
general trend of entropy dissipation but $\eta_\Omega(u_h)$ does
not decrease monotonically. A probable cause of this behavior
is the lack of local entropy stability, which can be cured by
adding entropy viscosity or using limiter-based entropy fixes
\cite{kuzmin2020e}. However, the small wiggles in the evolution
of $\eta_\Omega(u_h)$ are no major concern, since no entropy
shocks are observed and $\eta_\Omega(u_h)\le\eta_\Omega (u)$.
As demonstrated already in \cite{kuzmin2023a} in the context
of scalar problems, the WENO stabilization term of our method
dissipates enough entropy in most cases. Although
additional entropy stabilization may still be appropriate
for some scalar problems (such as the challenging but artificial KPP
test in Section~\ref{sec:kpp}), we have never observed
entropy-violating behavior of IDP finite element schemes for
the Euler equations.

\subsubsection{Woodward--Colella blast wave problem}\label{sec:blast}

The 1D blast wave problem~\cite{woodward1984} is a challenging test for the Euler equations due to strong jumps in the piecewise-constant initial pressure
\begin{align*}
p_0(x)=\begin{cases}
1000 & \text{if } 0<x<0.1,\\
0.01 & \text{if } 0.1<x<0.9,\\
 100 & \text{if } 0.9<x<1.
\end{cases}
\end{align*}
The initial density is $\rho_0\equiv 1$,  and the fluid is at rest\ie $v_0\equiv 0$.
The domain $\Omega=(0,1)$ has reflecting boundaries at both end points.
For short time intervals, the exact solution can be obtained as for any Riemann problem of the Euler equations~\cite[Ch.~4]{toro2009}.
However, by the final time $t=0.038$, interactions with the boundaries and collisions of waves generated by the two Riemann problems result in the unavailability of an exact solution.

We solve this test problem numerically using $N_h=1001$ degrees of freedom per variable and $p\in\{1,2\}$. The density profile calculated with the target scheme is shown in \cref{fig:wc-a}. Since no slope limiting is performed in this experiment, some negative pressures arise during the simulation run. The code does not crash only because we calculate the speed of sound using the formula $c=\sqrt{\max\{0,\gamma p/\rho\}}$. This  practice is not to be recommended for non-IDP schemes, since it gives rise to consistency errors and disguises unacceptable pressure behavior. Due to round-off errors, a numerical implementation of an IDP scheme might also produce negative pressures of the order of machine precision. In this case, setting $c\coloneqq0$ is an acceptable way to avoid complex-valued solutions. However, such situations did not occur in simulations in which we used the density limiter and the pressure limiter proposed by Abgrall \etal\cite{abgrall-arxiv}. The slope-limited density profiles shown in \cref{fig:wc-b} are nonoscillatory, and no negative pressures were generated at any time.

Interestingly enough, the $\IP_2$ version captures the right density peak slightly better than the $\IP_1$ version despite the fact that the initial condition is piecewise constant. This observation further motivates the use of high-order methods for applications in which complex flow patterns are expected to arise.

\begin{figure}[ht!]
\centering
\begin{subfigure}[b]{0.49\textwidth}
\caption{Target, no limiting}\label{fig:wc-a}
\includegraphics[width=\textwidth]{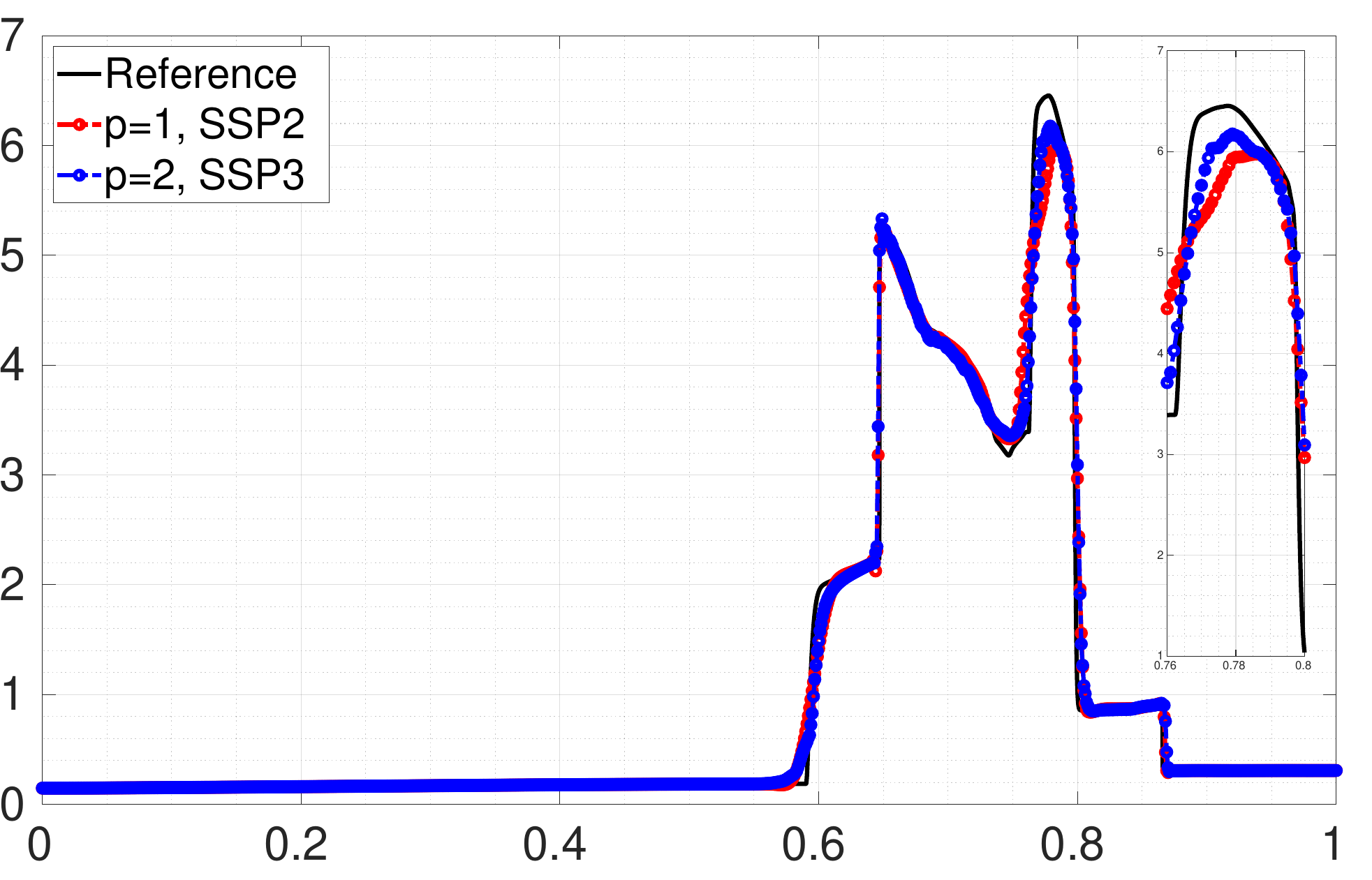}
\end{subfigure}
\begin{subfigure}[b]{0.49\textwidth}
\caption{Slope-limited target}\label{fig:wc-b}
\includegraphics[width=\textwidth]{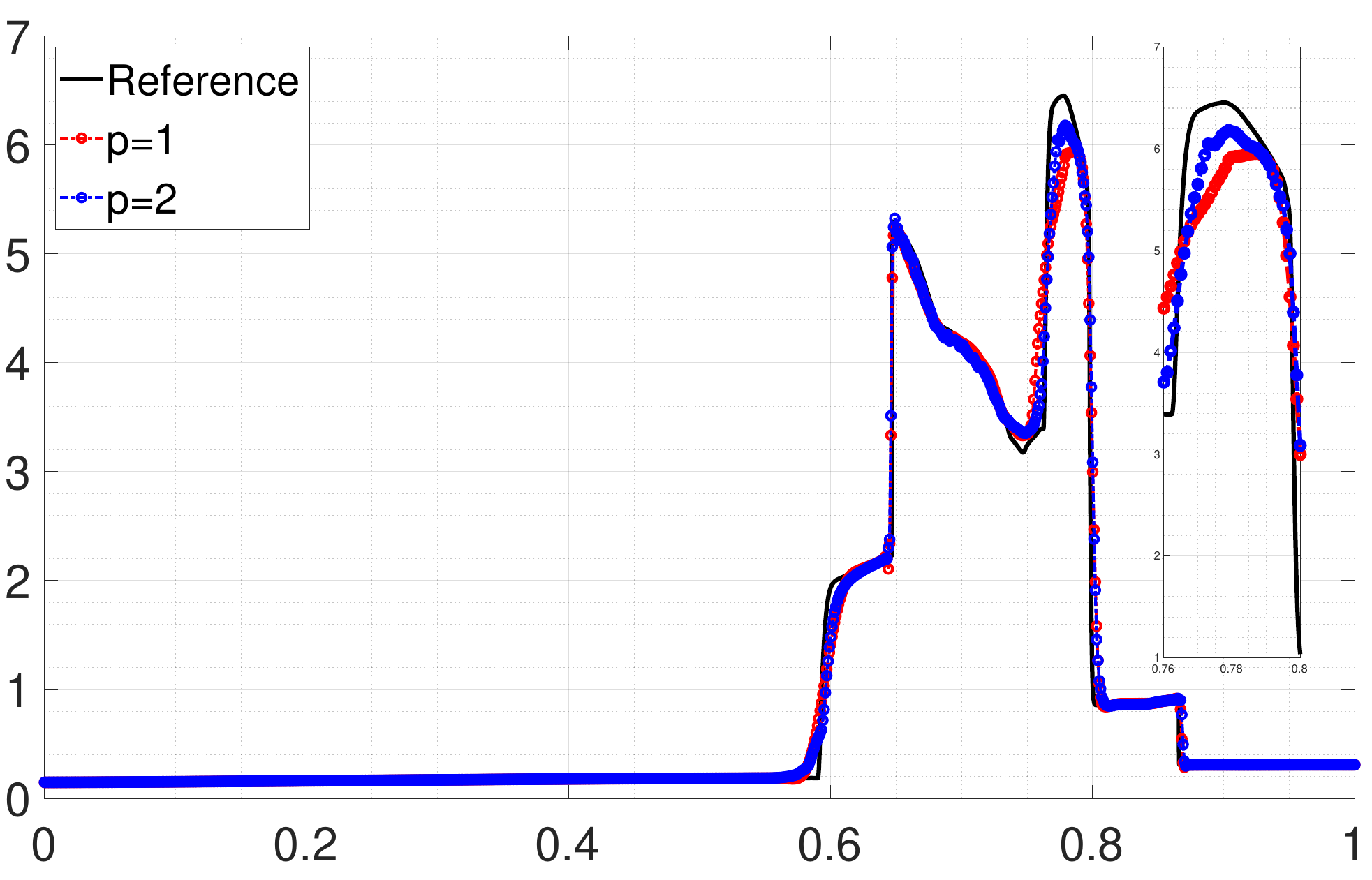}
\end{subfigure}
\caption{Density profiles for the 1D blast wave problem at $t=0.038$
  computed with $N_h=1001$.}
\end{figure}

\subsubsection{Shu--Osher test / sine-shock interaction}

Next, we perform simulations for the popular
Shu--Osher test problem~\cite{shu1989}, which uses the initial condition
\begin{align*}
(\rho_0,v_0,p_0)(x) = \begin{cases}
(3.857143, 2.629369, 10.33333) &\text{if } x < -4, \\
(1+0.2\sin(5x), 0, 1) & \text{otherwise}
\end{cases}
\end{align*}
for the 1D Euler equations to be solved in the
domain $\Omega =(-5,5)$. Up to the final time
$t=1.8$ that is typically used for this test,
both boundary conditions can be defined using a
Riemann solver with the external state $\hat u=u$.
Reference solutions exhibit a right-propagating shock wave with a post-shock region in which the density is highly oscillatory but smooth.

We test the methods under investigation using $p\in\{1,2\}$ and
$N_h=501$ degrees of freedom per variable. Since the
exact solution can be captured well by the
target scheme, no violations of IDP constraints occur
during the simulation run without the slope limiter.
Therefore, the density and pressure fixes are not
activated in practice, and the results coincide with
those obtained without limiting (see \cref{fig:shuosher}).

It is quite encouraging to see that the $\IP_2$ version captures the post-shock oscillatory region much better than the $\IP_1$ approximation on a mesh with twice as many cells.
Furthermore, as is the case for the 1D blast wave problem studied in \cref{sec:blast}, limiting \wrt global bounds does not lead to severe peak clipping effects.
However, one can easily think of situations in which this issue is likely to occur, \eg in simulations of near-vacuum states with quadratic ($\IP_2$) or higher order Bernstein elements. As long as the quantities to be limited remain sufficiently far away from the global bounds of IDP constraints, optimal convergence can be achieved with high-order Bernstein elements. In the context of AFC schemes for the advection equation, numerical evidence for 
the validity of this claim was provided in \cite[Sec.~7.1]{hajduk2020b}.

\begin{figure}[ht!]
\centering
\begin{subfigure}[b]{0.49\textwidth}
\caption{Target, no limiting}
\includegraphics[width=\textwidth]{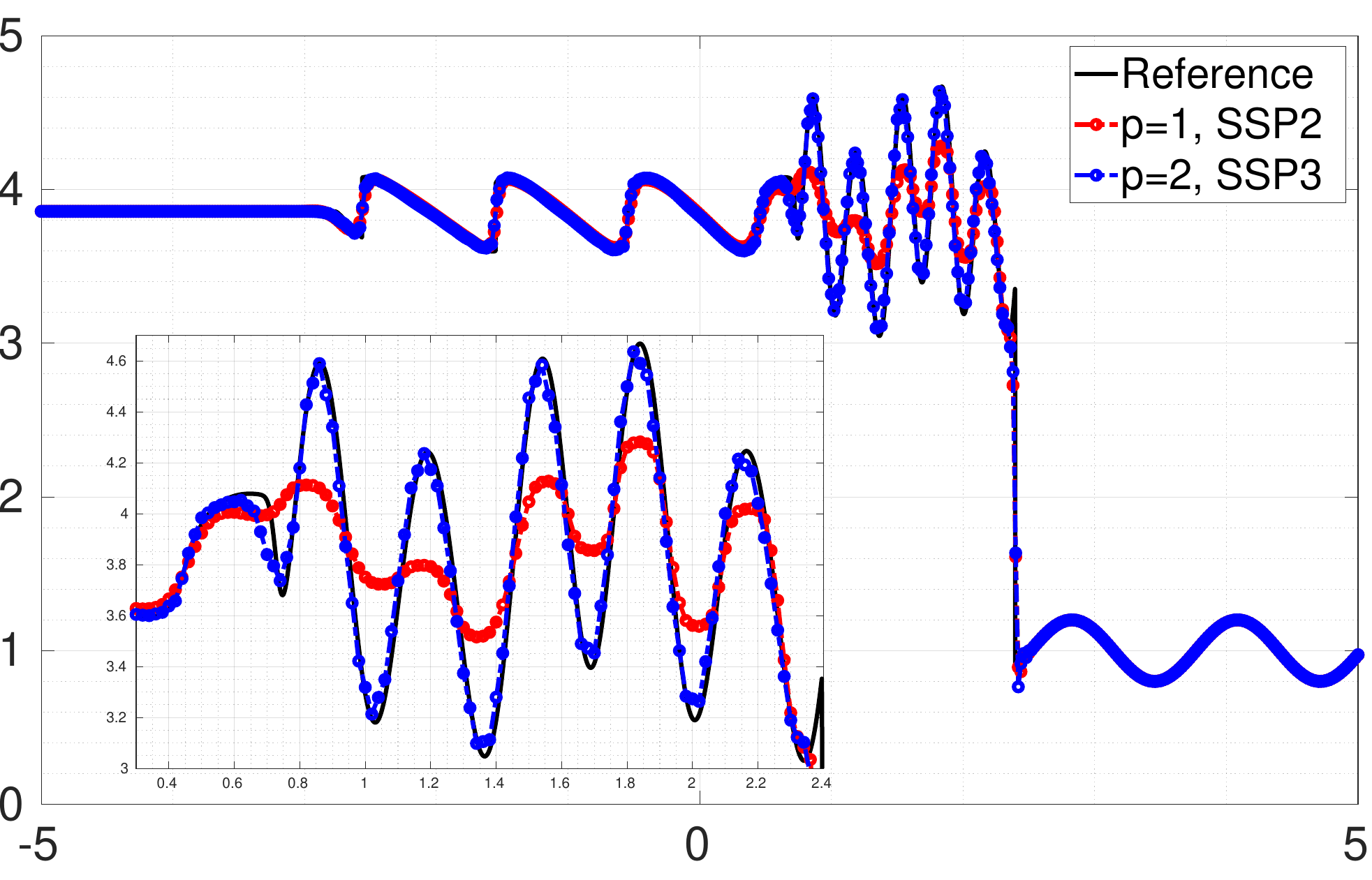}
\end{subfigure}
\begin{subfigure}[b]{0.49\textwidth}
\caption{Slope-limited target}
\includegraphics[width=\textwidth]{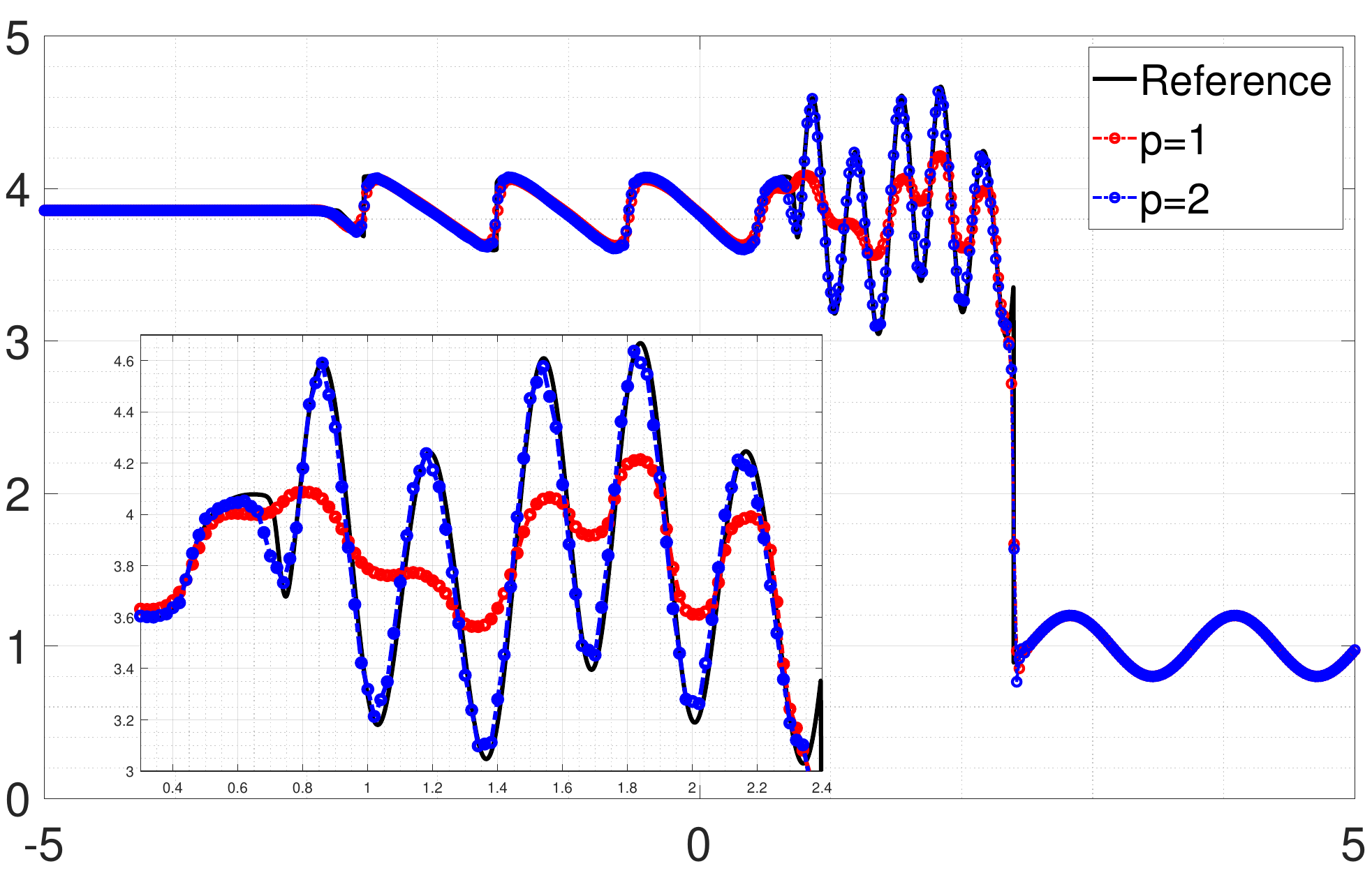}
\end{subfigure}
\caption{Density profiles for the Shu--Osher test at $t=1.8$ computed with $N_h=501$.}\label{fig:shuosher}
\end{figure}

\subsection{Scalar problems in 2D}
\label{sec:scalar2D}

To evaluate the numerical behavior of the limited CG-WENO scheme and its
components in two space dimensions, we consider two additional scalar
test problems, one of which is linear and the other one is nonlinear.
In both cases, we restrict our studies to $\Q_1$ finite
element approximations, for which the Bernstein and Lagrange basis functions
coincide. Computations are performed on uniform meshes with
spacing $h=\frac{1}{128}$. We use the \red{pseudo} time step
$\Delta t_e=\Delta t_e^{\max}$ defined by \eqref{dtmaxcell} and
the global time step $\Delta t=10^{-3}$, which satisfies
the CFL condition \eqref{cflglob} for both test problems.
Although our choice of $\Delta t_e$ exceeds the threshold
of Lemma~\ref{lemma1}, the high-order intermediate cell
averages stay in bounds. Therefore, we
deactivate the flux limiter for~\eqref{uelim}
by setting $\alpha_{ee'}\coloneqq 1\ \forall
e\in\Be$, as proposed in Remark~\ref{rem:timestep}.

Since second-order accuracy is the best we can hope to
achieve with the $\Q_1$ version, we replace the element
contributions \eqref{aec} by
\begin{align}\nonumber
  f_i^e&=\gamma_e\Big(
  m_i^e(u_i-u^e)\\
  &\phantom{\gamma_i^e\Big(}-\Delta t_e\left[
  \int_{K_e}\left(\varphi_i-\frac{m_i^e}{|K_e|}\right)
  \nabla\cdot\mathbf f_h\dx+
  \int_{K_e}\varphi_i(\dot u_h^L-\dot u_i^L)\dx
  \right]\Big),  \label{aecQ1}
\end{align}
where $\dot u_h^L$ denotes a low-order approximation to
the time derivative $\dot u_h$.
This choice is adopted because it corresponds to the second-order
target employed in \cite{kuzmin2020}. A decrease in the value of
$\gamma_e\in[0,1]$
 on the right-hand side of \eqref{aecQ1}
increases the levels of low-order Rusanov dissipation
\eqref{rusdiss}, while decreasing the levels of
high-order stabilization via the terms depending
on $\Delta t_e$. 

In our presentation of numerical results for two-dimensional
hyperbolic problems, the
methods under investigation are labeled as follows:
\begin{itemize}
\item LO: low-order scheme corresponding to $\gamma_e=\beta_e=0$;
\item HO: high-order scheme corresponding to $\gamma_e=\beta_e=1$;
\item WENO: constrains \eqref{aecQ1} using
  $\gamma_e$ defined by \eqref{WENO:alpha}, sets $\beta_e=1$;
\item WENO-L: constrains \eqref{aecQ1} using
  $\gamma_e$ and $\beta_e$ defined by \eqref{WENO:alpha}\\
  and \eqref{slimit1},\eqref{slimit2}
  with global bounds, respectively.
\end{itemize}
In the WENO version, we use the linear weights $w_l^{e,\rm lin}=0.2$ for
$l=1,\ldots,n_e$ and set $w_0^{e,\rm lin}=1-\sum_{l=1}^{n_e}w_l^{e,\rm lin}$.
To study the dependence of the results on the parameter $q\ge 1$
of formula \eqref{WENO:alpha}, we run simulations for $q\in\{3,5,10\}$.
The range of approximate solutions is reported in the figures
to quantify the levels
of numerical diffusion and the magnitude of undershoots/overshoots.

\subsubsection{Solid body rotation}

The solid body rotation problem introduced by LeVeque \cite{leveque1996}
is widely used to assess the ability of numerical methods to capture
smooth and discontinuous shapes of an exact solution to the linear
advection equation
$$
  \pd{u}{t}+
  \nabla\cdot(\mathbf{v}u)=0\quad\mbox{in}\  
\Omega=(0,1)^2
$$  
with the velocity field ${\bf v}(x,y)=(0.5-y,x-0.5)^\top$. The initial
and boundary conditions for this test are defined in
\cite{kuzmin2020,leveque1996}. We stop computations at the
final time $t=2\pi$, at which the exact solution coincides
with the initial data. The invariant domain to be preserved
is the interval $\mathcal G=[0,1]$.

\begin{figure}[h!]
  \small
  
  \begin{minipage}[t]{0.5\textwidth}
\centering (a)  LO, $u_h\in[0.0,0.39]$
\vskip0.2cm

\includegraphics[width=0.95\textwidth,trim=0 0 0 0,clip]{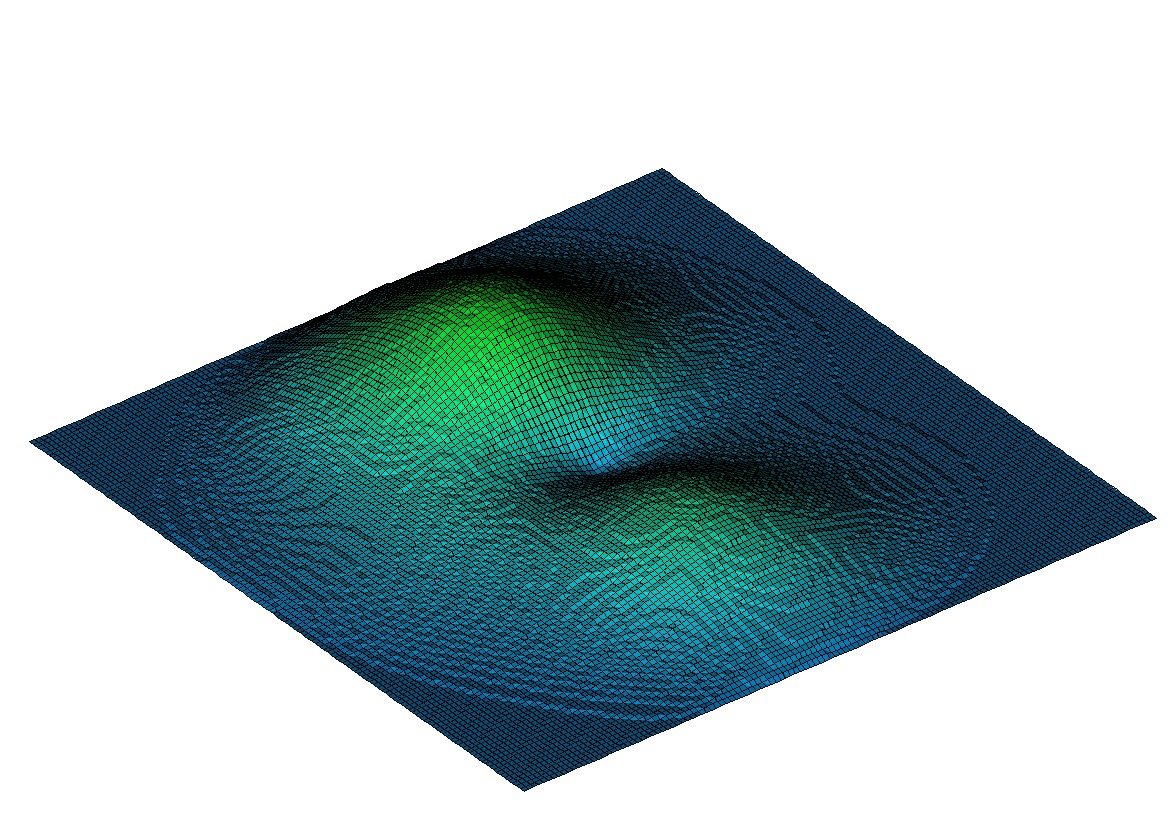}

\end{minipage}%
\begin{minipage}[t]{0.5\textwidth}

  \centering (b) HO, $u_h\in[-0.06,1.12]$
  \vskip0.2cm
  
\includegraphics[width=0.95\textwidth,trim=0 0 0 0,clip]{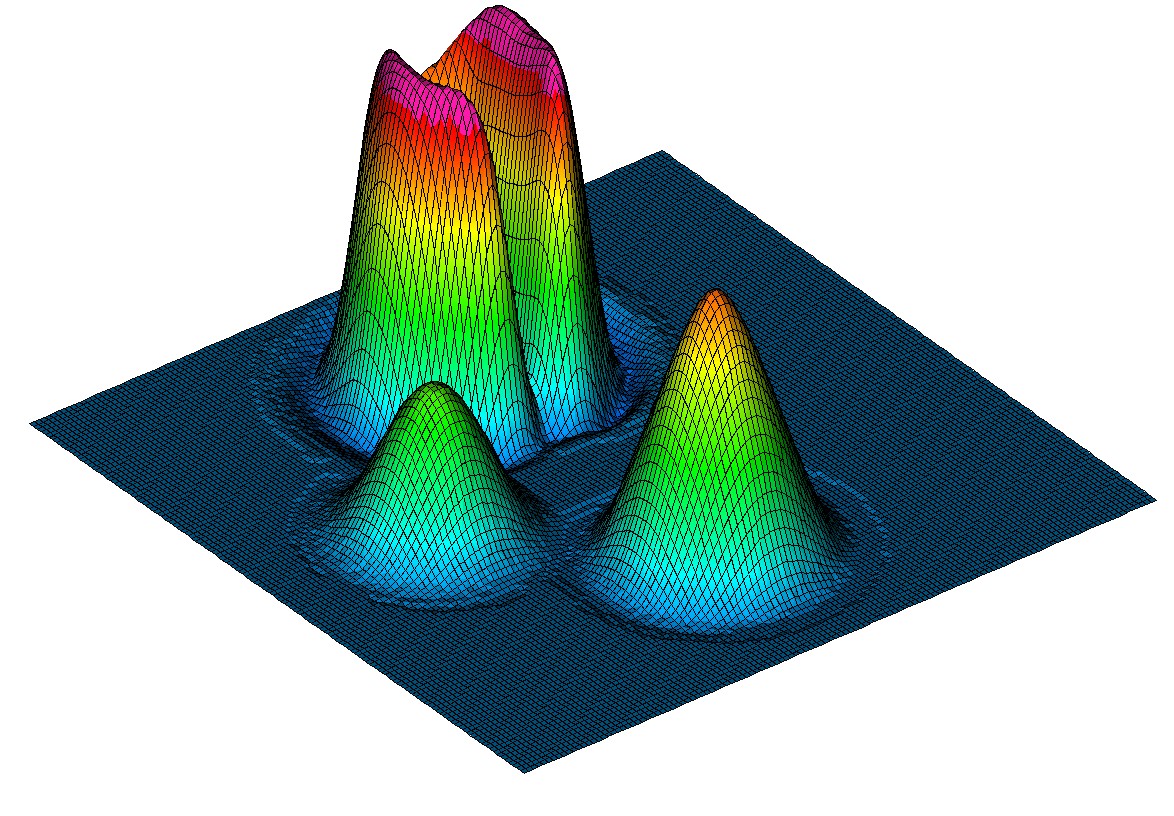}

\end{minipage}

\vskip0.35cm

\begin{minipage}{0.5\textwidth}
\centering (c) WENO, $q=3$, \vskip0.1cm

$u_h\in[-6.26\mbox{e-}10,0.96672]$\vskip0.1cm

\includegraphics[width=0.95\textwidth,trim=0 0 0 0,clip]{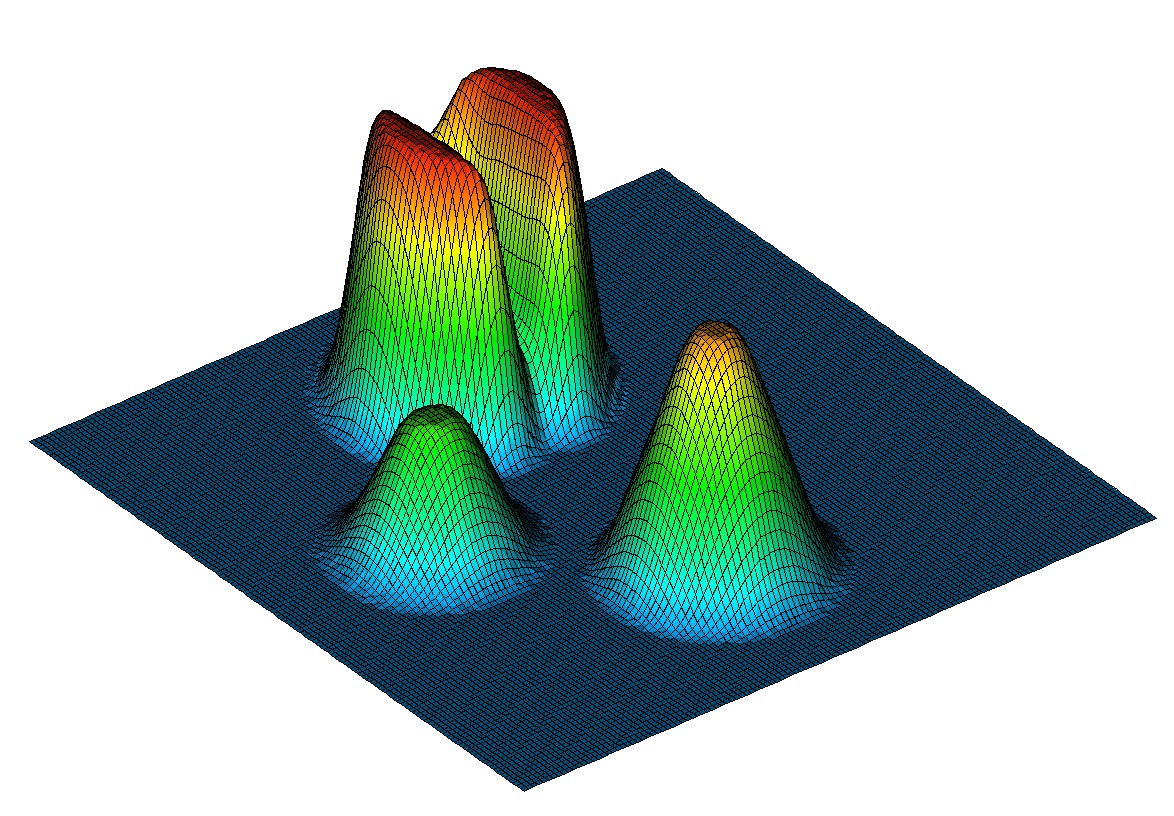}

\end{minipage}%
\begin{minipage}{0.5\textwidth}

\centering (d) WENO, $q=5$,\vskip0.1cm

$u_h\in[-8.80\mbox{e-}5,0.99615]$ \vskip0.1cm

\includegraphics[width=0.95\textwidth,trim=0 0 0 0,clip]{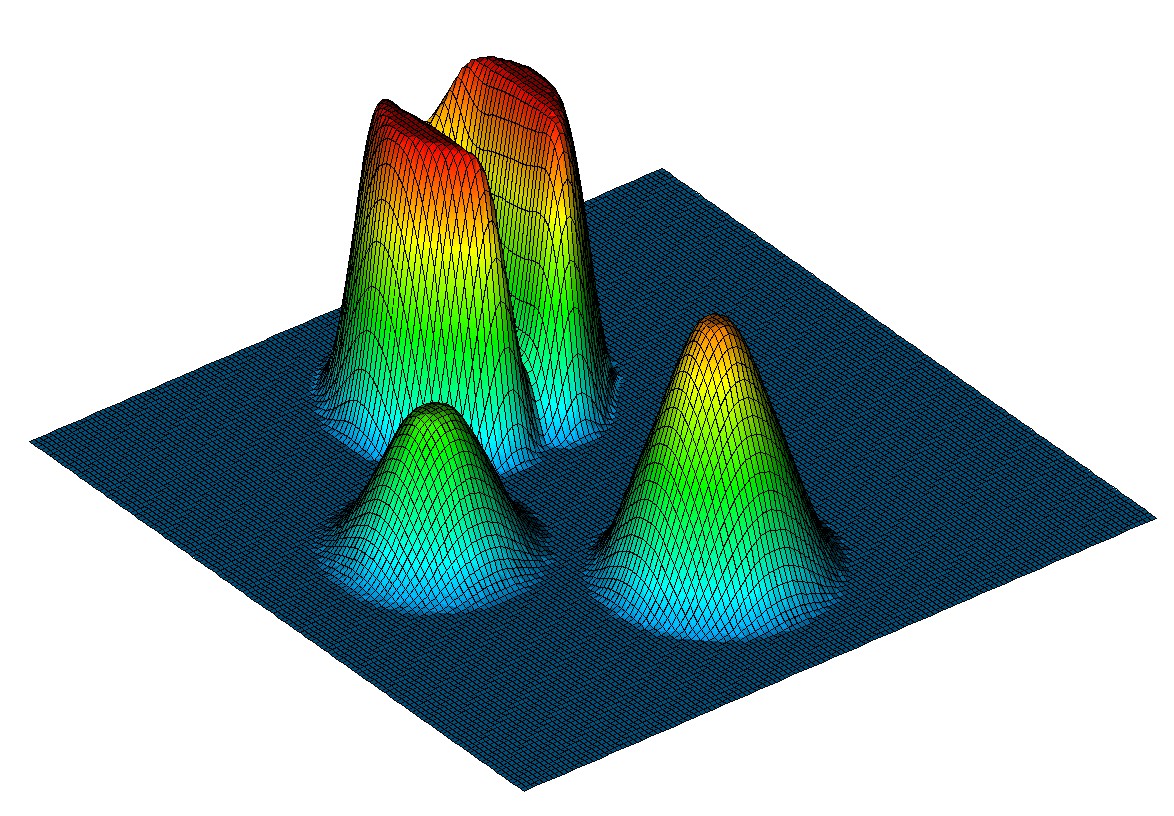}

\end{minipage}

\vskip0.35cm

\begin{minipage}{0.5\textwidth}
\centering (e) WENO, $q=10$, \vskip0.1cm

$u_h\in[ -7.03\mbox{e-}3,1.03601]$ \vskip0.1cm

\includegraphics[width=0.95\textwidth,trim=0 0 0 0,clip]{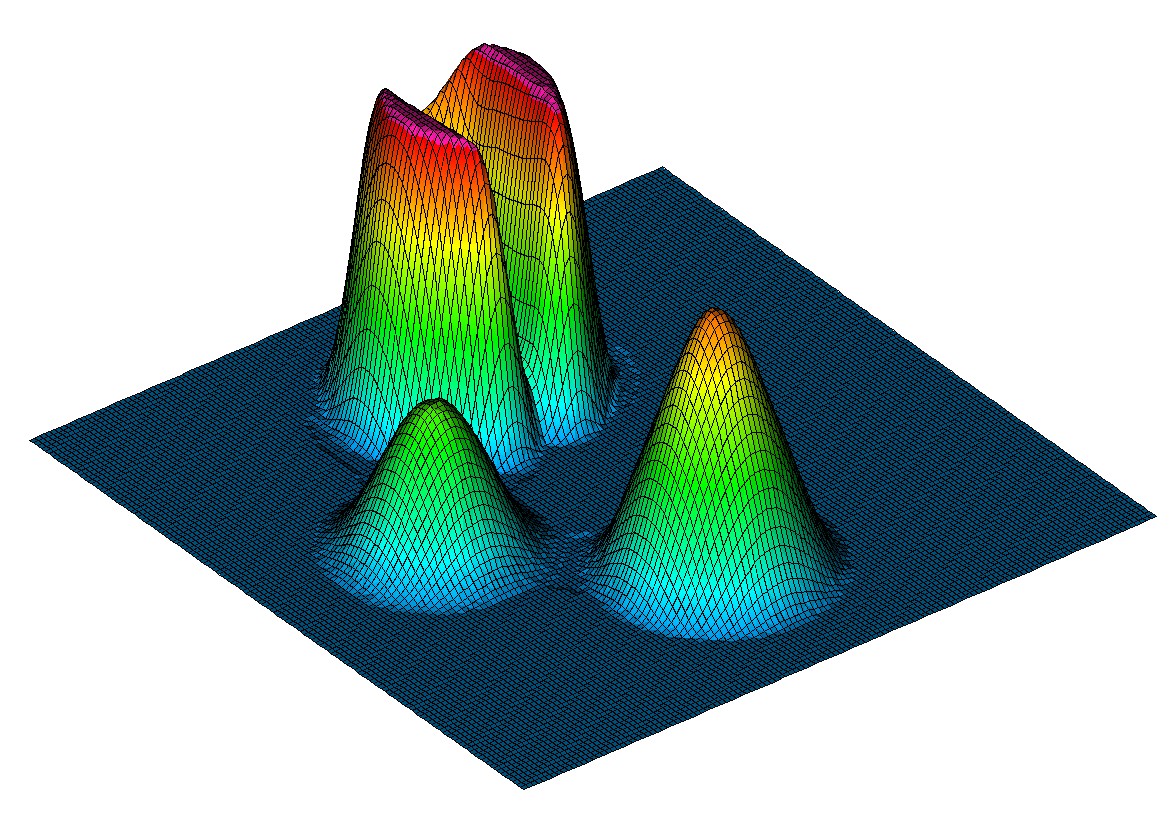}

\end{minipage}%
\begin{minipage}{0.5\textwidth}

\centering (f) WENO-L, $q=10$,\vskip0.1cm

$u_h\in[0.0,0.99989]$ \vskip0.1cm

\includegraphics[width=0.95\textwidth,trim=0 0 0 0,clip]{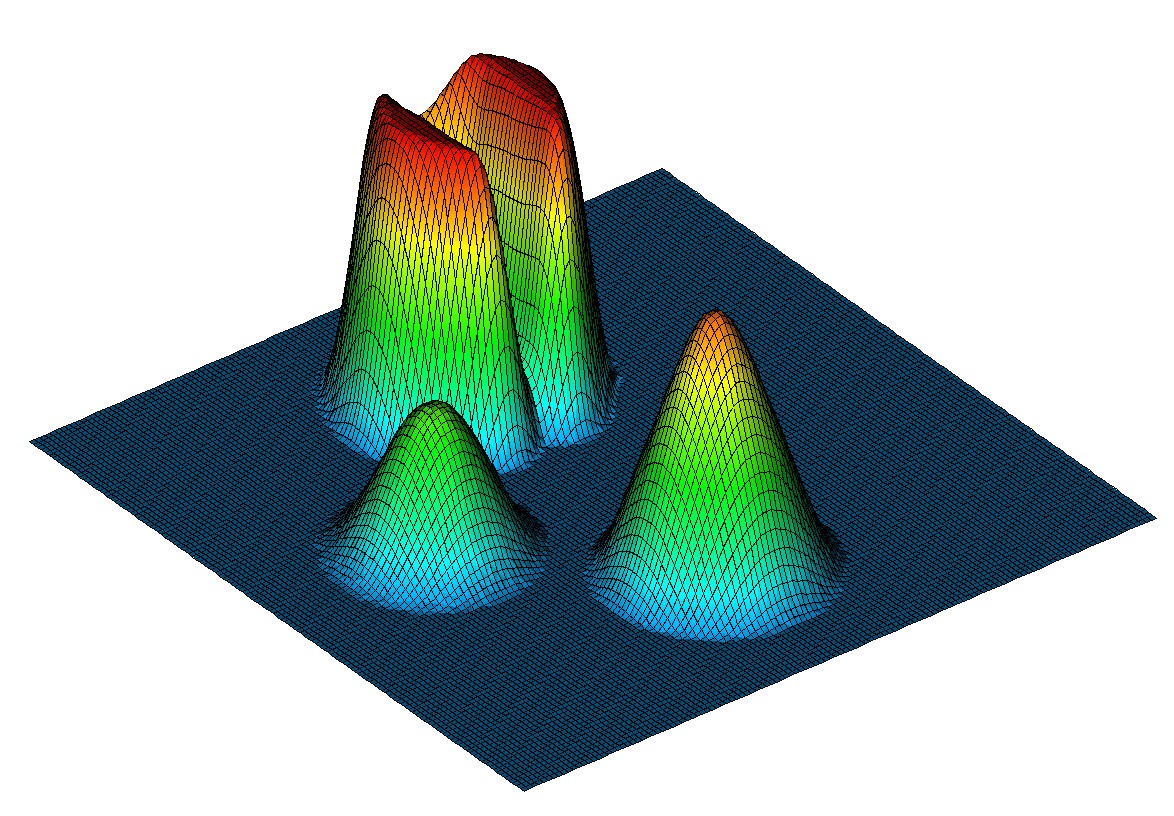}

\end{minipage}

\caption{Numerical solutions to the solid body rotation problem \cite{leveque1996} at $t=2\pi$ calculated using a uniform mesh, $\mathbb Q_1$ elements, $h=\frac{1}{128}$ and $\Delta t=10^{-3}$. }

\label{sbr-pics}

\end{figure}

The numerical results presented in Fig.~\ref{sbr-pics} show
that the low-order (LO) scheme is IDP but too diffusive, while
the high-order (HO) target produces undershoots and overshoots
that are clearly visible even in the eyeball norm. The WENO
results are remarkably accurate and almost IDP for $q=3$. However,
violations of the global bounds $u^{\min}=0$ and $u^{\max}=1$
become more pronounced as the value of $q$ is increased to
$5$ and $10$. The WENO-L solution is both nonoscillatory
and IDP. \red{Hence, our slope limiting strategy eliminates the
risk of failure due to `greedy'
choices of WENO parameters.}

\subsubsection{KPP problem}
\label{sec:kpp}

Extending our experiments to nonlinear scalar equations
of the form \eqref{ibvp-pde} in 2D, we consider the KPP
problem \cite{kurganov2007}, in which $\mathbf{f}(u)=(\sin(u),\cos(u))$
and $\Omega=(-2,2)\times(-2.5,1.5)$. The initial condition is given by
$$
u_0(x,y)=\begin{cases}
\frac{7\pi}{2} & \mbox{if}\quad \sqrt{x^2+y^2}\le 1,\\
\frac{\pi}{4} & \mbox{otherwise}.
\end{cases}
$$
At the final time $t=1$ the exact entropy solution of the KPP
problem exhibits \red{a spiral-shaped shock structure, which is
captured correctly by WENO schemes only for the conservative
choice $q=1$ of the steepening parameter (not shown here).
To avoid convergence to spurious weak solutions for larger
values of $q$, we replace the sensor $\gamma_e$ of
formula \eqref{aecQ1} by $\min\{\gamma_e,\xi_e\}$,} where
$\xi_e$ is an entropy correction factor. Using the nodal
sensors $R_i$ of entropy viscosity stabilization employed
in \cite{guermond2018} and \cite[Eq.~51]{kuzmin2020a}, we set
$\xi_e=\min_{i\in\Ne}R_i$.

Figure~\ref{kpp-pics} shows the entropy-satisfying low-order
solution, the entropy-violating high-order solution, and
the WENO results for the KPP problem. Similarly to the
linear solid body rotation test, the global maximum
and minimum of the initial data are preserved for
$q=3$. The solutions obtained with $q=5$ and $q=10$
without limiting exhibit undershoots and overshoots.
\red{The use of slope limiting in the WENO-L version ensures
the IDP property without having any negative impact
on the accuracy of the results.}

\begin{figure}[h!]
  \small
  
\begin{minipage}[t]{0.5\textwidth}
\centering (a)  LO, $u_h\in[0.785,10.885]$
\vskip0.2cm

\includegraphics[width=0.95\textwidth,trim=0 0 0 0,clip]{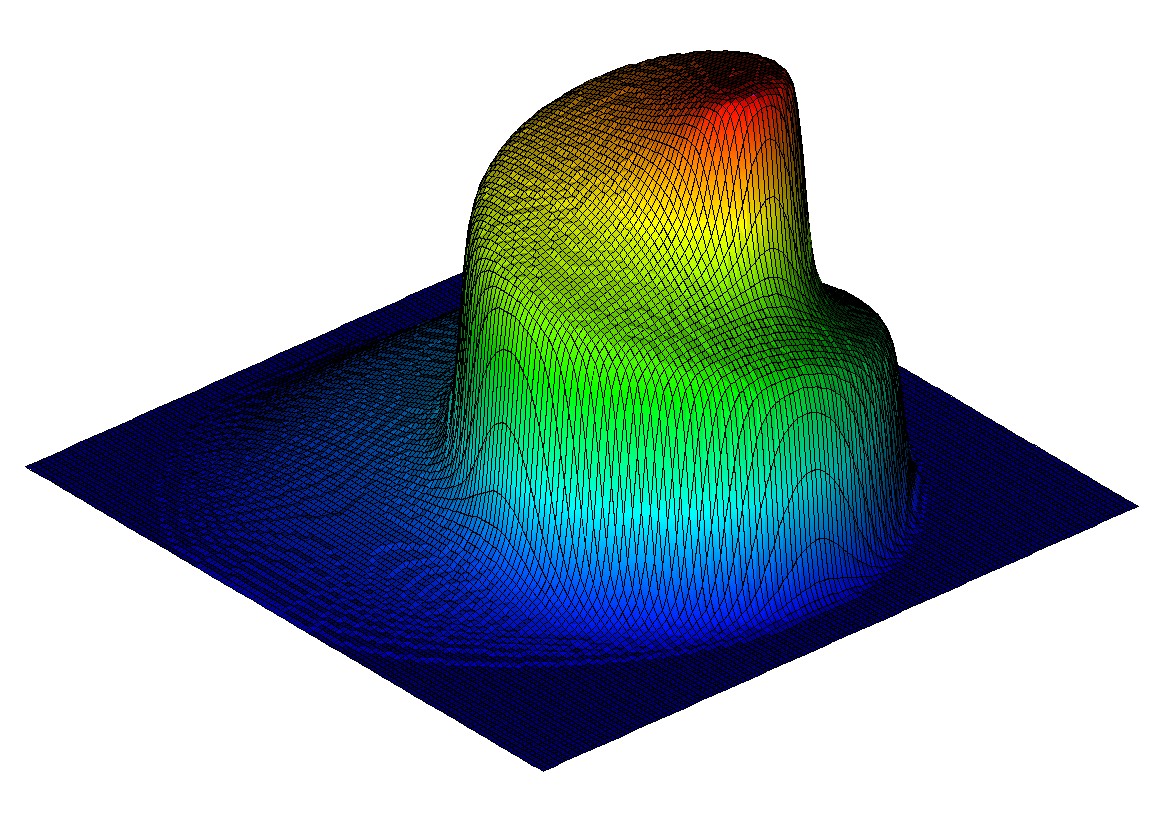}

\end{minipage}%
\begin{minipage}[t]{0.5\textwidth}

  \centering (b) HO, $u_h\in[-2.054,14.547]$
  \vskip0.2cm
  
\includegraphics[width=0.95\textwidth,trim=0 0 0 0,clip]{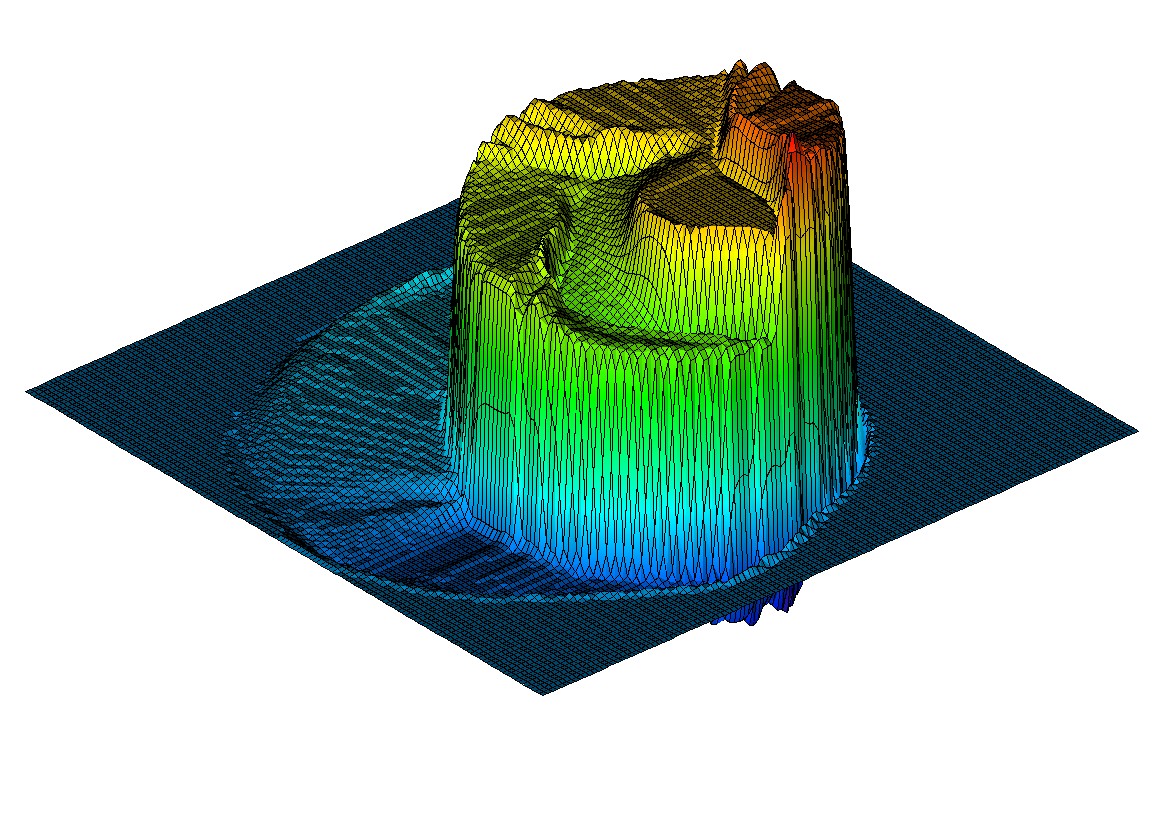}

\end{minipage}

\vskip0.35cm

\begin{minipage}{0.5\textwidth}
\centering (c) WENO, $q=3$, \vskip0.1cm

$u_h\in[0.785,10.997]$\vskip0.1cm

\includegraphics[width=0.95\textwidth,trim=0 0 0 0,clip]{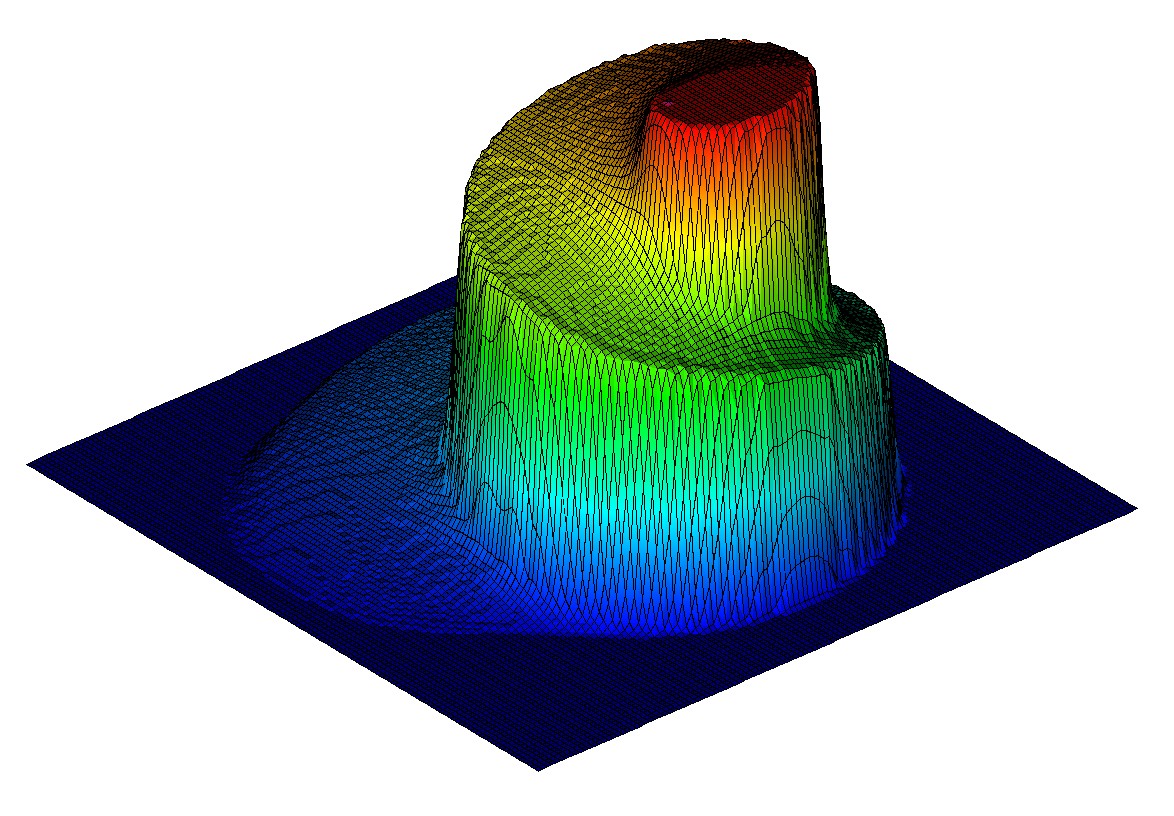}

\end{minipage}%
\begin{minipage}{0.5\textwidth}

\centering (d) WENO, $q=5$,\vskip0.1cm

$u_h\in[0.781,11.006]$ \vskip0.1cm

\includegraphics[width=0.95\textwidth,trim=0 0 0 0,clip]{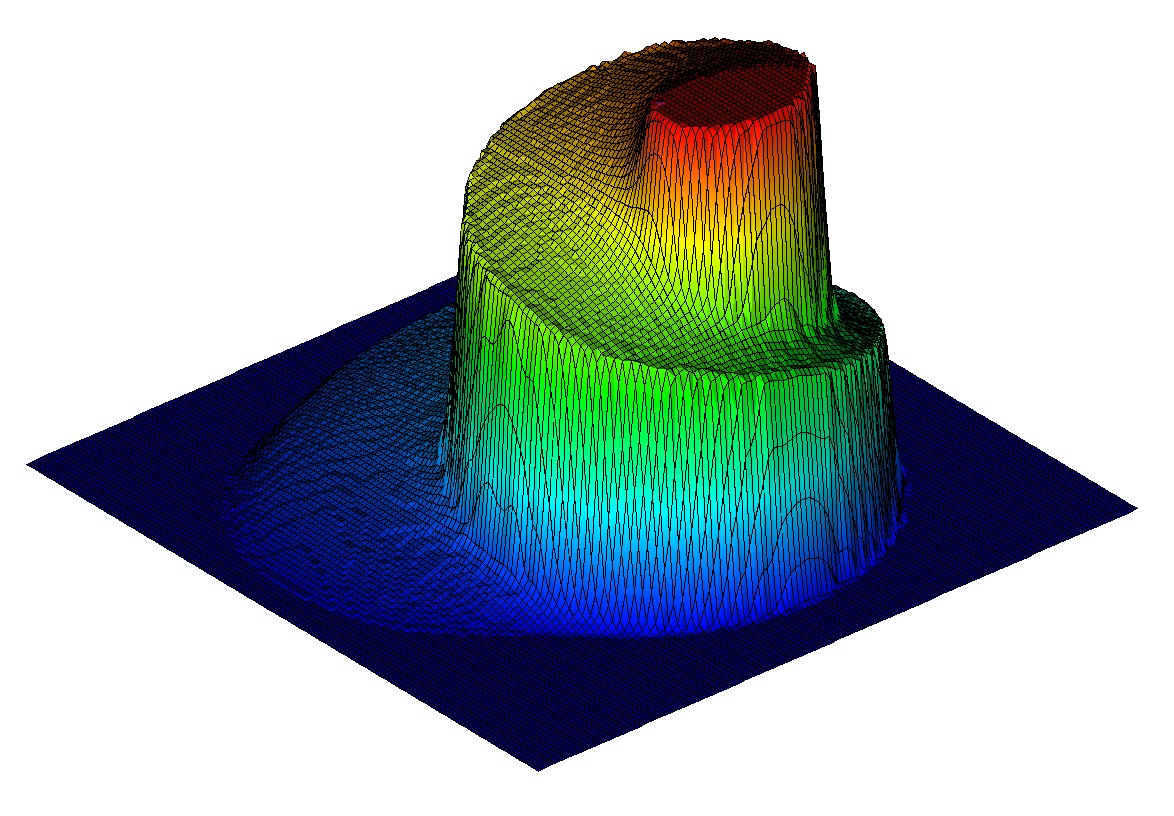}

\end{minipage}

\vskip0.35cm

\begin{minipage}{0.5\textwidth}
\centering (e) WENO, $q=10$, \vskip0.1cm

$u_h\in[0.759,11.050]$ \vskip0.1cm

\includegraphics[width=0.95\textwidth,trim=0 0 0 0,clip]{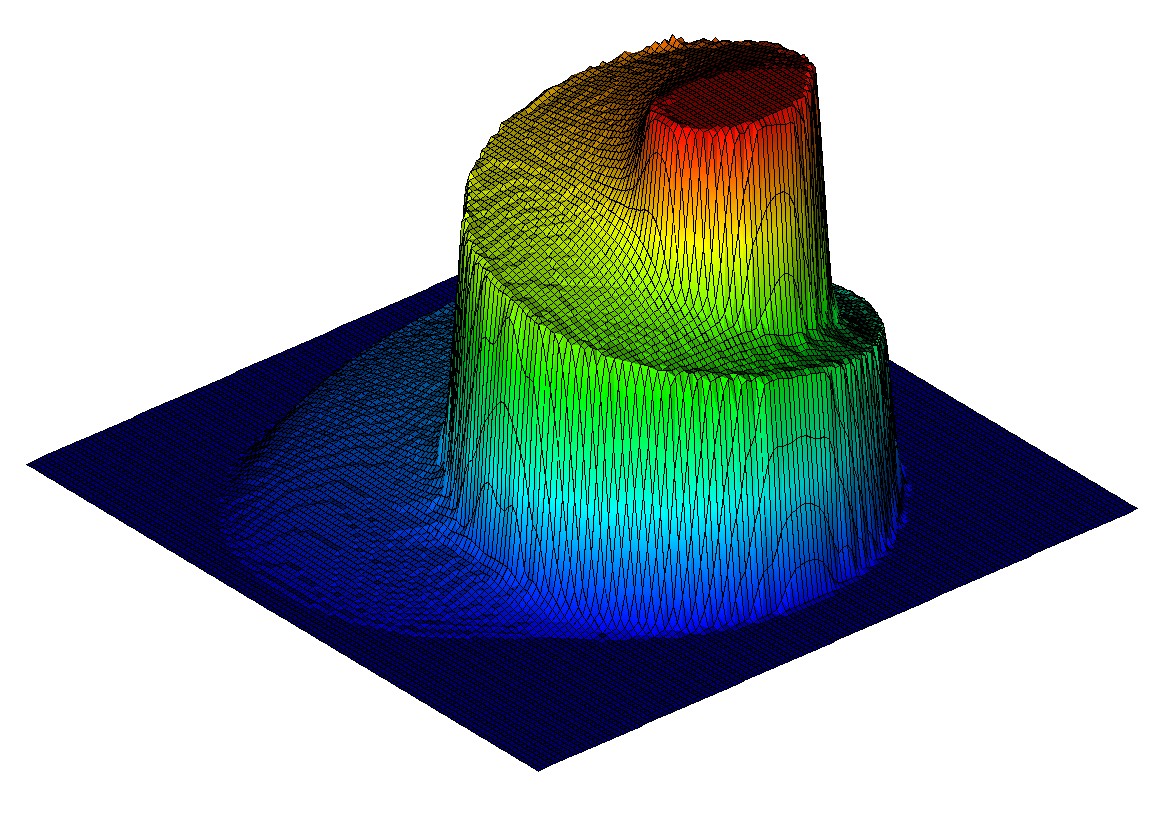}

\end{minipage}%
\begin{minipage}{0.5\textwidth}

\centering (f) WENO-L, $q=10$,\vskip0.1cm

$u_h\in[0.785,10.996]$ \vskip0.1cm

\includegraphics[width=0.95\textwidth,trim=0 0 0 0,clip]{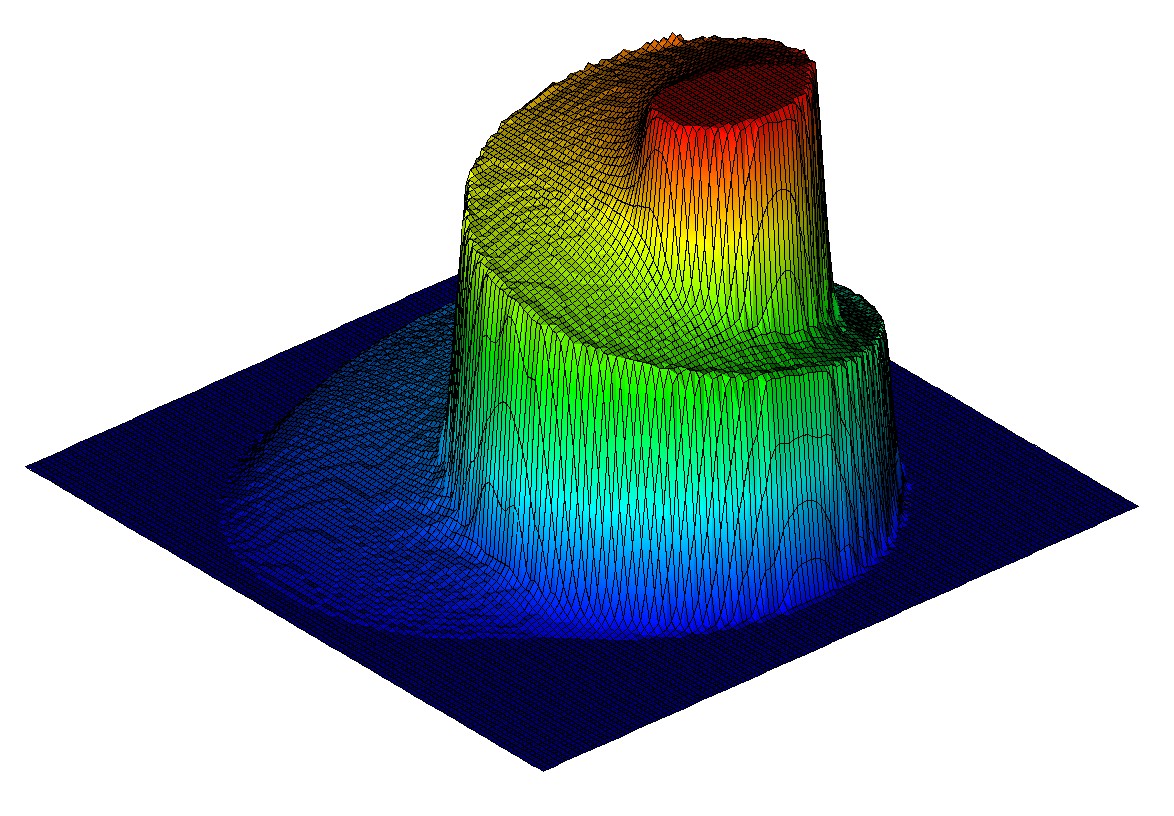}

\end{minipage}

\caption{Numerical solutions to the KPP problem \cite{kurganov2007} at $t=1$ calculated using a uniform mesh, $\mathbb Q_1$ elements, $h=\frac{1}{128}$ and $\Delta t=10^{-3}$. }

\label{kpp-pics}

\end{figure}

\blue{
\subsection{Euler equations in 2D: double Mach reflection}
\label{sec:Euler2Ddmr}

Finally, we assess the performance of our scheme for multidimensional systems by running a 2D test for the compressible Euler equations. We restrict ourselves to computations with the WENO-L version using the full element contributions \eqref{aec}. In contrast to the two-dimensional scalar benchmarks of Sec.~\ref{sec:scalar2D}, we employ the default linear weights in the WENO reconstruction, namely $w_l^{e,\rm lin}=10^{-3}$ for $l=1,\ldots,n_e$ and $w_0^{e,\rm lin}=1-\sum_{l=1}^{n_e}w_l^{e,\rm lin}$. The steepening parameter of the smoothness sensor \eqref{WENO:alpha} is set to $q=1$.

As a benchmark problem, we consider the double Mach reflection test introduced by Woodward and Colella \cite{woodward1984}, which can be regarded as a two-dimensional extension of the blast-wave problem from Sec.~\ref{sec:blast}. It represents a challenging test for multidimensional shock-capturing schemes because excessive diffusion smears the fine-scale features while insufficient stabilization leads to spurious oscillations. The computational domain is $\Omega=(0,4)\times(0,1)$. The boundary consists of a reflecting wall $\Gamma_w=\{(x,0)^\top\in\partial \Omega:1/6\le x \le 4\}$, a supersonic outlet $\Gamma_{\rm out}=\{(4,y)^\top:0<y\le 1\}$, and a supersonic inlet $\Gamma_{\rm in}=\partial \Omega \setminus (\Gamma_w \cup \Gamma_{\rm out})$. The prescribed post-shock and pre-shock states are 
\begin{align*}
	(\varrho_L,v_{x,L},v_{y,L},p_L)&=(8.0,8.25\cos(30^\circ),-8.25\sin(30^\circ),116.5), \\ (\varrho_R,v_{x,R},v_{y,R},p_R)&=(1.4,0.0,0.0,1.0),
\end{align*}
respectively. Initially, the post-shock state is imposed in $\Omega_L=\{(x,y)\;|\;x<\frac{1}{6}+\frac{y}{\sqrt{3}}\}$, and the pre-shock state in $\Omega_R = \Omega \setminus \Omega_L$. To account for the motion of the Mach $10$ shock along the top boundary, time-dependent inflow conditions are prescribed by imposing the post-shock state for $x<\frac{1}{6}+\frac{1+20t}{\sqrt{3}}$ and the pre-shock state elsewhere. The shock-wall interaction produces a Mach $10$ shock impinging on the reflecting wall at an angle of $60^{\circ}$. 

In all simulations, the WENO-L method employs the pressure limiter from \cite{kuzmin2023,kuzmin2020c}, as described in Appendix~A. The pressure fix proposed in \cite{abgrall-arxiv, wissocq2025} produces virtually identical results (not shown here) in this test. No visible differences were observed between the two versions since the pressure correction is activated only in a very small number of cells.

The simulations are run up to the final time $t=0.2$. As DG methods are widely used for challenging hyperbolic problems, we compare our approach with the DG-WENO scheme from \cite{vedral-arxiv} equipped with the classical Zhang--Shu slope limiter \cite{zhang2010b}. For a fair comparison, both schemes use roughly the same number of degrees of freedom per variable, with only minor differences related to the boundary discretization. 

The numerical results obtained with linear and quadratic finite elements are shown in Fig.~\ref{fig:dmr-pics}. Remarkably, the quality of the WENO-L approximation is comparable to that of the DG-WENO solution, and the results remain free of spurious oscillations. This observation demonstrates the competitiveness of continuous finite element approaches for demanding compressible flow problems and may encourage further developments in this direction. 


\begin{figure}[h!]
	\small
	
	\begin{minipage}[t]{\textwidth}
		\centering \blue{(a)  WENO-L, $\mathbb{Q}_1$ elements, $N_h=591745$, $u_h=[1.238,22.415]$}
		\vskip0.2cm
		
		\includegraphics[width=0.95\textwidth,trim=0 0 0 0,clip]{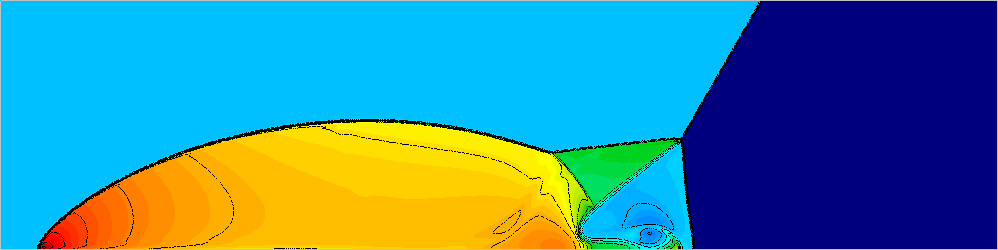}

	\end{minipage}%
	
	\vskip0.35cm
	
	\begin{minipage}{\textwidth}
		\centering \blue{(b) DG-WENO, $\mathbb{Q}_1$ elements, $N_h=589824$, $u_h=[1.400,22.116]$}
		
		\includegraphics[width=0.95\textwidth,trim=0 0 0 0,clip]{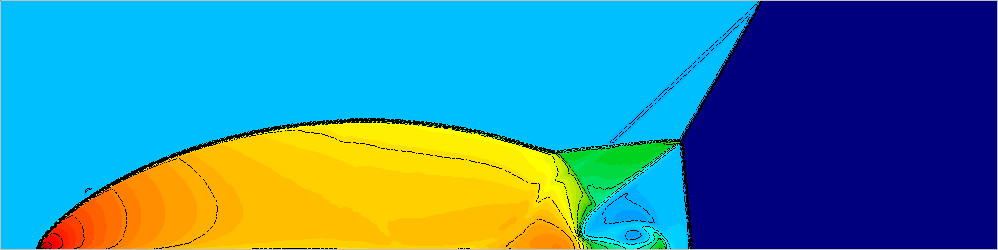}

	\end{minipage}%

	\vskip0.35cm
	
	\begin{minipage}{\textwidth}
		\centering \blue{(c) WENO-L, $\mathbb{Q}_2$ elements, $N_h=591745$, $u_h=[1.338,22.517]$}

		\includegraphics[width=0.95\textwidth,trim=0 0 0 0,clip]{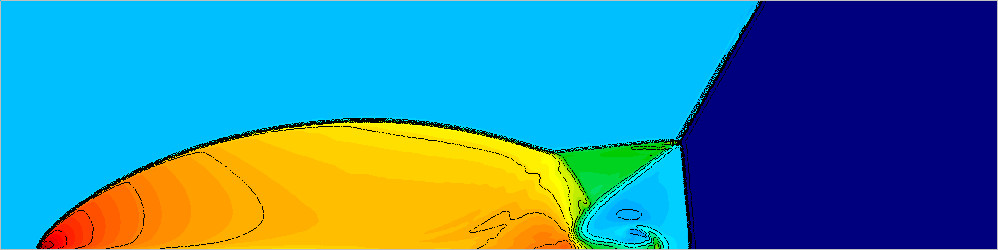}

	\end{minipage}%
	
	\caption{\blue{Density profiles for the double Mach reflection problem \cite{woodward1984} at $t=0.2$ calculated using a uniform mesh.}}
	
	\label{fig:dmr-pics}
	
\end{figure}
}

\section{Conclusions}
\label{sec:conclusions}

In this paper, we have shown that continuous Galerkin methods can be
limited in much the same way as their DG counterparts. Our definition of
the intermediate cell averages leads to a fully multidimensional generalization
of the bar states that represent averaged exact solutions of projected Riemann
problems. The new convex limiting framework enables us to ensure preservation
of invariant domains and high-order accuracy
without using matrix-based graph viscosity and decomposing
antidiffusive element contributions into subcell fluxes. Spurious oscillations
are avoided by using dissipative WENO stabilization instead of subcell flux
limiters with local bounds. The coarse-cell CFL condition for the flux-limited
version of our CG scheme is independent of the basis and weaker
than the subcell CFL condition that guarantees the IDP property for Bernstein
finite elements in the absence of flux limiting. In slope-limited
high-order extensions, the CFL condition that ensures nonlinear stability
of the underlying target scheme should be taken into account.

Even more significant 
efficiency gains can presumably be achieved for CG-LGL finite element
schemes, because the nonuniform distribution of nodes within the
cell makes the subcell CFL condition far more restrictive than that
for the uniform submesh of Bernstein nodes \cite{hajduk2025a}. Furthermore,
the imposition of global bounds on LGL nodal states is less
restrictive than the corresponding inequality constraints for Bernstein
nodes. Hence, it is less likely that convex limiting will degrade the rates of
convergence to smooth solutions. Adaptation of the proposed
algorithms to the LGL setting is straightforward. Regardless
of the basis, the performance of practical implementations
can be greatly improved by exploiting the matrix-free nature
of the presented methods and the data locality of high-order
finite element approximation \cite{andrej2024,bello2020,kronbichler2019}.

\section*{Acknowledgments}

The work of Dmitri Kuzmin was supported by the German Research Foundation (DFG) within the framework of the priority research program SPP 2410 under grant KU 1530/30-1. Hennes Hajduk participated in this project in his capacity as an associate member of  SPP 2410.

\bibliographystyle{abbrv}
\bibliography{references}

\red{
\section*{Appendix A: IDP limiter for the Euler equations}

Let $f=(f_1,\ldots,f_N)$ be an array of $N=2$ antidiffusive fluxes
$(f_{ee'},f_{e'e})$ or $N=|\Ne|$ element contributions $(f_j^e,\ j\in\Ne)$.
In both cases, the components of $f$ possess the zero sum property
$\sum_{i=1}^Nf_i=0$ and modify intermediate cell averages or nodal
states $\bar u_1,\ldots,\bar u_N\in\mathcal G$ as follows:
\[
\bar u_i^*=\bar u_i+\frac{\alpha f_i}{m_i}\in\mathcal G,
\qquad \alpha=\min_{1\le i\le N}\alpha_i.
\]
The scaling factors $m_i>0$ correspond to $|S_{ee'}|$ in
the flux limiter version and to $m_i^e$ in the slope limiter
version. The choice of the upper bounds $\alpha_i\in[0,1]$
for $\alpha\in [0,1]$ should guarantee that
$\bar u_i^*\in\mathcal G$ for $i=1,\ldots,N$. We emphasize that $i$
is not the global index of a node or element but the local
index of an antidiffusive component $f_i$ to be limited using
the correction factor $\alpha$.

For the Euler equations, $\bar u_i$ and $f_i$ are
vectors with $d+2$ components:
\[\bar u_i=(\rho_{i},(\rho \mathbf v)_{i},(\rho E)_{i})^\top,\qquad
f_{i}=( f_{i}^{\rho},\mathbf{f}_{i}^{\rho\mathbf{v}},
 f_{i}^{\rho E})^\top.
 \]
 Positivity preservation for the density $\bar\rho_i^*$
 can easily be enforced using
 \[
\alpha_i^\rho=
\begin{cases}
  -\frac{m_i\bar\rho_i}{f_i^\rho}
  & \mbox{if}\ m_i\bar\rho_i+f_i^\rho<0,\\
  1 & \mbox{otherwise}.
  \end{cases}
\]
The pressure $p(\bar u_i^*)$ and internal energy $e(\bar u_i^*)$
are nonnegative if $$\overline{(\rho E)}_{i}^*\ge
\frac{|\overline{(\rho\mathbf{v})}_{i}^{*}|^2}{2\bar\rho_{i}^*}.$$
The corresponding inequality constraint for $\alpha$
can be written as (cf. \cite{kuzmin2020,kuzmin2020c})
\[
P_{i}(\alpha)\ge Q_{i}=m_i\left(
\frac{|\overline{(\rho\mathbf{v})}_i|^2}{2}-\bar\rho_i
\overline{(\rho E)}_i\right),
\]
where
\begin{align*}
  P_{i}(\alpha)&=\alpha \left(\bar\rho_i f_{i}^{\rho E}
  +\overline{(\rho E)}_{i} f_{i}^{\rho}-
  \overline{(\rho\mathbf{v})}_i\cdot{\mathbf{f}}_{i}^{\rho\mathbf{v}}
\right)
+\frac{\alpha^2}{m_i}\left( f _{i}^{\rho E} f_{i}^{\rho}
-\frac{|{\mathbf{f}}_{i}^{\rho\mathbf{v}}|^2}{2}\right).
\end{align*}
Note that $Q_{i}\le 0$ because
$\bar u_i\in\mathcal G$, where $\mathcal G\subseteq\{u\,:\,
\rho(u)>0,\ e(u)>0\}$.

We can now derive a bound $\alpha_i^p$ for $\alpha$ following
\cite{kuzmin2020,kuzmin2020c} and \cite[Examples 3.39, 4.16, 5.14]{kuzmin2023}.
For any $\alpha\in[0,1]$, we have 
$P_{i}(\alpha)\ge \alpha R_{i}$, where
 \begin{align*}
    R_{i}&=\bar\rho_i f_{i}^{\rho E}
  +\overline{(\rho E)}_{i} f_{i}^{\rho}-
   \overline{(\rho\mathbf{v})}_i\cdot{\mathbf{f}}_{i}^{\rho\mathbf{v}}
 +\frac{1}{m_i}\min\Big\{0,
 f_{i}^{\rho E} f_{i}^{\rho}
-\frac{|{\mathbf{f}}_{i}^{\rho\mathbf{v}}|^2}{2}\Big\}.
 \end{align*}
 To ensure that $\alpha R_{i}\ge Q_{i}$ for 
$i\in\{1,\ldots,N\}$, the pressure limiter must use
\[
\alpha_i^p=
\begin{cases}
  \frac{Q_{i}}{R_{i}}
  & \mbox{if}\ R_{i}<Q_{i},\\
  1 & \mbox{otherwise}.
  \end{cases}
\]
We remark that this formula is very similar to the
one derived in \cite{abgrall-arxiv,wissocq2025}
using the general framework of geometric quasi-linearization
\cite{wu2023}. Both approaches replace the nonlinear pressure
constraint
by linear sufficient conditions.
}

\end{document}